\newtheorem{theorem}{Theorem}[section]
\newtheorem*{theorem*}{Theorem}
\newtheorem{corollary}[theorem]{Corollary}
\newtheorem{lemma}[theorem]{Lemma}
\newtheorem{question}[theorem]{Question}
\newtheorem{proposition}[theorem]{Proposition}
\theoremstyle{definition}
\newtheorem{definition}[theorem]{Definition}
\newtheorem{remark}[theorem]{Remark}
\theoremstyle{definition}
\newtheorem{example}[theorem]{Example}
\newcommand{\Z}{\mathbb{Z}}
\newcommand{\Q}{\mathbb{Q}}
\def\R{\mathbb{R}}
\DeclareMathOperator{\conv}{conv}
\DeclareMathOperator{\adj}{adj}
\DeclareMathOperator{\rad}{rad}
\DeclareMathOperator{\sspan}{span}
\def\NN{\mathbb{N}}
\def\A{\mathcal{A}}
\def\B{\mathcal{B}}
\def\NN{\mathbb{N}}
\def\cM{\mathcal{M}}
\def\Q{\mathbb{Q}}
\def\ie{{i.e.}}  
\def\GL{\textrm{GL}(d, \mathbb{Z})}
\def\GLtwo{\textrm{GL}(2, \mathbb{Z})}
\def\Id{\textrm{Id}}
\DeclareMathOperator{\Aut}{Aut}
\DeclareMathOperator{\supp}{supp}
\DeclareMathOperator{\End}{End}
\DeclareMathOperator{\Homeo}{Homeo}
\newcounter{sigmavariable}
\newcounter{Lvariable}
\DeclareMathOperator{\trace}{trace}
\DeclareMathOperator{\Cent}{Cent}
\newcommand\reallywidehat[1]{%
\savestack{\tmpbox}{\stretchto{%
  \scaleto{%
    \scalerel*[\widthof{\ensuremath{#1}}]{\kern-.6pt\bigwedge\kern-.6pt}%
    {\rule[-\textheight/2]{1ex}{\textheight}}
  }{\textheight}%
}{0.5ex}}%
\stackon[1pt]{#1}{\tmpbox}%
}
\definecolor{zzttqq}{rgb}{0.6,0.2,0.}
\title[Normalizer of odometers and substitutive subshifts]{Large normalizers of $\Z^{d}$-odometer systems and realization on substitutive subshifts}
\author{Christopher Cabezas} 
\address{Department of Mathematics, Université de Liège, All\'ee de la découverte 12 (B37), 4000 Liège, Belgium}
\email{ccabezas@uliege.be}
\author{Samuel Petite} 
\address{LAMFA,  CNRS, UMR 7352, Universit\'e de Picardie Jules Verne, 80 000 Amiens, France}
\email{samuel.petite@u-picardie.fr}
\thanks{The authors acknowledge the financial support of  ANR project IZES ANR-22-CE40-0011 and ECOS projet ACEDic C21E04.}
\subjclass[2020]{Primary: 37B10; Secondary: 52C23, 37B52, 20H15, 20E18}
\keywords{multidimensional substitutive subshift, odometer, normalizer, automorphism} 
\begin{document}

\begin{abstract}
For a $\Z^{d}$-topological dynamical system $(X, T, \Z^d)$, an \emph{isomomorphism} is a self-homeomorphism  $\phi : X\to X$ such that for some matrix $M\in GL(d,\Z)$ and any ${\bm n}\in \Z^{d}$, $\phi\circ T^{{\bm n}}=T^{M{\bm n}}\circ \phi$, where $T^{\bm n}$  denote the self-homeomorphism of $X$ given by the action of ${\bm n}\in {\mathbb Z}^d$. The collection of all the isomorphisms forms a group that is the normalizer of the set of transformations $T^{\bm n}$.
In the one-dimensional case, isomorphisms correspond to the notion of \emph{flip conjugacy} of dynamical systems and by this fact are also called \emph{reversing symmetries}.

Isomorphisms are not well understood even for classical systems. We present a description of them for odometers and more precisely for constant-base $\Z^{2}$-odometers, which is surprisingly not simple. We deduce a complete description of the isomorphisms of some minimal $\Z^{d}$-substitutive subshifts. This enables us to provide the first known example of a minimal zero-entropy subshift with the largest possible normalizer group. 
\end{abstract}
	\maketitle
	
	\section{Introduction}
	
This article concerns $\Z^d$-odometers  and their  symbolic extensions. More specifically, for such dynamical system $(X, T, \Z^d)$, we study their \emph{isomorphisms}, which are self-homeomorphisms $\phi : X\to X$ such that for some linear transformation $M\in GL(d,\Z)$ and any ${\bm n}\in \Z^{d}$, $\phi\circ T^{{\bm n}}=T^{M{\bm n}}\circ \phi$, where $T^{\bm n}$ denotes the self-homeomorphism of $X$ given by the $\Z^d$-action $T$.

Isomorphisms associated with the linear transformation $M= \Id_{\R^{d}}$ are, in essence, nothing more than self-conjugacies of the system. They are commonly referred to as \emph{automorphisms} of dynamical systems. Therefore, an isomorphism can be thought of as a self-conjugacy up to a $\GL$-transformation (see for example \cite{baake2018reversing,cabezas2023homomorphisms}).
In the one-dimensional case ($d=1$), isomorphisms correspond to the notion of \emph{flip conjugacy} in dynamical systems \cite{bezuglyi2008fullgroups}. Because of this, they are also known as \emph{reversing symmetries} (see \cite{goodson1999inverse, baake2006structure}). 
The automorphisms can be algebraically defined as elements of the centralizer of the group $\left\langle T \right \rangle$, considered as a subgroup of all self-homeomorphisms $\Homeo(X)$ of $X$. From this algebraic perspective, isomorphisms can be seen as elements of the normalizer group of $\left\langle T \right \rangle$ within the group $\Homeo(X)$. Consequently, the automorphism group is a normal subgroup of the normalizer. The quotient is isomorphic to a linear subgroup of $\GL$, called the \emph{linear representation  group}, achieved through the identification of an isomorphism $\phi$ with its matrix $M$. The automorphism group is never trivial (it always contains the transformations $T^{\bm n}$, $\bm n \in \Z^d$), but generally, the existence of an isomorphism for a given $M\in GL(d,\Z)$ remains an open problem, which is significant in the context of higher-rank actions. 

There are few general results on these groups so natural questions on the algebraic properties of the group of isomorphisms (is it finite? is it amenable? what are the subgroups? etc.), on their relations with the dynamics, or on a description of their actions, are wide open. In this article, we explore these issues using classic and widely studied examples of odometers and substitutive Toeplitz subshifts.

Even though the dynamics of an odometer is one of the best understood, a partial description of their isomorphisms has only been initiated in \cite{giordano2019zdodometer} for $d=2$ and pursued for higher rank in \cite{merenkov2022odometers,sabitova2022,sabitova2022number}. These works were essentially motivated by the fact that isomorphisms define specific transformations of  orbit equivalence. We complete these studies, but  mainly for dimension $d=2$, by describing explicitly the group structure of their  normalizers.  This leads to a complete classification  of them (Theorem \ref{GeneralTheoremDifferentCasesNormalizer}) for constant-base odometer systems, i.e., whose base is given by the iteration of the same expansive matrix. The classification depends on arithmetical properties of the expansive matrix. In addition, we provide computable arithmetic conditions for deciding whether a given matrix arises as an isomorphism of a constant-base odometer. Our techniques use elementary arithmetic, but extending our results to higher ranks would require sophisticated notions of number theory.

Interest in the  study of the normalizer group for multidimensional subshifts has  increased in recent  years. These objects represent geometrical and combinatorial symmetries, such as rotations and reflections, that a particular subshift possesses. Similar studies were initiated in the framework of tilings and Delone sets (see e.g., \cite{Robinson1996}), where the existence of some point set symmetries is at the heart of the discovery of aperiodicity of quasicrystals \cite{shechtman1984metallic}. For instance, the Penrose tiling serves as a model with ten-fold symmetry \cite{Penrose1979}. However, still few results and no systematic description of the analogous normalizer group for $\R^d$-flows exist. For a self-similar tiling, its linear representation group  is known to be countable and isomorphic to its mapping class group \cite{Kwapisz2011}.   
Recently, classical examples of multidimensional subshifts have been considered to study their normalizers \cite{baake2021number,baake2018reversing,  bustos2021admissible,cabezas2023homomorphisms}. These works have yielded sufficient conditions (of a combinatorial nature in \cite{bustos2021admissible} and  a geometrical nature in \cite{cabezas2023homomorphisms}) on a substitutive subshift  so that the quotient of the normalizer by the automorphism group is finite, and each group is virtually $\Z^d$. 
The article \cite{baake2021number} concerns different classes of non-minimal subshifts with positive entropy having a normalizer not virtually isomorphic to its automorphism group.

However, the question remains open whether an infinite linear representation group is also possible for minimal, deterministic (zero-entropy) subshifts. In this article, we provide a positive answer to this problem by completely describing the normalizer group of a family of minimal subshifts (Theorem \ref{prop:MainresultSection4}). 
The examples are substitutive  Toeplitz subshifts that can be presented in an effective  way, in the algorithmic sense. These subshifts are highly deterministic meaning they have zero entropy. More precisely, the rate of growth of their complexity is polynomial of degree equal to the dimension $d$.  In particular, we exhibit examples  where the linear representation group is the largest one, i.e., equal to $\GL$. Together with the  results of \cite{baake2021number}, this shows that the normalizer is not restricted by the complexity. More surprisingly, low complexity is not enough to ensure the amenability of the normalizer group in dimension $d>1$. This phenomenon differs from what  happens in dimension  one  where the amenability of  the automorphism  group (of finite-index in the normalizer group) is shown for a large class of zero-entropy subshifts (see \cite{CovenQuasYassawi, CyrKra2015, donoso2016lowcomplexity} for linear complexity subshifts and  \cite{CyrKra2016, CyrKra2020} for subexpontential complexity  subshifts) and any Toeplitz subshift \cite{DonosoToeplitz2017}.

This article is organized as follows: Section \ref{sec:Basics} is devoted to the background on topological and symbolic dynamics. In particular,  relations between the normalizer of a system with the one of its maximal equicontinuous factor are exhibited. The next section mostly concerned with the study of  the normalizers of odometers in order to describe them (Theorem \ref{GeneralTheoremDifferentCasesNormalizer}). Finally, we construct examples of  multidimensional subshifts with an odometer as their maximal equicontinuous factors in Section \ref{sec:NormalizerSubshiftEx}. We characterize their infinite linear representation groups (Theorem \ref{prop:MainresultSection4}) by using our study of normalizers of odometers and the relations with their extensions.

	\section{Definitions and basic properties}\label{sec:Basics}
	
	\subsection{Topological dynamical systems}\label{SectionTopologicalDynamicalSystem}

We recall that a \emph{topological dynamical system}  $(X, T, \Z^d)$ is given by a continuous left-action $ T \colon \Z^d \times X \to X$  on a compact metric space $X$ equipped with a distance. This provides a family of  self-homeomorphisms  of $X$:   $\{ T^{\bm n}\colon {\bm n} \in \Z^d\}$, also denoted by $\langle T \rangle $, such that $T^{\bm m}\circ T^{\bm n} = T^{\bm m+\bm n}$ for any ${\bm m,\bm n} \in \Z^d$. In particular, the homeomorphisms $T^{\bm n}$ commute with each other.  
The  \emph{orbit} of a point $x \in X$  is the set $\mathcal{O}(x,T)=\{T^{\bm n}(x)\colon {\bm n}\in \Z^d\}$. 
We will be mainly concerned by topological dynamical systems that are {\em minimal}, i.e., where any orbit is dense.  For minimal systems, there is no topological way to separate the orbits. 

An important type of topological dynamical systems are the equicontinuous ones. A topological dynamical system $(X,T,\Z^{d})$ is said to be \emph{equicontinuous} if the set of maps $\{T^{{\bm n}}\colon {\bm n}\in \Z^{d}\}$ forms an equicontinuous family of homeomorphisms. The equicontinuous systems are, in some sense, the simplest dynamical systems to describe.

In the following, we define the endomorphisms and isomorphisms of a  topological dynamical system, which are the central objects of study of this article. Isomorphisms represent internal symmetries of a given system that do not commute with the action, such as rotations and reflections. We  follow the notations of 	\cite{cabezas2023homomorphisms, LabbeRao2021, Labbe2021}.

	\begin{definition}
Let $(X,T,\Z^{d})$, $(Y,S,\Z^{d})$ be two topological dynamical systems and ${M\in \GL}$. 
An  $M$-\emph{epimorphism}  is a continuous surjective map ${\phi: X\to Y}$ that for any ${\bm n}\in \Z^{d}$ satisfies $\phi\circ T^{{\bm n}}= T^{M{\bm n}}\circ \phi$. 
When $(X,T,\Z^{d})=(Y,S,\Z^{d})$, it is called an $M$-\emph{endomorphism}. Moreover, if $\phi$ is invertible, it is called an $M$-\emph{isomorphism}.

We simply call a $\GL$-\emph{endomorphism} (or $\GL$-\emph{isomorphism}), any $M$-endomorphism (resp. isomorphism) for some $M \in \GL$. 	
	\end{definition}

In other terms, the $\GL$-isomorphisms are conjugacies of $\Z^{d}$-actions, up to a $\GL$-transformation, i.e., an orbit equivalence with a constant orbit cocycle. 
In the special case where $M$ is the identity matrix $\Id_{\R^{d}}$, the $\Id_{\R^{d}}$-endomorphisms and $\Id_{\R^{d}}$-isomorphisms are called \emph{endomorphisms} (or \emph{self-factor maps}) and \emph{automorphisms} (or \emph{self-conjugacies}), respectively.

When the action is \emph{aperiodic}, \ie, the stabilizer of any point is trivial or $T^{\bm n} x= x $ only for ${\bm n} ={\bm 0}$, the matrix $M$ associated with a $\GL$-endomorphism $\Phi$ is unique: $\Phi$ is an $M_1$- and $M_2$-endomorphism if and only if $M_1=M_2$. 
In the following, we fix different notations that we will use throughout this article:
\begin{itemize}
		\item $N_{M}(X,T)$, with $M \in \GL$, denotes the set of all $M$-endomorphisms of $(X,T)$; 
		
		\item $N(X,T)$ denotes the set of all the $\GL$-endomorphisms of the dynamical system $(X,T, \Z^d)$, \ie 
		$$N(X,T)=\bigcup\limits_{M\in \GL}N_M(X,T).$$
	  The set of $\GL$-isomorphisms is denoted by $N^*(X,T)$, i.e.,
	  $$N^*(X,T) = \{\phi \in N(X,T)\mid \phi\ \text{is a}\ \GL\text{-isomorphism}\}.$$
	  
	  In algebraic terms, this set is a group and, when the action is aperiodic, corresponds to the normalizer of the group action $\left\langle T\right\rangle$ in the group of self-homeomorphisms $\Homeo(X)$ of $X$, that is, the set of self-homeomorphisms $\phi \colon X \to X$ such that $\phi \circ T^{\bm n} \circ \phi^{-1} \in \{T^{\bm m}: {\bm m} \in \Z^d\}$ for any ${\bm n}\in \Z^d$;   
		
		\item $\End(X,T)$ and $\Aut(X,T)$ denotes respectively the set of all endomorphisms and automorphisms of $(X, T, \Z^d)$.
		
		\item We define the \emph{linear representation semigroup} $\vec{N}(X,T)$ as
		$$\vec{N}(X,T)=\{M\in \GL\mid N_{M}(X,T)\neq \emptyset\}.$$
\end{itemize}

Notice that for $\phi\in N_{M_{1}}(X,T )$ and $\psi\in N_{M_{2}}(X,T)$, the composition $\phi\circ \psi$ belongs to $N_{M_{1}M_{2}}(X,T)$. Moreover, if $\phi$ is an $M$-isomorphism, then its inverse is an $M^{-1}$-isomorphism, so the sets $N_{M}(X,T)$ are not semigroups (except when $M$ is the identity matrix).		

Concerning the linear representation semigroup $\vec{N}(X,T)$, it is direct to check that the isomorphism class of $\vec{N}(X,T)$ is invariant under conjugacy.
However, it is not necessarily a group, since the existence of an $M$-endomorphism associated with a matrix $M\in \GL$ does not necessarily imply the existence of an $M^{-1}$-endomorphism. 
Nevertheless, we have the following result when a dynamical system is coalescent. Recall that a topological dynamical system $(X,T,\Z^{d})$ is \emph{coalescent} when any endomorphism of $(X,T,\Z^{d})$ is invertible \cite[Chapter 5]{auslander1988minimal}.
	
\begin{proposition}\label{CoalescenceOfHommorphismsForCoalescentSystems}
		Let $(X,T,\Z^{d})$ be a coalescent system. If the linear representation semigroup $\vec{N}(X,T)$ is a group, then any $\GL$-endomorphism in $N(X,T)$ is invertible, \ie, $N(X,T) = N^{*}(X,T)$. 
	\end{proposition}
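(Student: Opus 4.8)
The plan is to use the assumed group structure of $\vec{N}(X,T)$ to manufacture, for an arbitrary $\GL$-endomorphism $\phi$, a partner map $\psi$ such that both $\phi\circ\psi$ and $\psi\circ\phi$ are genuine endomorphisms (i.e. $\Id$-endomorphisms), and then to push each of these through the coalescence hypothesis.

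First I would fix $\phi\in N(X,T)$, say $\phi\in N_{M}(X,T)$ with $M\in\vec{N}(X,T)$. Since $\vec{N}(X,T)$ is a group, $M^{-1}\in\vec{N}(X,T)$, so I may choose some $\psi\in N_{M^{-1}}(X,T)$; note that $\psi$ is in particular a continuous surjection of $X$. By the composition rule recalled just before the statement, $\phi\circ\psi\in N_{MM^{-1}}(X,T)=N_{\Id}(X,T)=\End(X,T)$, and symmetrically $\psi\circ\phi\in N_{\Id}(X,T)=\End(X,T)$.

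Next I would invoke coalescence twice: since $(X,T,\Z^{d})$ is coalescent, each of the endomorphisms $\phi\circ\psi$ and $\psi\circ\phi$ is invertible, hence in particular bijective. From $\phi\circ\psi$ being bijective I deduce that $\phi$ is surjective, and from $\psi\circ\phi$ being bijective that $\phi$ is injective. Thus $\phi$ is a continuous bijection of the compact metric space $X$ onto itself, hence a homeomorphism. A routine verification then shows $\phi^{-1}\circ T^{M\bm n}=T^{\bm n}\circ\phi^{-1}$ for all $\bm n\in\Z^{d}$, so $\phi^{-1}\in N_{M^{-1}}(X,T)$ and $\phi\in N^{*}(X,T)$. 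Since $\phi$ was arbitrary and the inclusion $N^{*}(X,T)\subseteq N(X,T)$ is automatic, this yields $N(X,T)=N^{*}(X,T)$.

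I do not expect a genuine obstacle here; the argument is essentially a diagram chase. The points that deserve a word of care are: that an $\Id$-endomorphism is precisely an element of $\End(X,T)$, so that coalescence applies to $\phi\circ\psi$ and $\psi\circ\phi$; that coalescence must be used on \emph{both} orders of composition, one invocation giving surjectivity and the other injectivity of $\phi$; and that a continuous bijection of a compact Hausdorff space is automatically bicontinuous, which is what allows ``invertible endomorphism'' to be read as ``automorphism''.
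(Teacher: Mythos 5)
Your proof is correct and follows essentially the same route as the paper: use the group hypothesis to pick some $\psi\in N_{M^{-1}}(X,T)$, compose with $\phi$, and apply coalescence to the resulting $\Id$-endomorphism. The only (harmless) difference is that the paper invokes coalescence once, on $\phi\circ\psi$ alone, since surjectivity of $\psi$ is already part of the definition of an epimorphism, whereas you symmetrically use both $\phi\circ\psi$ and $\psi\circ\phi$.
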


Equicontinuous systems are examples of coalescent systems \cite[Chapter 5]{auslander1988minimal}. 
 	
\begin{proof}
		Let $\phi,\psi$ be respectively an $M$- and $M^{-1}$-endomorphism onto $(X,T,\Z^{d})$. Then $\phi\circ\psi$ is an endomorphism onto $(X,T,\Z^{d})$. Since $(X,T,\Z^{d})$ is coalescent, then $\phi\circ \psi$ is invertible. Hence $\psi$ is injective, and thus invertible. It follows then that $\phi$ is also invertible.
	\end{proof}
	
A particular case is when the linear representation semigroup of a coalescent system is finite. In this case, as a finite semigroup,  it is a group. So any $\GL$-endomorphism is invertible.

The groups $\left\langle T\right\rangle$ and $\Aut(X,T)$ are normal subgroups of $N^{*}(X,T)$ (the group of isomorphisms). 
More precisely, for aperiodic systems the following exact sequence holds:
	\begin{align}\label{ExactSequenceForNormalizer}
		1 & \to &     \Aut(X,T) \quad & \to \quad  N^{*}(X,T) & \xrightarrow{j}  \quad  \vec{N^{*}}(X,T)\quad   \to  & \quad  1,
	\end{align}
where all the maps are the canonical injections and $j(\phi)= M$ whenever $\phi \in N_M(X,T)$.  

A \emph{factor map} $\pi: (X,T,\Z^{d})\to (Y,S,\Z^{d})$ between two topological dynamical systems $(X, T, \Z^d)$ and $(Y, S,\Z^{d})$ is a continuous onto map commuting with the action, \ie, $\pi \circ T^{\bm n}=S^{\bm n} \circ \pi $ for any ${\bm n} \in \Z^d$. The system $(Y, S,\Z^{d})$ is said to be a \emph{factor} of $(X, T, \Z^d)$, and $(X, T, \Z^d)$ is an \emph{extension} of $(Y, S,\Z^{d})$.
If $|\pi^{-1}(\{y\})|\leq K<\infty$ for all $y\in Y$, we say that $\pi$ is \emph{finite-to-1}.
Sometimes there exists a $G_{\delta}$-dense subset $Y_{0}\subseteq Y$ where any $y\in Y_{0}$ satisfies $|\pi^{-1}(\{y\})|=1$. In this case, the factor map $\pi$ is said to be \emph{almost} \emph{1-to-1}. We recall that when the system $(Y,S, \Z^d)$ is minimal, the existence of one point with only one preimage is enough to ensure that the map is almost 1-to-1. 

For every topological dynamical system, there exists at least one equicontinuous factor, which is the system given by one point. Furthermore, any topological dynamical system admits a \emph{maximal equicontinuous factor}, \ie, a factor $\pi_{eq}:(X,T,\Z^{d})\to(X_{eq},T_{eq},\Z^{d})$ where $(X_{eq},T_{eq},\Z^{d})$ is an equicontinuous system, satisfying the following universal property: for any other equicontinuous factor $\pi:(X,T,\Z^{d})\to (Y,S,\Z^{d})$ there exists a factor map $\phi:(X_{eq},T_{eq},\Z^{d})\to (Y,S,\Z^{d})$ such that $\pi=\phi\circ \pi_{eq}$ \cite[Theorem 1, Chapter 9]{auslander1988minimal}. 
Also, in the particular case where $\pi:(X,T,\Z^{d})\to (Y,S,\Z^{d})$ is an almost 1-to-1 factor on an equicontinuous system $(Y,S,\Z^{d})$, this factor is the maximal equicontinuous factor of $(X,T,\Z^{d})$. As typical examples of this case, are the odometer systems (see next section) that are almost 1-to-1 factors of particular symbolic systems \cite{cortez2006toeplitz,cortez2008godometers,downarowicz2005survey, AbdalaouiLemanczyckdelaRue}.
	
A factor map $\pi:(X,T,\Z^{d})\to(Y,S,\Z^{d})$ is \emph{compatible} if any endomorphism $\phi\in \End(X,T)$ preserves the $\pi$-fibers, \ie, $\pi(\phi(x))= \pi(\phi(y))$ for any $x,y\in X$, such that $\pi(x)= \pi(y)$.
In the same spirit, we say that a factor $\pi$ is \emph{compatible with $\GL$-endomorphisms} if for any $\GL$-endomorphism $\phi\in N(X,T)$, $\pi(\phi(x))= \pi(\phi(y))$ for any $x,y\in X$, such that $\pi(x)= \pi(y)$.

The compatibility property allow us to relate $\GL$-endomorphisms between factor systems. The next lemma follows the ideas from \cite[Theorem 3.3]{CovenQuasYassawi} but in a larger context.

	\begin{lemma}\label{CompatibilityPropertyFactors}
		Let $(X,T,\Z^{d})$, $(Y,S,\Z^{d})$ be two minimal systems, such that $\pi:(X,T,\Z^{d})\to (Y,S,\Z^{d})$ is a compatible factor. Then, there is a semigroup homomorphism $\hat{\pi}:\End(X,T)\to\End(Y,S)$ such that
		
		\begin{enumerate}[label=\arabic*.,ref=\arabic*.]
			\item\label{CompatibilityPropertyEnumerate1} $\hat{\pi}(\phi)(\pi(x))=\pi(\phi(x))$ for all $\phi\in \End(X,T)$ and $x\in X$.
			\item\label{CompatibilityPropertyEnumerate2} $\hat{\pi}(\Aut(X,T))\leqslant  \Aut(Y,S)$.
			\item\label{CompatibilityPropertyEnumerate3} For all $\psi\in \End(Y,S)$, $|\hat{\pi}^{-1}(\{\psi\})|\leq \min\limits_{y\in Y}|\pi^{-1}(y)|$.
		\end{enumerate}
		
		Moreover, if $\pi$ is compatible with $\GL$-endomorphisms, there is an extension of $\hat{\pi}:N(X,T)\to N(Y,S)$ defined as in \cref{CompatibilityPropertyEnumerate1} for all $\phi\in N(X,T)$, satisfying the following properties
	\begin{enumerate}[label=\roman*),ref=\roman*]
	    \item\label{CompatibilityPropertyEnumeratei} $\hat{\pi}(N_{M}(X,T))\subseteq N_{M}(Y,S)$, for any $M\in GL(d,\Z)$.
	    \item\label{CompatibilityPropertyEnumerateii} For any $M \in \GL$, the map $\hat{\pi}:N_{M}(X,T)\to N_{M}(Y,S)$ is at most $\min\limits_{y\in Y}|\pi^{-1}(\{y\})|$-to-1.
	    \item\label{CompatibilityPropertyEnumerateiii} For any $\phi\in  \hat{\pi}^{-1} (N^*(Y,S))$, the cardinality of the $\pi$-fiber is non decreasing under the $\GL$-isomorphism $\hat{\pi}(\phi)^{-1}$. In other terms, for any integer $n \geq 1$, the map $\hat{\pi}(\phi)$ satisfies
$$\{y\in Y\colon |\pi^{-1}(\{y\})|\ge n\}\subset \hat{\pi}(\phi)\left(\{y\in Y\colon |\pi^{-1}(\{y\})|\ge n\}\right).$$
	\end{enumerate}
\end{lemma}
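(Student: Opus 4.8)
The plan is to construct $\hat\pi$ directly from the compatibility hypothesis and then verify each listed property in turn, leaning on minimality of both systems at the key points.

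\medskip

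\noindent\textbf{Construction of $\hat\pi$ and property \ref{CompatibilityPropertyEnumerate1}.} Fix $\phi\in\End(X,T)$. Compatibility says $\phi$ sends $\pi$-fibers into $\pi$-fibers, so the formula $\psi(\pi(x)):=\pi(\phi(x))$ is well defined as a map $Y\to Y$; call it $\hat\pi(\phi)$. It is surjective because $\pi$ and $\phi$ are, and it is continuous: since $\pi$ is a closed surjection between compact metric spaces (hence a quotient map), continuity of $\hat\pi(\phi)\circ\pi=\pi\circ\phi$ forces continuity of $\hat\pi(\phi)$. Finally $\hat\pi(\phi)\circ S^{\bm n}\circ\pi=\hat\pi(\phi)\circ\pi\circ T^{\bm n}=\pi\circ\phi\circ T^{\bm n}=\pi\circ T^{\bm n}\circ\phi=S^{\bm n}\circ\pi\circ\phi=S^{\bm n}\circ\hat\pi(\phi)\circ\pi$, and surjectivity of $\pi$ gives $\hat\pi(\phi)\circ S^{\bm n}=S^{\bm n}\circ\hat\pi(\phi)$; so $\hat\pi(\phi)\in\End(Y,S)$. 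That $\hat\pi$ is a semigroup homomorphism, i.e. $\hat\pi(\phi_1\circ\phi_2)=\hat\pi(\phi_1)\circ\hat\pi(\phi_2)$, is immediate from the defining identity applied twice, and $\hat\pi(\id_X)=\id_Y$. This proves \ref{CompatibilityPropertyEnumerate1}, and \ref{CompatibilityPropertyEnumerate2} follows because a homomorphism of monoids sends invertible elements to invertible elements, with $\hat\pi(\phi^{-1})=\hat\pi(\phi)^{-1}$.

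\medskip

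\noindent\textbf{Property \ref{CompatibilityPropertyEnumerate3} (the fiber bound).} This is the step I expect to be the crux. Suppose $\hat\pi(\phi_1)=\hat\pi(\phi_2)=\psi$; I want to bound the number of such $\phi_i$ by $k:=\min_{y\in Y}|\pi^{-1}(y)|$. Pick $y_0$ with $|\pi^{-1}(y_0)|=k$ and write $\pi^{-1}(y_0)=\{x_1,\dots,x_k\}$. Every $\phi$ with $\hat\pi(\phi)=\psi$ maps $\pi^{-1}(y_0)$ into $\pi^{-1}(\psi(y_0))$, and by minimality and a standard equicontinuity-free argument $|\pi^{-1}(\psi(y_0))|\le k$ as well (here one uses that the map $y\mapsto|\pi^{-1}(y)|$ is lower semicontinuous and $S$-invariant on a minimal system, so it attains its minimum on a dense invariant set; combined with the factor-map structure this pins down the cardinalities along the $\psi$-image). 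Thus $\phi|_{\pi^{-1}(y_0)}$ is determined by a function $\{x_1,\dots,x_k\}\to\pi^{-1}(\psi(y_0))$, a set of size at most $k^k$ a priori. To get the sharp bound $k$ I would argue that $\phi$ is already determined by the single value $\phi(x_1)$: minimality of $(X,T)$ means $\mathcal O(x_1,T)$ is dense, so $\phi$ is the unique continuous extension of ${\bm n}\mapsto\phi(T^{\bm n}x_1)=T^{\bm n}\phi(x_1)$; hence the map $\phi\mapsto\phi(x_1)\in\pi^{-1}(\psi(y_0))$ is injective on $\hat\pi^{-1}(\{\psi\})$, giving $|\hat\pi^{-1}(\{\psi\})|\le|\pi^{-1}(\psi(y_0))|\le k$.

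\medskip

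\noindent\textbf{The $\GL$-equivariant extension and properties \ref{CompatibilityPropertyEnumeratei}--\ref{CompatibilityPropertyEnumerateiii}.} Under the stronger hypothesis that $\pi$ is compatible with $\GL$-endomorphisms, the identical construction applied to $\phi\in N_M(X,T)$ produces $\hat\pi(\phi)$, and repeating the equivariance computation with $T^{\bm n},S^{\bm n}$ replaced by $T^{M{\bm n}},S^{M{\bm n}}$ on the right yields $\hat\pi(\phi)\circ S^{\bm n}=S^{M{\bm n}}\circ\hat\pi(\phi)$, i.e. $\hat\pi(\phi)\in N_M(Y,S)$; this is \ref{CompatibilityPropertyEnumeratei}, and \ref{CompatibilityPropertyEnumerateii} is exactly the argument of \ref{CompatibilityPropertyEnumerate3} carried out verbatim inside $N_M$ (nothing in that argument used $M=\Id$). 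For \ref{CompatibilityPropertyEnumerateiii}: let $\phi\in N(X,T)$ with $\hat\pi(\phi)=:\psi$ invertible. Fix $n\ge1$ and $y$ with $|\pi^{-1}(y)|\ge n$, say $z_1,\dots,z_n\in\pi^{-1}(y)$ distinct. Because $\psi$ is a bijection of $Y$ there is a unique $y'$ with $\psi(y')=y$; I claim $|\pi^{-1}(y')|\ge n$, which gives $y=\psi(y')\in\psi(\{y'':|\pi^{-1}(y'')|\ge n\})$ as required. To see the claim, note $\phi$ maps $\pi^{-1}(y')$ into $\pi^{-1}(\psi(y'))=\pi^{-1}(y)$ by \ref{CompatibilityPropertyEnumerate1}; if $\phi^{-1}$ denotes the inverse $\GL$-isomorphism then $\hat\pi(\phi^{-1})=\psi^{-1}$ maps $y$ to $y'$ and $\phi^{-1}$ maps $\pi^{-1}(y)$ into $\pi^{-1}(y')$. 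Since $\phi^{-1}$ is injective, the $n$ distinct points $\phi^{-1}(z_1),\dots,\phi^{-1}(z_n)$ lie in $\pi^{-1}(y')$, so $|\pi^{-1}(y')|\ge n$. This establishes the displayed inclusion and completes the proof. The only genuinely delicate point, and the one I would write out most carefully, is the reduction in \ref{CompatibilityPropertyEnumerate3}/\ref{CompatibilityPropertyEnumerateii} from "$\phi$ determined on a fiber" to "$|\hat\pi^{-1}(\{\psi\})|\le k$", where minimality of $(X,T)$ and the lower semicontinuity of the fiber-cardinality function on the minimal system $(Y,S)$ do all the work.
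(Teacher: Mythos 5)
Your construction of $\hat{\pi}$ and the verification of items 1, 2 and (i) match the paper and are fine, but there is a genuine gap in your proof of item 3 (hence also of item (ii), which you run ``verbatim''). You bound $|\hat{\pi}^{-1}(\{\psi\})|$ by $|\pi^{-1}(\psi(y_0))|$ and then assert $|\pi^{-1}(\psi(y_0))|\le k:=\min_{y\in Y}|\pi^{-1}(y)|$, justified by lower semicontinuity of $y\mapsto|\pi^{-1}(y)|$. That function is \emph{not} lower semicontinuous for factor maps: it fails exactly in the almost 1-to-1 situations this lemma is built for (e.g.\ $\pi:X_{\sigma_L}\to\overleftarrow{\Z^{d}}_{(L^{n})}$, where fibers of cardinality $1$ accumulate on fibers of cardinality $|\A|$), and knowing that the set $A=\{y:|\pi^{-1}(y)|=k\}$ is invariant and dense does not force $\psi(y_0)\in A$ for your chosen $y_0$, nor guarantee a $y_0\in A\cap\psi^{-1}(A)$ (two dense sets need not meet). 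The repair is the paper's choice of base point, which goes in the opposite direction: fix $y_0$ with $|\pi^{-1}(y_0)|=k$ and, using surjectivity of $\psi\circ\pi$, choose $x_0$ with $\psi(\pi(x_0))=y_0$. Then every $\phi$ with $\hat{\pi}(\phi)=\psi$ satisfies $\phi(x_0)\in\pi^{-1}(y_0)$, a set of exactly $k$ points, and your (correct) observation that $\phi$ is determined by $\phi(x_0)$ via minimality of $(X,T)$ — equivalently the paper's pigeonhole step — gives $|\hat{\pi}^{-1}(\{\psi\})|\le k$. No statement about $|\pi^{-1}(\psi(y_0))|$ is needed.

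There is a second gap in item (iii): you write ``if $\phi^{-1}$ denotes the inverse $\GL$-isomorphism'', but the hypothesis only says $\hat{\pi}(\phi)\in N^{*}(Y,S)$; $\phi$ itself is merely a surjective $\GL$-endomorphism and need not be invertible. This matters in the paper's applications (e.g.\ Corollary on fixed points of $\sigma_L$), where (iii) is invoked for arbitrary $\GL$-endomorphisms of $X_{\sigma_L}$ before any coalescence is known. The correct argument, as in the paper, uses only surjectivity of $\phi$ and injectivity of $\hat{\pi}(\phi)$: given $y$ with distinct preimages $z_1,\dots,z_n$, pick $x'_i$ with $\phi(x'_i)=z_i$; these are distinct, $\hat{\pi}(\phi)(\pi(x'_i))=\pi(\phi(x'_i))=y$ for all $i$, and injectivity of $\hat{\pi}(\phi)$ forces all $\pi(x'_i)$ to equal one point $z$ with $\hat{\pi}(\phi)(z)=y$; hence $|\pi^{-1}(z)|\ge n$, which is the displayed inclusion. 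With these two corrections your outline becomes the paper's proof.
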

	
	\begin{proof}
		Set $\phi\in \End(X,T)$. By definition, the map $\hat{\pi}(\phi):Y\to Y$ given by $\hat{\pi}(\phi)(\pi(x))=\pi(\phi(x))$ is well defined and is surjective by minimality of $(Y,S,\Z^{d})$. So $\hat{\pi}(\phi)$ is an endomorphism of $(Y,S,\Z^{d})$. Moreover, if $\phi$ is an automorphism of $(X,T,\Z^{d})$, then $\hat{\pi}(\phi)$ is invertible. Indeed, $\hat{\pi}(\phi)\circ \hat{\pi}(\phi^{-1})\circ \pi=\pi\circ\phi\circ \phi^{-1}=\pi$, so we conclude that $\hat{\pi}(\phi)\circ \hat{\pi}(\phi^{-1})=\Id_{Y}$.
		
To prove \cref{CompatibilityPropertyEnumerate3}, fix any $\psi\in \End(Y,S)$ and suppose that $\min\limits_{y\in Y}|\pi^{-1}(\{y\})|=c<\infty$ (if not, then there is nothing to prove). Let $x_{0}\in X$ and $y_{0}\in Y$ be such that $|\pi^{-1}(\{y_{0}\})|=c$, and $y_{0}=\psi(\pi(x_{0}))$. Assume there exists $c+1$ endomorphisms $\phi_{0},\ldots,\phi_{c}$ of $(X,T,\Z^{d})$, in $\hat{\pi}(\{\psi\})^{-1}$. The compatibility then implies that
		 $y_{0}=\psi(\pi(x_{0}))=\pi(\phi_{0}(x_{0}))=\cdots=\pi(\phi_{c}(x_{0}))$. So, by the pigeonhole principle, there must exist two different indices $0\leq i,j\leq c$ such that $\phi_{i}(x_{0})=\phi_{j}(x_{0})$. The minimality of $(X,T,\Z^{d})$ then gives that $\phi_{i}=\phi_{j}$.
		
The proofs concerning the items \ref{CompatibilityPropertyEnumeratei} and \ref{CompatibilityPropertyEnumerateii} on $\GL$-endomorphisms use similar arguments and are left to the reader. 

To prove \cref{CompatibilityPropertyEnumerateiii}, we consider any $y \in Y$ with $n$ preimages $x_1, \ldots, x_n \in X$. Since $\phi$ is onto, there are $x'_1, \ldots, x'_n \in X$ such that $\phi(x'_i)= x_i$. Notice that, $\hat{\pi}(\phi) (\pi(x'_i)) = \pi(\phi(x'_i))= \pi(x_i) =y$. It follows that $ \hat{\pi}(\phi) (\pi(x'_i)) = \hat{\pi}(\phi) (\pi(x'_j))$ for any indices $i,j = 1, \ldots, n$.
Since $\hat{\pi}(\phi)$ is invertible, all the element $x'_i$ belongs to the same $\pi$-fiber of $z \in Y$. Thus $z$ admits at least $n$ preimages by $\pi$ and satisfies $\hat{\pi}(\phi)(z) = y$. The claim follows.  
	\end{proof}
	
It is known that factor maps between equicontinuous systems are compatible \cite{auslander1988minimal}, but as we will see in the next section, they are not necessarily compatible with $\GL$-endomorphisms (see \cref{rem:compatibilityCounterExample}). Nevertheless, the maximal equicontinuous factor is an example of a factor compatible with $\GL$-endomorphisms as proved in \cite[Theorem 5 and Corollary 3]{baake2018reversing}. This can also be seen by using the universal property of the maximal equicontinuous factor.

\begin{lemma}\cite[Corollary 3]{baake2018reversing}\label{MaximalEquicontinuousFactorCompatibleWithHomomorphisms} Let $(X,T,\Z^{d})$ be a minimal topological dynamical system and let ${\pi_{eq}:(X,T,\Z^{d})\to (X_{eq},T_{eq},\Z^{d})}$ denote its maximal equicontinuous factor. 
Then $\pi_{eq}$ is compatible with $\GL$-endomorphisms.
\end{lemma}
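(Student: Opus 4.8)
The plan is to derive Lemma~\ref{MaximalEquicontinuousFactorCompatibleWithHomomorphisms} from the universal property of the maximal equicontinuous factor, combined with the compatibility of factor maps between equicontinuous systems (already quoted from \cite{auslander1988minimal}). Let $\phi \in N(X,T)$ be an $M$-endomorphism for some $M \in \GL$. The key observation is that $\phi$, although it does not commute with the $\Z^d$-action, still distorts it only by the linear change of coordinates $M$. So I would first consider the new topological dynamical system $(X, T \circ M, \Z^d)$, i.e. the system with the \emph{same} space $X$ but with the $\Z^d$-action $\widetilde T$ defined by $\widetilde T^{\bm n} = T^{M {\bm n}}$. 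This is genuinely a $\Z^d$-action since $M \in \GL$ is invertible over $\Z$. With this reparametrization, the relation $\phi \circ T^{\bm n} = T^{M{\bm n}} \circ \phi$ says precisely that $\phi : (X,T,\Z^d) \to (X, \widetilde T, \Z^d)$ is an ordinary factor map (in fact a conjugacy, since $\phi$ need not be invertible here — a $\GL$-endomorphism — but onto and intertwining, hence a factor map).

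The next step is to identify the maximal equicontinuous factor of $(X, \widetilde T, \Z^d)$. I claim it is $(X_{eq}, T_{eq} \circ M, \Z^d) = (X_{eq}, \widetilde{T_{eq}}, \Z^d)$ with the same factor map $\pi_{eq} : X \to X_{eq}$, where $\widetilde{T_{eq}}^{\bm n} := T_{eq}^{M{\bm n}}$. Indeed, $\widetilde{T_{eq}}$ is still an equicontinuous $\Z^d$-action on $X_{eq}$ (equicontinuity of the family $\{T_{eq}^{\bm n}\}$ is clearly unchanged by restricting the index set $\Z^d$ to the subgroup, in fact full subgroup, $M\Z^d = \Z^d$), $\pi_{eq}$ intertwines $\widetilde T$ and $\widetilde{T_{eq}}$ because it intertwines $T$ and $T_{eq}$, and the universal property is inherited: any equicontinuous factor $(Y, \widetilde S, \Z^d)$ of $(X, \widetilde T, \Z^d)$ gives, upon reparametrizing back by $M^{-1}$, an equicontinuous factor $(Y,S,\Z^d)$ of $(X,T,\Z^d)$, through which $\pi_{eq}$ factors by the maximality of $(X_{eq},T_{eq},\Z^d)$; this factorization is automatically equivariant for the reparametrized actions. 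Hence $\pi_{eq} : (X,\widetilde T,\Z^d) \to (X_{eq}, \widetilde{T_{eq}},\Z^d)$ is the maximal equicontinuous factor.

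Now the conclusion is quick. The map $\phi$ yields a factor map $(X,T,\Z^d) \to (X,\widetilde T,\Z^d)$ which descends to the maximal equicontinuous factors: composing $\pi_{eq}\circ\phi : (X,T,\Z^d) \to (X_{eq},\widetilde{T_{eq}},\Z^d)$ is a factor onto an equicontinuous system, so by the universal property of $(X_{eq},T_{eq},\Z^d)$ there is a (unique) factor map $\widehat\phi : (X_{eq},T_{eq},\Z^d) \to (X_{eq},\widetilde{T_{eq}},\Z^d)$ with $\pi_{eq}\circ\phi = \widehat\phi\circ\pi_{eq}$. This last identity is exactly what it means for $\pi_{eq}$ to be compatible with $\phi$: if $\pi_{eq}(x) = \pi_{eq}(y)$, then $\pi_{eq}(\phi(x)) = \widehat\phi(\pi_{eq}(x)) = \widehat\phi(\pi_{eq}(y)) = \pi_{eq}(\phi(y))$. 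Since $\phi \in N(X,T)$ was arbitrary, $\pi_{eq}$ is compatible with $\GL$-endomorphisms. (As a by-product $\widehat\phi$ is the image $\hat\pi_{eq}(\phi)$ of Lemma~\ref{CompatibilityPropertyFactors}, and it is an $M$-endomorphism of $(X_{eq},T_{eq},\Z^d)$.)

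I expect the only subtle point to be the verification that the reparametrized system $(X_{eq}, \widetilde{T_{eq}}, \Z^d)$ is still \emph{maximal} among equicontinuous factors of $(X, \widetilde T, \Z^d)$ — i.e. that the universal property survives the change of coordinates by $M$. This is where one must be a little careful: one uses that $M \in \GL$ is invertible \emph{over} $\Z$ (so $M\Z^d = \Z^d$ and the reparametrization is a bijection on the acting group), which is precisely the hypothesis available for $\GL$-endomorphisms and which would fail for a general integer matrix. Everything else is bookkeeping: equicontinuity and the factor relations are manifestly preserved under replacing $\{T^{\bm n} : {\bm n}\in\Z^d\}$ by $\{T^{M{\bm n}} : {\bm n}\in\Z^d\}$, since as \emph{sets} of homeomorphisms these coincide, only the $\Z^d$-labelling changes.
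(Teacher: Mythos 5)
Your argument is correct and follows exactly the route the paper itself indicates: the paper gives no written proof but cites \cite{baake2018reversing} and remarks that the lemma ``can also be seen by using the universal property of the maximal equicontinuous factor'', which is precisely what you carry out (viewing $\phi$ as a genuine factor map onto the reparametrized system and applying the universal property to $\pi_{eq}\circ\phi$). Note that your middle step identifying the maximal equicontinuous factor of $(X,\widetilde T,\Z^d)$ is not actually needed: the final application of the universal property of $(X_{eq},T_{eq},\Z^d)$ only uses that $(X_{eq},\widetilde{T_{eq}},\Z^d)$ is equicontinuous and that $\pi_{eq}\circ\phi$ is a factor map from $(X,T,\Z^d)$ onto it.
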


 \cref{CompatibilityPropertyFactors} and \cref{MaximalEquicontinuousFactorCompatibleWithHomomorphisms} illustrate that to study $\GL$-endomorphisms, a first step is to understand the equicontinuous systems. This will be done in \cref{SectionOdoemterSystems} for the class of minimal equicontinous Cantor systems: the odometer systems.

\subsection{$\Z^{d}$-Odometer systems}\label{SectionOdoemterSystems}
	
Odometer systems are the equicontinuous minimal Cantor systems. They are therefore the maximal equicontinuous factor for a large family of minimal symbolic systems, such as Toeplitz flows and some substitutive subshifts. We refer to \cite{cortez2006toeplitz} for the study of odometer systems and $\Z^d$-Toeplitz sequences.  We use  the same notations.

In this section, we briefly recall the basic definitions of odometers. Subsequently, we describe the linear representation semigroup of odometer systems (\cref{LemmaNessesaryConditionNormalizerOdometer}), which we then use to completely characterize it for constant-base $\Z^{2}$- odometer systems (\cref{GeneralTheoremDifferentCasesNormalizer}). The initial exploration of $\GLtwo$-endomorphisms between $\Z^{2}$-odometer systems was initiated in \cite{giordano2019zdodometer} and later extended to higher dimensions in \cite{merenkov2022odometers}.

\subsubsection{Basic definitions}

Let $Z_{0}\geqslant Z_{1}\geqslant \ldots \geqslant Z_{n}\geqslant Z_{n+1}\geqslant\ldots$ be a nested sequence of finite-index subgroups of $\Z^{d}$ such that $\bigcap\limits_{n\geq 0}Z_{n}=\{{\bm 0}\}$ and let $\alpha_{n}:\Z^{d}/Z_{n+1}\to \Z^{d}/Z_{n}$ be the function induced by the inclusion map. We consider the inverse limit of these groups
	$$\overleftarrow{\Z^{d}}_{(Z_{n})}=\lim\limits_{\leftarrow n} (\Z^{d}/Z_{n},\alpha_{n}),$$
\ie, $\overleftarrow{\Z^{d}}_{(Z_{n})}$ is the subset of the product $\prod\limits_{n\geq 0} \Z^{d}/Z_{n}$ consisting of the elements ${\overleftarrow{g}=({\bm g}_{n})_{n\geq 0}}$ such that $\alpha_{n}({\bm g}_{n+1})={\bm g}_{n}\ (\text{mod}\ Z_{n})$ for all $n\geq 0$. The odometer $\overleftarrow{\Z^{d}}_{(Z_{n})}$ is a compact 0-dimensional topological group, whose topology is generated by the cylinder sets
	$$[{\bm a}]_{n}=\left\{\overleftarrow{g}\in \overleftarrow{\Z^{d}}_{(Z_{n})}: {{\bm g}}_{n}={{\bm a}}\right\},$$
	
	\noindent with ${{\bm a}}\in \Z^{d}/Z_{n}$, and $n\geq 0$. Now, consider the group homomorphism $\kappa_{(Z_{n})}:\Z^{d}\to \prod\limits_{n\geq 0} \Z^{d}/Z_{n}$ defined for ${\bm n}\in \Z^{d}$ as
	$$\kappa_{(Z_{n})}({\bm n})=[{\bm n}\ \text{mod}\ Z_{n}]_{n\geq 0}.$$
	
The image of $\Z^{d}$ under $\kappa_{(Z_{n})}$ is dense in $\overleftarrow{\Z^{d}}_{(Z_{n})}$, allowing us to define the $\Z^{d}$-action ${\bm n}{\bm +}\overleftarrow{g}=\kappa_{(Z_{n})}({\bm n})+\overleftarrow{g}$, where ${\bm n}\in \Z^{d}$ and $\overleftarrow{g}\in \overleftarrow{\Z^{d}}_{(Z_{n})}$. This action is well-defined and continuous. The resulting topological dynamical system $(\overleftarrow{\Z^{d}}_{(Z_{n})}, {\bm +},\Z^{d})$ is called a $\Z^{d}$-\emph{odometer system}, and for the rest of this article, we will simply use the notation $\overleftarrow{\Z^{d}}_{(Z{n})}$, i.e., denoted just by its phase space.
Similarly, its set of automorphisms, endomorphisms, and linear representation semigroup will be denoted as $\Aut(\overleftarrow{\Z^{d}}_{(Z_{n})})$, $N(\overleftarrow{\Z^{d}}_{(Z_{n})})$, and $\vec{N}(\overleftarrow{\Z^{d}}_{(Z_{n})})$, respectively.

Notice that the ``return times'' of the action to a cylinder set $[{\bm a}]_n$ is a finite-index subgroup of $\Z^d$, or more precisely: 
\begin{align}\label{eq:ReturnTime}
\forall \overleftarrow{g} \in [{\bm a}]_n, \quad \{{\bm n} \in \Z^d: {\bm n}{\bm +} \overleftarrow{g} \in [{\bm a}]_n\} = Z_n.
\end{align}
This observation is the base to show that an odometer system  is a minimal equicontinuous system. This also shows that the action is aperiodic since the intersection of all the $Z_n$ is trivial. 
	
We will be particularly concerned by a special case of odometers: namely the \emph{constant-base} ones. In these systems, $Z_{n}=L^{n}(\Z^{d})$ for each $n \ge 0$, where $L\in \mathcal{M}(d,\Z)$ is an expansion matrix. Recall that an integer matrix $L\in \mathcal{M}(d,\Z)$ is an \emph{expansion} if the norm of each eigenvalue is greater than 1. To simplify notation, we denote the constant-base odometer $\overleftarrow{\Z^{d}}_{(L^{n} (\Z^d))}$ as $\overleftarrow{\Z^{d}}_{(L^{n})}$.

The next result characterizes the factor odometer systems of a fixed odometer system.
\begin{lemma}\cite[Lemma 1]{cortez2006toeplitz}\label{CharacterizationFactorOdometer}
		Let $\overleftarrow{\Z^{d}}_{(Z_{n}^{j})}$ be two odometer systems $(j=1,2)$. There exists a factor map $\pi:\overleftarrow{\Z^{d}}_{(Z_{n}^{1})}\to \overleftarrow{\Z^{d}}_{(Z_{n}^{2})}$ if and only if and only if for every $n \in \NN$ there exists some $m \in \NN$ such that $Z_{m}^{1}\leqslant Z_{n}^{2}$.
	\end{lemma}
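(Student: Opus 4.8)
The plan is to establish the two implications separately, working directly from the definition of the inverse limit, the cylinder basis, and the return-time identity \eqref{eq:ReturnTime}.

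\textbf{Sufficiency.} Assume that for every $n\ge 0$ there is an index with $Z^{1}_{m}\leqslant Z^{2}_{n}$. First I would choose such an index $m(n)$ for each $n$ and, by replacing $m(n)$ with $\max_{k\le n}m(k)$, arrange that $n\mapsto m(n)$ is non-decreasing; this preserves the inclusions $Z^{1}_{m(n)}\leqslant Z^{2}_{n}$ since shrinking $Z^{1}_{m(n)}$ keeps it inside $Z^{2}_{n}$. Each inclusion induces a surjective group homomorphism $q_{n}\colon \Z^{d}/Z^{1}_{m(n)}\to \Z^{d}/Z^{2}_{n}$, and I would then set $\pi\big((g_{m})_{m\ge 0}\big)=\big(q_{n}(g_{m(n)})\big)_{n\ge 0}$. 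The verifications are routine: the image lands in the inverse limit because $\alpha^{2}_{n}\circ q_{n+1}=q_{n}\circ(\text{bonding map from level }m(n+1)\text{ to }m(n))$, which follows from $Z^{1}_{m(n)}\leqslant Z^{2}_{n}$; $\pi$ is continuous, being a restriction of a product of locally constant maps; $\pi$ is a continuous group homomorphism with $\pi\circ\kappa_{(Z^{1}_{n})}=\kappa_{(Z^{2}_{n})}$ on $\Z^{d}$, hence it intertwines the two $\Z^{d}$-actions; and $\pi$ has dense image (it contains $\kappa_{(Z^{2}_{n})}(\Z^{d})$) with compact domain, so it is onto. Thus $\pi$ is a factor map.

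\textbf{Necessity.} Suppose $\pi\colon \overleftarrow{\Z^{d}}_{(Z^{1}_{n})}\to \overleftarrow{\Z^{d}}_{(Z^{2}_{n})}$ is a factor map and fix $n\ge 0$. I would consider $U=\pi^{-1}\big([{\bm 0}]_{n}\big)$, a nonempty clopen subset of $\overleftarrow{\Z^{d}}_{(Z^{1}_{n})}$. Since the cylinders $\{[{\bm a}]_{m}\}$ form a basis that refines as $m$ grows, compactness of $U$ lets me pick $m$ large enough that $U$ is a finite union of level-$m$ cylinders. For $\overleftarrow{g}\in U$ and ${\bm k}\in Z^{1}_{m}$, the $m$-th coordinate of ${\bm k}{\bm +}\overleftarrow{g}$ equals that of $\overleftarrow{g}$, so ${\bm k}{\bm +}\overleftarrow{g}$ lies in the same level-$m$ cylinder, hence in $U$. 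Applying $\pi$ and using $\pi({\bm k}{\bm +}\overleftarrow{g})={\bm k}{\bm +}\pi(\overleftarrow{g})$ together with $\pi(U)\subseteq[{\bm 0}]_{n}$, the identity \eqref{eq:ReturnTime} for $\overleftarrow{\Z^{d}}_{(Z^{2}_{n})}$ forces ${\bm k}\in Z^{2}_{n}$. Therefore $Z^{1}_{m}\leqslant Z^{2}_{n}$, as required.

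The only mildly delicate step is the reduction, in the necessity part, of the clopen set $U$ to a union of cylinders of one fixed level $m$ (using compactness and the fact that the cylinder basis refines); once this is in place, the argument is just bookkeeping with quotient maps, the inverse-limit condition, and the return-time identity. Everything else, including the construction of $\pi$ in the sufficiency direction, is direct.
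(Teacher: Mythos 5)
Your proof is correct and follows essentially the same route the paper uses for the analogous \cref{LemmaNessesaryConditionNormalizerOdometer} (the $M=\mathrm{Id}$ case): coordinatewise quotient maps between the finite quotients for sufficiency, and continuity together with the return-time identity \eqref{eq:ReturnTime} for necessity. The only cosmetic difference is that you decompose the whole clopen preimage into level-$m$ cylinders by compactness, whereas it suffices, as in the paper, to pick by continuity a single cylinder contained in the preimage of one cylinder.
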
 
	
\subsubsection{Normalizer condition}
	The proof of \cref{CharacterizationFactorOdometer} can be modified to provide a characterization for the matrices $M\in GL(d,\Z)$ defining a $\GL$-\textit{epimorphism} between two odometer systems $\phi:\overleftarrow{\Z^{d}}_{(Z_{n}^{1})}\to \overleftarrow{\Z^{d}}_{(Z_{n}^{2})}$. 
	
	\begin{lemma}\label{LemmaNessesaryConditionNormalizerOdometer}   Set $M\in GL(d,\Z)$. There exists a continuous map 
		$\phi:\overleftarrow{\Z^{d}}_{(Z_{n}^{1})}\to \overleftarrow{\Z^{d}}_{(Z_{n}^{2})}$, such that $$\forall {\bm n} \in \Z^d,\overleftarrow {g} \in \overleftarrow{\Z^{d}}_{(Z_{n}^{1})},
		\quad \phi ({\bm n}{\bm +} (\overleftarrow {g} )) = M{\bm n }{\bm +} \phi(\overleftarrow {g} ),$$
 if and only if 
		\begin{equation}\label{normalizercondition}\tag{Epimorphism Condition}
		\forall n \in \NN, \exists m(n)\in\NN \textrm{ s.t. }    MZ_{m(n)}^{1}\leqslant Z_{n}^{2}.
		\end{equation}
	\end{lemma}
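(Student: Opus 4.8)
The statement is an ``epimorphism'' analogue of \cref{CharacterizationFactorOdometer}, and the plan is to mimic that proof while carefully tracking the linear twist $M$. The argument has two directions.

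\emph{Necessity.} Suppose $\phi$ exists. Since $\phi$ is continuous on the compact space $\overleftarrow{\Z^{d}}_{(Z_{n}^{1})}$ and the cylinders $[{\bm b}]_{n}^{2}$ form a clopen basis of $\overleftarrow{\Z^{d}}_{(Z_{n}^{2})}$, for each $n\in\NN$ the preimage $\phi^{-1}([{\bm b}]_{n}^{2})$ is clopen, hence a finite union of cylinders at some level $m=m(n)$ of the first odometer; enlarging $m$ if necessary we may assume this holds simultaneously for all ${\bm b}\in \Z^d/Z_n^2$. Fix $\overleftarrow g$ in the first odometer and set ${\bm b}=\phi(\overleftarrow g)_n$, so $\overleftarrow g\in \phi^{-1}([{\bm b}]_n^2)$, a union of level-$m$ cylinders. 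For ${\bm k}\in Z_{m}^{1}$ we have ${\bm k}{\bm +}\overleftarrow g$ in the same level-$m$ cylinder as $\overleftarrow g$ (by \eqref{eq:ReturnTime} applied to the first odometer), hence $\phi({\bm k}{\bm +}\overleftarrow g)\in[{\bm b}]_n^2$. On the other hand the intertwining relation gives $\phi({\bm k}{\bm +}\overleftarrow g)=M{\bm k}{\bm +}\phi(\overleftarrow g)$, which therefore also lies in $[{\bm b}]_n^2$; by \eqref{eq:ReturnTime} for the second odometer this forces $M{\bm k}\in Z_{n}^{2}$. As ${\bm k}\in Z_m^1$ was arbitrary, $MZ_{m(n)}^{1}\leqslant Z_{n}^{2}$.

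\emph{Sufficiency.} Assume \eqref{normalizercondition}. Passing to a subsequence of the $m(n)$ we may assume $n\mapsto m(n)$ is nondecreasing and $m(n)\ge n$. Lifting $M$ to a group homomorphism $\Z^d\to\Z^d$, the inclusions $MZ_{m(n)}^{1}\leqslant Z_{n}^{2}$ mean that $M$ descends to well-defined group homomorphisms $\bar M_n\colon \Z^d/Z_{m(n)}^{1}\to \Z^d/Z_n^2$, compatible with the bonding maps $\alpha_n$ on both sides; taking the inverse limit yields a continuous group homomorphism $\phi\colon \overleftarrow{\Z^{d}}_{(Z_{n}^{1})}\to\overleftarrow{\Z^{d}}_{(Z_{n}^{2})}$. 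By construction $\phi\circ\kappa_{(Z_n^1)}=\kappa_{(Z_n^2)}\circ M$ on $\Z^d$, i.e. $\phi({\bm n}{\bm +}\overleftarrow 0)=M{\bm n}{\bm +}\overleftarrow 0$; since $\phi$ is a continuous homomorphism this extends to $\phi({\bm n}{\bm +}\overleftarrow g)=M{\bm n}{\bm +}\phi(\overleftarrow g)$ for all ${\bm n}\in\Z^d$ and all $\overleftarrow g$ by density of $\kappa_{(Z_n^1)}(\Z^d)$ and continuity of the action. This is the required $\phi$.

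\emph{Main obstacle.} The delicate point is purely bookkeeping: in the necessity direction one must make the choice of level $m(n)$ uniform over all target cylinders at level $n$ and verify that the return-time identity \eqref{eq:ReturnTime} is being invoked on the correct odometer in each instance; in the sufficiency direction one must check the compatibility of the induced maps $\bar M_n$ with the bonding maps so that the inverse limit map is well defined. Neither step is deep, but both require care with the indexing, which is why the analogous argument in \cite[Lemma 1]{cortez2006toeplitz} is the right template to follow.
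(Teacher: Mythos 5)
Your proof is correct and follows essentially the same route as the paper: necessity via continuity of $\phi$, the return-time identity \eqref{eq:ReturnTime}, and the intertwining relation applied to points of a cylinder in the preimage; sufficiency via the quotient homomorphisms $\Z^d/Z_{m(n)}^1 \to \Z^d/Z_n^2$ induced by $M$ assembled into the inverse-limit map. The extra uniformization of $m(n)$ over the cosets and the explicit density/continuity remark are harmless refinements of the same argument.
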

	
	It follows from \cref{LemmaNessesaryConditionNormalizerOdometer} that a matrix $M\in GL(d,\Z)$ belongs to the linear representation semigroup $\vec{N}(\overleftarrow{\Z^{d}}_{(Z_{n})})$ of an odometer system if and only if it satisfies the following condition, which we called \emph{Normalizer condition}
\begin{equation}\label{normalizercondition1}\tag{NC 1}
		\forall n \in \NN, \exists m(n)\in\NN \textrm{ s.t. }    MZ_{m(n)}\leqslant Z_{n}.
		\end{equation}
		
	\begin{proof}
	For the sufficiency, assume that $M\in GL(d,\Z)$ satisfies \eqref{normalizercondition}. Since the sequences $\{Z_{n}^{i}\}_{n>0}$, $i=1,2$ are decreasing, we may assume that $m(n)\leq m(n+1)$ for all $n\in\NN$. Thus, we have homomorphisms $\phi_{m(n)}^{M}:\Z^{d}/Z_{m(n)}^{1}\to \Z^{d}/Z_{n}^{2}$, given by $\phi_{m(n)}({\bm m}\ (\text{mod}\ Z_{m(n)}^1) )=M{\bm m}\ (\text{mod}\ Z^2_n)$. To finish the proof, we only have to remark that ${\phi^{M}:\overleftarrow{\Z^{d}}_{(Z_{n}^{1})}\to\overleftarrow{\Z^{d}}_{(Z_{n}^{2})}}$ defined as ${\phi(({\bm g}_{n})_{n\in\NN})}=(\phi_{m(n)}({\bm g}_{m(n)}))_{n\in \NN}$ is an $M$-epimorphism.

We now prove the necessity. Let $\phi:\overleftarrow{\Z^{d}}_{(Z_{n}^{1})}\to \overleftarrow{\Z^{d}}_{(Z_{n}^{2})}$ be an $M$-epimorphism. By continuity, for any $n\in \NN$ and ${\bm g}\in \Z^{d}/Z_{n}^{2}$, there exists $m\in \NN$ and ${\bm f}\in \Z^{d}/Z_{m}^{1}$ such that ${[{\bm f}]_{m}\subseteq \phi^{-1}([{\bm g}]_{n})}$. Set ${\bm h}\in Z_{m}^{1}$. For all $\overleftarrow{f}\in [{\bm f}]_{m}$, we have by \eqref{eq:ReturnTime} that ${\bm h}{\bm +}\overleftarrow{f}\in [{\bm f}]_{m}$, which implies that $\phi({\bm h}{\bm +} \overleftarrow{f})=M{\bm h} {\bm +}\phi(\overleftarrow{f})\in [{\bm g}]_{n}$. Since $\phi(\overleftarrow{f})$ is in $[{\bm g}]_{n}$, the set $\left\{{\bm m}\in \Z^{d}\colon {\bm m}{\bm +}\phi(\overleftarrow{f})\in [{\bm g}]_{n}\right\}$ is equal to $Z_{n}^{2}$ (by \eqref{eq:ReturnTime}), which implies that $M{\bm h}\in Z_{n}^{2}$.
	\end{proof}

Notice that since the sequences $\{Z_n^i\}$, $i=1,2$ are decreasing, the \eqref{normalizercondition} is satisfied for any large enough $m\in\NN$ provided it is true for one $m$. This remark implies that the set of matrices $M$ (not necessarily invertible) satisfying the condition \eqref{normalizercondition1} is stable under product and sum, so it is a ring.  
By applying this remark we get the following result

\begin{corollary}\label{cor:CorollariesNormalizerConditionOdometer1}

The semigroup ${N}(\overleftarrow{\Z^{d}}_{(Z_{n})})$ of $\GL$-endomorphims of an odometer $\overleftarrow{\Z^{d}}_{(Z_{n})}$ is a group. 

In particular any $\GL$-endomorphim is invertible and the linear representation semigroup  $\vec{N}(\overleftarrow{\Z^{d}}_{(Z_{n})})$ is a group.    
\end{corollary}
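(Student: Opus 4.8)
The plan is to deduce everything from the remark immediately preceding the statement: the set $\mathcal{R}_M$ of integer matrices (not necessarily invertible) satisfying the condition \eqref{normalizercondition1} for the odometer $\overleftarrow{\Z^{d}}_{(Z_n)}$ is closed under addition and multiplication, hence a subring of $\mathcal{M}(d,\Z)$. The key point I would exploit is that \eqref{normalizercondition1} is a condition about eventual behaviour of the nested sequence $(Z_n)$, so it is insensitive to multiplying by scalars: indeed every $Z_n$ is a finite-index subgroup of $\Z^d$, so it contains $k_n\Z^d$ for some positive integer $k_n$, and conversely $Z_n\leqslant \Z^d$; thus for the identity matrix and more generally for any matrix in $\mathcal{R}_M$, suitable integer multiples land back inside the $Z_n$'s. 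The strategy is: given $M\in\vec{N}(\overleftarrow{\Z^{d}}_{(Z_n)})$, i.e. $M\in GL(d,\Z)\cap\mathcal{R}_M$, produce $M^{-1}\in\mathcal{R}_M$ as well, which by \cref{LemmaNessesaryConditionNormalizerOdometer} gives $M^{-1}\in\vec{N}(\overleftarrow{\Z^{d}}_{(Z_n)})$.

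First I would record that $\mathcal{R}_M$ is a ring containing the identity (trivially $\Id Z_n\leqslant Z_n$) and, crucially, containing the scalar matrix $\det(M)\cdot\Id$ for any $M\in GL(d,\Z)$: in fact $\det(M)=\pm 1$ when $M\in GL(d,\Z)$, so this is automatic, but more to the point I would use the classical adjugate identity $M\cdot\adj(M)=\det(M)\,\Id=\pm\Id$. Since $M\in\mathcal{R}_M$ and $\adj(M)\in\mathcal{M}(d,\Z)$, I would like to conclude $\adj(M)\in\mathcal{R}_M$; but multiplication in a ring only lets me go the "easy" direction. The clean way is: because $M\in GL(d,\Z)$, we have $M^{-1}=\pm\adj(M)\in\mathcal{M}(d,\Z)$, so $M^{-1}$ is itself an integer matrix. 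Now I claim $M^{-1}\in\mathcal{R}_M$. Fix $n$; we must find $p$ with $M^{-1}Z_p\leqslant Z_n$. Since $M\in\mathcal{R}_M$ there is $m$ with $MZ_m\leqslant Z_n$; apply $M^{-1}$ (which, as an element of $GL(d,\Z)$, is a bijection of $\Z^d$ sending finite-index subgroups to finite-index subgroups) to get $Z_m\leqslant M^{-1}Z_n$, equivalently $M^{-1}Z_n\geqslant Z_m$. Hmm — this controls $M^{-1}Z_n$, not $M^{-1}Z_p$. So instead I would iterate the condition: choose $m(n)$ with $MZ_{m(n)}\leqslant Z_n$, then $m(m(n))$ with $MZ_{m(m(n))}\leqslant Z_{m(n)}$, and so on. After $d$ steps we get $M^d Z_{m^{(d)}(n)}\leqslant Z_n$. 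Since $\det M=\pm1$, Cayley–Hamilton writes $M^{-1}$ as an integer polynomial in $M$ of degree $d-1$: $M^{-1}=\pm(M^{d-1}+c_{d-1}M^{d-2}+\cdots+c_1\Id)$ for integers $c_i$ (the characteristic polynomial of $M$ has constant term $\pm1$). Then $M^{-1}=Q(M)$ with $Q\in\Z[x]$, and since $\mathcal{R}_M$ is a ring containing $M$ and $\Id$, it contains $Q(M)=M^{-1}$. This is the heart of the argument.

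With $M^{-1}\in\mathcal{R}_M$ established for every $M\in\vec{N}(\overleftarrow{\Z^{d}}_{(Z_n)})$, \cref{LemmaNessesaryConditionNormalizerOdometer} (in its odometer form, i.e. condition \eqref{normalizercondition1} characterizing membership in $\vec{N}$) gives $M^{-1}\in\vec{N}(\overleftarrow{\Z^{d}}_{(Z_n)})$; together with closure under products (noted earlier in the excerpt), this shows $\vec{N}(\overleftarrow{\Z^{d}}_{(Z_n)})$ is a group. Finally, odometers are equicontinuous, hence coalescent \cite{auslander1988minimal}, so \cref{CoalescenceOfHommorphismsForCoalescentSystems} applies: since $\vec{N}(\overleftarrow{\Z^{d}}_{(Z_n)})$ is a group, every $\GL$-endomorphism is invertible, $N(\overleftarrow{\Z^{d}}_{(Z_n)})=N^*(\overleftarrow{\Z^{d}}_{(Z_n)})$, which proves both assertions of the corollary. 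The only genuinely delicate point is the Cayley–Hamilton step expressing $M^{-1}$ as an integer polynomial in $M$ — I expect that to be the crux, everything else being bookkeeping with the ring structure and an application of the already-proved Proposition \ref{CoalescenceOfHommorphismsForCoalescentSystems}.
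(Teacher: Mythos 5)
Your proposal is correct and follows essentially the same route as the paper: you use the ring structure of the set of matrices satisfying \eqref{normalizercondition1}, apply Cayley--Hamilton with $\det(M)=\pm1$ to write $M^{-1}$ as an integer polynomial in $M$ (hence in the ring, hence in $\vec{N}(\overleftarrow{\Z^{d}}_{(Z_{n})})$ by \cref{LemmaNessesaryConditionNormalizerOdometer}), and then invoke coalescence of equicontinuous systems together with \cref{CoalescenceOfHommorphismsForCoalescentSystems}. The abandoned adjugate digression and the explicit iteration producing $M^{d}Z_{m^{(d)}(n)}\leqslant Z_{n}$ are superfluous (the latter is already contained in the ring closure), but the core argument matches the paper's proof.
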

\begin{proof}
Recall that odometer systems are equicontinuous and, hence, are coalescent \cite{auslander1988minimal}. Thus, from \cref{CorollariesNormalizerConditionOdometer}, to show that any $\GL$-endomorphism of an odometer system is invertible, it is enough to show that $\vec{N}(\overleftarrow{\Z^{d}}_{(Z_{n})})$ is a group.

Since $\vec{N}(\overleftarrow{\Z^{d}}_{(Z_{n})})$ is a semigroup, we only have to prove that any element $M\in \vec{N}(\overleftarrow{\Z^{d}}_{(L^{n})})$ admits an inverse inside $\vec{N}(\overleftarrow{\Z^{d}}_{(Z_{n})})$. Since the sets of matrices satisfying \eqref{normalizercondition} is a ring, any integer polynomial of $M$ also satisfies \eqref{normalizercondition}.
Now, the Cayley-Hamilton theorem implies that $M^{d}=\sum\limits_{k=0}^{d-1}b_{k}M^{k}$, where $b_{k}\in \Z$ are the coefficients of the characteristic polynomial of $M$. Notice that $b_{0}=(-1)^{d}\det(M)$, where $\det(M)=\pm 1$, because $M\in \GL$ so that $b_{0}\in \{-1,1\}$. Multiplying by $M^{-1}$, we conclude that $M^{-1}$ can be written as an integer polynomial on $M$. Hence, $M^{-1}$ satisfies \eqref{normalizercondition}, and by \cref{LemmaNessesaryConditionNormalizerOdometer}, we conclude that $M^{-1}\in \vec{N}(\overleftarrow{\Z^{d}}_{(Z_{n})})$.
	\end{proof}

Recall that the automorphisms of the odometer system are well known: they are the translations on it \cite{auslander1988minimal}.  
\begin{lemma}\label{lem:DescriptAutEquicont} For any odometer system we have that
  $$\Aut(\overleftarrow{\Z^{d}}_{(Z_{n})})  = \{\overleftarrow {g} \in \overleftarrow{\Z^{d}}_{(Z_{n})} \mapsto \overleftarrow {g} +\overleftarrow {h} \in \overleftarrow{\Z^{d}}_{(Z_{n})}:  {\overleftarrow {h} \in \overleftarrow{\Z^{d}}_{(Z_{n})}}\}.$$
In particular  $\Aut(\overleftarrow{\Z^{d}}_{(Z_{n})})$ is an abelian group isomorphic to $\overleftarrow{\Z^{d}}_{(Z_{n})}$.
\end{lemma}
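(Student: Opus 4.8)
The plan is to establish the two inclusions in the stated equality and then read off the group isomorphism; the whole argument rests only on the minimality of the odometer (equivalently, density of $\kappa_{(Z_n)}(\Z^d)$) together with continuity.

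First I would check the easy inclusion $\supseteq$: for $\overleftarrow h \in \overleftarrow{\Z^{d}}_{(Z_{n})}$ the translation $\tau_{\overleftarrow h}\colon \overleftarrow g \mapsto \overleftarrow g + \overleftarrow h$ is a self-homeomorphism of $\overleftarrow{\Z^{d}}_{(Z_{n})}$ (it is continuous, being a translation in a compact topological group, with continuous inverse $\tau_{-\overleftarrow h}$), and it commutes with the action since ${\bm n}{\bm +}(\overleftarrow g + \overleftarrow h) = \kappa_{(Z_n)}({\bm n}) + \overleftarrow g + \overleftarrow h = ({\bm n}{\bm +}\overleftarrow g) + \overleftarrow h$ for every ${\bm n}\in\Z^d$. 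Hence $\tau_{\overleftarrow h}\in \Aut(\overleftarrow{\Z^{d}}_{(Z_{n})})$.

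For the reverse inclusion, let $\phi\in \Aut(\overleftarrow{\Z^{d}}_{(Z_{n})})$ and put $\overleftarrow h := \phi(\overleftarrow 0)$, where $\overleftarrow 0 := \kappa_{(Z_n)}({\bm 0})$ is the neutral element of the group $\overleftarrow{\Z^{d}}_{(Z_{n})}$. The key observation is that, since $\phi$ commutes with the $\Z^d$-action, its restriction to the orbit of $\overleftarrow 0$ is completely determined by the single value $\phi(\overleftarrow 0)$: for all ${\bm n}\in\Z^d$,
\[
\phi(\kappa_{(Z_n)}({\bm n})) = \phi({\bm n}{\bm +}\overleftarrow 0) = {\bm n}{\bm +}\phi(\overleftarrow 0) = \kappa_{(Z_n)}({\bm n}) + \overleftarrow h = \tau_{\overleftarrow h}(\kappa_{(Z_n)}({\bm n})).
\]
Thus $\phi$ and $\tau_{\overleftarrow h}$ agree on $\{{\bm n}{\bm +}\overleftarrow 0 : {\bm n}\in\Z^d\} = \kappa_{(Z_n)}(\Z^d)$, which is dense in $\overleftarrow{\Z^{d}}_{(Z_{n})}$. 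Since both maps are continuous, they coincide on all of $\overleftarrow{\Z^{d}}_{(Z_{n})}$, so $\phi = \tau_{\overleftarrow h}$ is a translation.

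Finally, I would note that the assignment $\overleftarrow h \mapsto \tau_{\overleftarrow h}$ is a group homomorphism from $(\overleftarrow{\Z^{d}}_{(Z_{n})},+)$ to $\Aut(\overleftarrow{\Z^{d}}_{(Z_{n})})$ because $\tau_{\overleftarrow{h_1}}\circ \tau_{\overleftarrow{h_2}} = \tau_{\overleftarrow{h_1}+\overleftarrow{h_2}}$; it is injective since $\tau_{\overleftarrow h} = \id$ forces $\overleftarrow h = \tau_{\overleftarrow h}(\overleftarrow 0) = \overleftarrow 0$; and it is surjective by the inclusion just proved. Hence it is a group isomorphism, and in particular $\Aut(\overleftarrow{\Z^{d}}_{(Z_{n})})$ is abelian. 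There is no real obstacle in this argument: the only nontrivial ingredient is that the orbit of a point in an odometer is dense, which was already recorded when minimality was established in \cref{SectionOdoemterSystems}.
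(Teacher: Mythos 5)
Your proof is correct, and it is essentially the standard argument: the paper does not prove this lemma itself but cites it as a known fact from Auslander, and the classical proof is exactly what you wrote --- translations are clearly equivariant homeomorphisms, and conversely any automorphism agrees with the translation by $\phi(\overleftarrow{0})$ on the dense orbit $\kappa_{(Z_n)}(\Z^d)$, hence everywhere by continuity. As a minor remark, your argument never uses invertibility of $\phi$, so it in fact shows every endomorphism is a translation, which is consistent with the coalescence of equicontinuous systems invoked elsewhere in the paper.
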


As a direct consequence of \cref{cor:CorollariesNormalizerConditionOdometer1} we get the following algebraic structure of the normalizer group of odometer systems.
	
	\begin{corollary}\label{cor:CorollariesNormalizerConditionOdometer2} The normalizer group $N(\overleftarrow{\Z^{d}}_{(Z_{n})})$ of an odometer system is isomorphic to a semidirect product between the odometer system $\overleftarrow{\Z^{d}}_{(Z_{n})}$ and the linear representation group $\vec{N}(\overleftarrow{\Z^{d}}_{(Z_{n})})$.
	\end{corollary}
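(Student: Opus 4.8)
The plan is to combine the exact sequence \eqref{ExactSequenceForNormalizer} with the identification of $\Aut(\overleftarrow{\Z^{d}}_{(Z_{n})})$ from \cref{lem:DescriptAutEquicont} and the group structure of the linear representation semigroup from \cref{cor:CorollariesNormalizerConditionOdometer1}. By \cref{cor:CorollariesNormalizerConditionOdometer1} the semigroup $\vec{N}(\overleftarrow{\Z^{d}}_{(Z_{n})})$ is actually a group, every $\GL$-endomorphism is invertible, so $N(\overleftarrow{\Z^{d}}_{(Z_{n})}) = N^{*}(\overleftarrow{\Z^{d}}_{(Z_{n})})$ and $\vec{N^{*}} = \vec{N}$; thus \eqref{ExactSequenceForNormalizer} becomes a short exact sequence
$$1 \to \Aut(\overleftarrow{\Z^{d}}_{(Z_{n})}) \to N(\overleftarrow{\Z^{d}}_{(Z_{n})}) \xrightarrow{j} \vec{N}(\overleftarrow{\Z^{d}}_{(Z_{n})}) \to 1.$$
To get a semidirect product it suffices to exhibit a group-theoretic section $s : \vec{N}(\overleftarrow{\Z^{d}}_{(Z_{n})}) \to N(\overleftarrow{\Z^{d}}_{(Z_{n})})$ of $j$, i.e.\ a homomorphism with $j \circ s = \id$.

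The natural candidate for the section is the map sending $M \in \vec{N}(\overleftarrow{\Z^{d}}_{(Z_{n})})$ to the $M$-endomorphism $\phi^{M}$ constructed in the proof of \cref{LemmaNessesaryConditionNormalizerOdometer}: for a choice of indices $m(n)$ witnessing \eqref{normalizercondition1}, set $\phi^{M}((\overleftarrow{g}_{n})_{n}) = (M\overleftarrow{g}_{m(n)} \bmod Z_{n})_{n}$. First I would check that, as a map on the odometer, $\phi^M$ does not depend on the choice of the witnessing sequence $m(n)$: if $m'(n)$ is another such sequence, then picking $k(n) \ge \max(m(n), m'(n))$ and using that the connecting maps $\alpha$ are reductions modulo the $Z_n$, both formulas agree with $(M\overleftarrow{g}_{k(n)} \bmod Z_n)_n$. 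This well-definedness is exactly what makes $s$ canonical and, crucially, a homomorphism: for $M_1, M_2 \in \vec{N}$ one can compute $\phi^{M_1} \circ \phi^{M_2}$ using a common refinement of the three index sequences (for $M_1$, for $M_2$, and for $M_1M_2$, which exists since the product satisfies \eqref{normalizercondition1}) and read off that it equals $\phi^{M_1 M_2}$. Since $\phi^{M}$ is visibly an $M$-endomorphism, $j(s(M)) = M$, so $s$ is a section.

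Once the section exists, the splitting lemma for the short exact sequence above gives $N(\overleftarrow{\Z^{d}}_{(Z_{n})}) \cong \Aut(\overleftarrow{\Z^{d}}_{(Z_{n})}) \rtimes \vec{N}(\overleftarrow{\Z^{d}}_{(Z_{n})})$, and by \cref{lem:DescriptAutEquicont} the normal factor is $\overleftarrow{\Z^{d}}_{(Z_{n})}$ itself (as a group, under addition). Concretely, every isomorphism $\phi \in N(\overleftarrow{\Z^{d}}_{(Z_{n})})$ factors uniquely as $\phi = \tau_{\overleftarrow{h}} \circ \phi^{M}$ where $M = j(\phi)$ and $\tau_{\overleftarrow{h}}$ is translation by $\overleftarrow{h} = \phi(\overleftarrow{0}) \in \overleftarrow{\Z^{d}}_{(Z_{n})}$ — indeed $\tau_{\overleftarrow{h}}^{-1} \circ \phi$ fixes $\overleftarrow{0}$ and is still an $M$-endomorphism, hence equals $\phi^M$ by minimality (two $M$-endomorphisms of a minimal system agreeing at a point coincide). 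This also pins down the semidirect action: $\phi^{M} \circ \tau_{\overleftarrow{h}} \circ (\phi^{M})^{-1} = \tau_{M \overleftarrow{h}}$, where $M\overleftarrow{h}$ denotes the image of $\overleftarrow{h}$ under the continuous extension of multiplication by $M$ to the odometer.

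The main obstacle I anticipate is verifying that the candidate section $s$ is genuinely a group homomorphism, i.e.\ the well-definedness and multiplicativity of $M \mapsto \phi^{M}$ independently of the choices of witnessing indices; this is where one must be careful that the inverse sequence structure (the connecting maps being mod-$Z_n$ reductions) makes everything compatible. Everything else — the exactness, the identification of the kernel, and the abstract splitting — follows formally from results already established in the excerpt. An alternative, slightly slicker route avoiding an explicit section: show directly that $\phi \mapsto (\phi(\overleftarrow{0}), j(\phi))$ is an isomorphism onto the semidirect product by checking injectivity via minimality and surjectivity via the construction of \cref{LemmaNessesaryConditionNormalizerOdometer}; but one still needs the same compatibility computation to see the map respects the group law, so the difficulty is the same.
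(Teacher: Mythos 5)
Your proposal is correct and follows essentially the same route as the paper: it builds the explicit section $M \mapsto \phi^{M}$ from \cref{LemmaNessesaryConditionNormalizerOdometer}, uses it to split the exact sequence \eqref{ExactSequenceForNormalizer}, and identifies the kernel $\Aut(\overleftarrow{\Z^{d}}_{(Z_{n})})$ with the odometer via \cref{lem:DescriptAutEquicont}. The only difference is that you spell out the well-definedness and multiplicativity checks that the paper leaves implicit, which is a faithful elaboration rather than a different argument.
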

	
\begin{proof}
Recall from \cref{LemmaNessesaryConditionNormalizerOdometer} that for each $M\in \vec{N}(\overleftarrow{\Z^{d}}_{(Z_{n})})$, one can associate a $M$-isomorphism $\phi^M$ of ${\Z^{d}}_{(Z_{n})}$ defined for any $({\bm g}_{m(n)}\ (\text{mod}\ Z_{m(n)}))_{n\in \NN} \in \overleftarrow{\Z^{d}}_{(Z_{n})}$ by 
$${\phi^M (({\bm g}_{m(n)}\ (\text{mod}\ Z_{m(n)}))_{n\in \NN})}=(M{\bm g}_{n}\ (\text{mod}\ Z_{n}))_{n\in \NN}.$$
Notice that the set $\{\phi^{M}\colon M\in \vec{N}(\overleftarrow{\Z^{d}}_{(Z_{n})})\}$ is a group and defines a group homomorphism $h: \vec{N}(\overleftarrow{\Z^{d}}_{(Z_{n})})\to N(\overleftarrow{\Z^{d}}_{(Z_{n})})$ such that $j\circ h$ is the identity in $\{\phi^{M}\colon M\in \vec{N}(\overleftarrow{\Z^{d}}_{(Z_{n})})\}$, so the exact sequence \eqref{ExactSequenceForNormalizer} is split exact.
	\end{proof}
	
	So, to study the normalizer group of an odometer system, we just have to determine its linear representation group.
Actually, all these results lead us to consider the following question:
\begin{question}\label{ques:realizationOdometer}
Give a characterization of the groups of the form  $\vec{N}(\overleftarrow{\Z^{d}}_{(Z_{n})})$ for any odometer 
$\overleftarrow{\Z^{d}}_{(Z_{n})}$.
\end{question}
We do not answer this question, but we provide necessary conditions for specific odometers: the universal and the constant-base ones in Sections \ref{sec:UnivOdometer} and \ref{sec:ConstantBaseOdometer} respectively.

\subsection{Symbolic dynamics} We recall here classical definitions and we fix the notations on multidimensional subshifts. 

    \subsubsection{Basic definitions} Let $\A$ be a finite alphabet and $d\geq 1$ be an integer. We define a topology on $\A^{\Z^{d}}$ by endowing $\A$ with the discrete topology and considering in $\A^{\Z^{d}}$ the product topology, which is generated by cylinders. Since $\A$ is finite, $\A^{\Z^{d}}$ is a metrizable compact space. In this space, the group $\Z^{d}$ acts by translations (or shifts), defined for every ${\bm n}\in \Z^{d}$ as
$$S^{{\bm n}}(x)_{{\bm k}}=x_{{\bm n}+{\bm k}},\ x= (x_{\bm k})_{\bm k}\in \A^{\Z^{d}},\ {\bm n, \bm k}\in \Z^{d}.$$
	
	
Let $P\subseteq \Z^{d}$ be a finite subset. A \emph{pattern} is an element $\texttt{p}\in \A^{P}$. We say that $P$ is the \emph{support} of $\texttt{p}$, and we denote $P=\supp(\texttt{p})$. A pattern \emph{occurs in} $x\in \A^{\Z^{d}}$, if there exists ${\bm n}\in \Z^{d}$ such that $\texttt{p}=x|_{{\bm n}+P}$ (identifying ${\bm n}+P$ with $P$ by translation). In this case, we denote it $\texttt{p}\sqsubseteq x$ and we call this ${\bm n}$ an \emph{occurrence in} $x$ of $\texttt{p}$.
	
A \emph{subshift} $(X,S,\Z^{d})$ is given by a closed subset $X\subseteq \A^{\Z^{d}}$ which is invariant by the $\Z^{d}$-action. A subshift defines a  \emph{language}. For a finite subset $P\Subset \Z^{d}$ we define
$$\mathcal{L}_{P}(X)=\{\texttt{p}\in \A^{P}: \exists x \in X,\ \texttt{p}\sqsubseteq x\}.$$
	
The \emph{language} of a subshift $X$ is defined as
$$\mathcal{L}(X)=\bigcup\limits_{P\Subset \Z^{d}}\mathcal{L}_{P}(X).$$
	
Let $(X,S,\Z^{d})$ be a subshift and $x\in X$. We say that ${\bm p}\in \Z^{d}$ is a \emph{period} of $x$ if for all ${\bm n}\in \Z^{d}$, $x_{{\bm n}+ {\bm p}}=x_{{\bm n}}$. The subshift $(X,S,\Z^{d})$ is said to be \emph{aperiodic} if there are no nontrivial periods.

Let $\B$ be another finite alphabet and $Y\subseteq \B^{\Z^{d}}$ be a subshift. For $P\Subset \Z^{d}$, we define a $P$-\emph{block map} as a map of the form $\Phi: \mathcal{L}_{P}(X)\to \B$. This induces a factor map $\phi:X\to Y$ given by
$$\phi(x)_{{\bm n}}= \Phi(x|_{{\bm n}+ P}).$$
	
The map $\phi$ is called the \emph{sliding block code} induced by $\Phi$ and $P$ is the support of the map $\phi$. In most of the cases we may assume that the support of the sliding block codes is a ball of the form $B({\bm 0},r)$, where $r$ is a positive integer. We define the \emph{radius}, denoted as $r(\phi)$, as the smallest positive integer $r$ for which a $B({\bm 0},r)$-block map can be defined to induce $\phi$. The next theorem characterizes the factor maps between two subshifts.
	
\begin{theorem}[Curtis-Hedlund-Lyndon] Let $(X,S,\Z^{d})$ and $(Y,S,\Z^{d})$ be two subshifts. A map $\phi:(X,S,\Z^{d})\to (Y,S,\Z^{d})$ is a factor map if and only if there exists a $B({\bm 0},r)$-block map $\Phi:\mathcal{L}_{B({\bm 0},r)}(X)\to \mathcal{L}_{1}(Y)$, such that $\phi(x)_{{\bm n}}=\Phi(x|_{{\bm n}+B({\bm 0},r)})$, for all ${\bm n}\in \Z^{d}$ and $x\in X$.
\end{theorem}
	
For $\GL$-epimorphisms there is a similar characterization. See \cite[Theorem 2.7]{cabezas2023homomorphisms} for a proof.
	
\begin{theorem}[Curtis-Hedlund-Lyndon theorem for $\GL$-epimorphisms]\label{thm:CHLEpimorphism} Let $(X,S,\Z^{d})$ and $(Y,S,\Z^{d})$ be two subshifts and ${M\in \GL}$. A map $\phi:(X,S,\Z^{d})\to (X,S,\Z^{d})$ is an $M$-endomorphism if and only if there exists a $B({\bm 0},r)$-block map $\Phi:\mathcal{L}_{B({\bm 0},r)}(X)\to \mathcal{L}_{1}(Y)$, such that $\phi(x)_{{\bm n}}=\Phi(x|_{M^{-1}{\bm n}+B({\bm 0},r)})$, for all ${\bm n}\in \Z^{d}$ and $x\in X$.
\end{theorem}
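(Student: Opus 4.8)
The plan is to follow the classical proof of the Curtis--Hedlund--Lyndon theorem, adding only the bookkeeping needed to carry along the linear part $M$; I will argue for a general $M$-epimorphism $\phi\colon(X,S,\Z^{d})\to(Y,S,\Z^{d})$ (consistently with the block map landing in $\mathcal{L}_{1}(Y)$), the self-map case being the special case $Y=X$. Throughout I will use that $M\in\GL$ has inverse $M^{-1}\in\GL$, so that $M^{-1}{\bm n}\in\Z^{d}$ for every ${\bm n}\in\Z^{d}$, and that $M\Z^{d}=\Z^{d}$.

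For the implication ``$\Rightarrow$'', I would start from an $M$-epimorphism $\phi$ and extract a radius from continuity. For each of the finitely many letters $a\in\mathcal{L}_{1}(Y)$, the set $\{y\in Y:y_{\bm 0}=a\}$ is clopen, so its $\phi$-preimage is clopen in the compact space $X$, hence a finite union of ball-cylinders; picking $r$ large enough to serve all these letters simultaneously, one gets that $\phi(x)_{\bm 0}$ depends only on $x|_{B({\bm 0},r)}$. This defines the block map $\Phi\colon\mathcal{L}_{B({\bm 0},r)}(X)\to\mathcal{L}_{1}(Y)$ by $\Phi\big(x|_{B({\bm 0},r)}\big)=\phi(x)_{\bm 0}$, the value being a genuine letter of $Y$ because $\phi(x)\in Y$. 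To read off the formula for an arbitrary coordinate ${\bm n}\in\Z^{d}$, I would write ${\bm n}=M{\bm m}$ with ${\bm m}=M^{-1}{\bm n}\in\Z^{d}$ and use the relation $\phi\circ S^{\bm m}=S^{M{\bm m}}\circ\phi$:
$$\phi(x)_{\bm n}=\big(S^{M{\bm m}}\phi(x)\big)_{\bm 0}=\big(\phi(S^{\bm m}x)\big)_{\bm 0}=\Phi\big((S^{\bm m}x)|_{B({\bm 0},r)}\big)=\Phi\big(x|_{M^{-1}{\bm n}+B({\bm 0},r)}\big),$$
which is exactly the claimed expression; note that the reading window $M^{-1}{\bm n}+B({\bm 0},r)$ is forced precisely by this substitution ${\bm n}=M{\bm m}$.

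For the converse ``$\Leftarrow$'', given such a $\Phi$ I would define $\phi$ by $\phi(x)_{\bm n}=\Phi\big(x|_{M^{-1}{\bm n}+B({\bm 0},r)}\big)$ and verify the required properties. It is well defined coordinatewise since $M^{-1}{\bm n}\in\Z^{d}$ and $x|_{M^{-1}{\bm n}+B({\bm 0},r)}\in\mathcal{L}_{B({\bm 0},r)}(X)$. It is continuous: if $x,x'$ agree on a ball $B({\bm 0},R)$, then $\phi(x)$ and $\phi(x')$ agree on $\{{\bm n}:M^{-1}{\bm n}+B({\bm 0},r)\subseteq B({\bm 0},R)\}$, a set exhausting $\Z^{d}$ as $R\to\infty$ because $\|M^{-1}\|$ is a fixed constant. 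And it intertwines, by the computation
$$\phi(S^{\bm m}x)_{\bm n}=\Phi\big((S^{\bm m}x)|_{M^{-1}{\bm n}+B({\bm 0},r)}\big)=\Phi\big(x|_{M^{-1}(M{\bm m}+{\bm n})+B({\bm 0},r)}\big)=\phi(x)_{M{\bm m}+{\bm n}}=\big(S^{M{\bm m}}\phi(x)\big)_{\bm n},$$
so $\phi\circ S^{\bm m}=S^{M{\bm m}}\circ\phi$ for all ${\bm m}$. Finally, since $M\Z^{d}=\Z^{d}$, this identity shows that $\phi(X)$ is already invariant under every shift $S^{\bm k}$, hence a subshift of $\B^{\Z^{d}}$; together with the hypothesis that $\phi$ is a surjection onto $Y$ this makes $\phi$ an $M$-epimorphism (in the minimal setting relevant here, surjectivity onto $Y$ is moreover automatic, $\phi(X)$ being a non-empty subshift of the minimal system $Y$).

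I expect the only genuine obstacle to lie in the forward direction: passing cleanly from continuity of $\phi$ to one finite radius $r$ valid uniformly over all letters of $\mathcal{L}_{1}(Y)$, and then getting the index arithmetic right so that the reading window is $M^{-1}{\bm n}+B({\bm 0},r)$ (dictated by ${\bm n}=M{\bm m}$ in the equivariance relation) and not, say, $M{\bm n}+B({\bm 0},r)$. A secondary subtlety, on the converse side, is that the sliding block code built from $\Phi$ a priori only produces configurations over the alphabet of $Y$, so the containment $\phi(X)\subseteq Y$ has to be read off from the hypothesis that $\phi$ is a map into $Y$ (together with the shift-invariance of $\phi(X)$ just noted); beyond these, everything is the routine bookkeeping of the classical Curtis--Hedlund--Lyndon argument.
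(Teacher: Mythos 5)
Your argument is correct: the forward direction (continuity plus compactness gives a uniform radius, then the twisted equivariance $\phi\circ S^{\bm m}=S^{M\bm m}\circ\phi$ with ${\bm m}=M^{-1}{\bm n}$ forces the window $M^{-1}{\bm n}+B({\bm 0},r)$) and the converse verification are exactly the standard Curtis--Hedlund--Lyndon bookkeeping adapted to the linear part $M$, and your remarks on surjectivity and on $\phi(X)\subseteq Y$ correctly patch the slight imprecision in the statement. The paper itself gives no proof here but defers to \cite[Theorem 2.7]{cabezas2021homomorphisms}, whose argument is this same classical one, so your proposal matches the intended proof.
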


This means, for any $\GL$-epimorphism $\phi$, we can define a \emph{radius} (also denoted by $r(\phi)$), as the infimum of $r\in\NN$ such that we can define a $B({\bm 0},r)$-block map which induces it. In the case $r(\phi)=0$, we say that $\phi$ is induced by a \emph{letter-to-letter map}.

\subsubsection{Substitutive subshifts}\label{sec:SubstSubshift} 
We provide a brief overview of multidimensional substitutive subshifts of constant-shape that will be used throughout this article. We refer to \cite{cabezas2023homomorphisms} for basic properties on this topic, where we follow the same notations. Let $L\in \mathcal{M}_{d}(\Z)$ be an integer expansion matrix, $F\subseteq \Z^{d}$ be a fundamental domain of $L(\Z^{d})$ in $\Z^{d}$, i.e., a set of representative classes of $\Z^{d}/L(\Z^{d})$ (with ${\bm 0}\in F$) and $\A$ be a finite alphabet. A \emph{constant-shape substitution} is a map $\zeta:\A\to\A^{F}$. We say that $F$ is the \emph{support} of the substitution. 
Since any element $\bm n \in \Z^d$ can be expressed uniquely as $\bm n = L(\bm j) + \bm f$, with $\bm j \in \Z^d$ and $\bm f \in F$, the substitution extends to $\A^{\Z^d}$ as 
$$ \zeta(x)_{L(\bm j) + \bm f} = \zeta(x_{\bm j})_{\bm f}.$$  

For any $n>0$, we define the $n$-th iteration of the substitution $\zeta^{n}:\A\to \A^{F_{n}}$ by induction $\zeta^{n+1}=\zeta\circ \zeta^{n}$, where the supports of these substitutions satisfy the recurrence $F_{n+1}=L(F_{n})+F_{1}$ for all $n\geq 1$. We will always assume that the sequence of supports $(F_{n})_{n>0}$ is \emph{F\o lner}, i.e., for all ${\bm n}\in \Z^{d}$ we have that
$$\lim\limits_{n\to \infty} \dfrac{|F_{n} \triangle (F_{n} +{\bm n})|}{|F_{n} |}=0.$$

The supports do not need to cover all the space. Nevertheless, up to adding a finite set and taking its images under power of the expansion matrix $L$, they cover the space. This property is explained in the following proposition. It is similar to the notion of remainder in numeration theory and will be technically useful.
\begin{proposition}\cite[Proposition 2.10]{cabezas2023homomorphisms}\label{FiniteSubsetFillsZd}
    	Let $\zeta$ be a constant-shape substitution. Then, the set $K_{\zeta}=\bigcup\limits_{m>0}((\Id_{\R^{d}} -L^{m})^{-1}(F_{m})\cap \Z^{d})$ is finite and satisfies		$$\bigcup\limits_{n\geq 0} L^{n}(K_{\zeta})+F_{n}=\Z^{d},$$
    	
    	\noindent using the notation $F_{0}=\{0\}$.
    \end{proposition}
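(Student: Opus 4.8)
The plan is to prove finiteness first, then the covering identity. For finiteness, fix $m>0$ and consider the set $(\id - L^m)^{-1}(F_m)\cap \Z^d$. Since $L$ is an expansion matrix, every eigenvalue $\lambda$ of $L^m$ satisfies $|\lambda|>1$, so $1$ is not an eigenvalue of $L^m$ and $\id - L^m$ is invertible over $\Q$; moreover $\|(\id - L^m)^{-1}\|\le (\,\inf_{|\lambda|>1}|\lambda^m-1|\,)^{-1}\cdot C_d$ for a dimensional constant, and this bound tends to $0$ as $m\to\infty$ because the eigenvalues of $L^m$ grow. Meanwhile $F_m$ is contained in a ball whose radius grows only like $\|L\|^m$ up to a constant (using the recurrence $F_{n+1}=L(F_n)+F_1$). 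The key point is to compare these two growth rates: writing $\rho = \min_{\lambda}|\lambda|>1$ for the spectral radius lower bound and noting $\|L\|$ can be taken arbitrarily close to the spectral radius $\max_\lambda|\lambda|$ after a suitable change of norm — actually the cleanest route is to observe that $(\id-L^m)^{-1}F_m = (\id - L^m)^{-1}(F_m)$ and $F_m \subseteq \operatorname{diam}$-ball, so $(\id-L^m)^{-1}(F_m)$ is contained in a ball of radius $O(\|L^m\|\cdot\|(\id-L^m)^{-1}\|)$, which is bounded uniformly in $m$. Hence $\bigcup_{m>0}(\id-L^m)^{-1}(F_m)$ is a bounded subset of $\R^d$, and intersecting with $\Z^d$ gives a finite set $K_\zeta$.

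For the covering identity, the inclusion $\bigcup_{n\ge 0}L^n(K_\zeta)+F_n \subseteq \Z^d$ is immediate since each summand is an integer vector. For the reverse inclusion, I would fix $\bm n\in\Z^d$ and use the fundamental-domain structure: since $F=F_1$ is a fundamental domain of $L(\Z^d)$ in $\Z^d$, every $\bm n$ can be written uniquely as $\bm n = L(\bm j_1) + \bm f_1$ with $\bm f_1\in F_1$; iterating, after $k$ steps one gets $\bm n = L^k(\bm j_k) + \bm g_k$ with $\bm g_k\in F_k$ (using $F_{n+1}=L(F_n)+F_1$, one checks the partial sums $\bm f_1 + L\bm f_2 + \cdots + L^{k-1}\bm f_k$ land in $F_k$). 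The sequence $\bm j_k$ satisfies $L\bm j_{k+1} = \bm j_k - \bm f_{k+1}$-type recursions, and since $L$ is expanding, $\|\bm j_k\|$ contracts: more precisely $\bm j_k \to \bm 0$ is not quite right, but $\bm j_k$ eventually lands in a fixed bounded region. One then shows that for $k$ large enough, $\bm j_k$ lies in $K_\zeta$: indeed if $\bm j_{k+1} = \bm j_k$ for the eventually-periodic tail (which happens once $\bm j_k$ enters the finite bounded region and the digit expansion becomes eventually periodic, say with period dividing some $m$ and eventual value $\bm j$), then $\bm j$ satisfies $L^m(\bm j) + (\text{element of }F_m) = \bm j$, i.e. $\bm j \in (\id - L^m)^{-1}(F_m)\cap\Z^d \subseteq K_\zeta$. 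Then $\bm n = L^k(\bm j_k) + \bm g_k$ with $\bm j_k\in K_\zeta$ exhibits $\bm n \in L^k(K_\zeta)+F_k$.

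I expect the main obstacle to be the second part: making precise the claim that the ``digit expansion'' $\bm j_k$ is eventually periodic and that its periodic value lies in $K_\zeta$. The subtlety is that $\bm j_k$ need not literally stabilize or even be periodic for arbitrary $\bm n$; one must argue that $\bm j_k$ enters a fixed finite set (by the contraction estimate: $\|\bm j_{k+1}\| \le \|L^{-1}\|(\|\bm j_k\| + \operatorname{diam}(F_1))$, which forces $\limsup_k\|\bm j_k\|$ below a fixed constant since $\|L^{-1}\|<1$ in a suitable norm), hence takes finitely many values, hence some value recurs, and from a recurrence $\bm j_{k} \mapsto \bm j_{k+m} = \bm j_k$ one extracts the fixed-point equation placing $\bm j_k$ in the defining set of $K_\zeta$. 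Handling the norm carefully — choosing an adapted norm in which $L$ is a genuine expansion ($\|L^{-1}\|<1$) — is the technical heart; the rest is bookkeeping with the Følner supports $F_n$ and the fundamental-domain decomposition.
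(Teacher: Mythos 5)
Your second half (the covering identity) is correct and is the natural argument: the base-$L$ digit expansion $\bm n = L^k(\bm j_k)+\bm g_k$ with $\bm g_k\in F_k$, the contraction estimate $\|\bm j_{k+1}\|\le\|L^{-1}\|(\|\bm j_k\|+c)$ in an adapted norm forcing $\bm j_k$ into a fixed finite set, and the pigeonhole recurrence $\bm j_{k+m}=\bm j_k$, which yields $(\id-L^{m})\bm j_k\in F_m$, i.e.\ $\bm j_k\in K_{\zeta}$, and hence $\bm n\in L^{k}(K_{\zeta})+F_{k}$. (The present paper does not reprove this statement; it only cites \cite{cabezas2021homomorphisms}, so there is no in-text proof to compare against.)

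The genuine gap is in the finiteness half. The step you call the cleanest route --- that $(\id-L^{m})^{-1}(F_{m})$ lies in a ball of radius $O\bigl(\|L^{m}\|\cdot\|(\id-L^{m})^{-1}\|\bigr)$ ``bounded uniformly in $m$'' --- is false whenever the eigenvalues of $L$ have different moduli, and no change of norm can repair it: for $L=\diag(2,10)$ every operator norm satisfies $\|L^{m}\|\ge\rho(L^{m})=10^{m}$ and $\|(\id-L^{m})^{-1}\|\ge\max_{\lambda}|1-\lambda^{m}|^{-1}=(2^{m}-1)^{-1}$, so the product grows like $5^{m}$. The decay of $\|(\id-L^{m})^{-1}\|$ is governed by the smallest eigenvalue modulus while $\diam(F_{m})$ grows like the largest, so the crude diameter bound cannot give uniform boundedness. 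The conclusion is still true, but you must use the digit structure of $F_{m}$ rather than its diameter: writing $f=\sum_{k=0}^{m-1}L^{k}f_{k}$ with $f_{k}\in F_{1}$ and $(\id-L^{m})^{-1}=-\sum_{j\ge1}L^{-jm}$, every point of $(\id-L^{m})^{-1}(F_{m})$ equals $-\sum_{i\ge1}L^{-i}g_{i}$ with $g_{i}\in F_{1}$ (each negative power of $L$ occurring exactly once), hence lies in the fixed ball of radius $c\sum_{i\ge1}\|L^{-i}\|$, finite in a norm with $\|L^{-1}\|<1$; intersecting with $\Z^{d}$ gives finiteness. Equivalently, observe that $K_{\zeta}$ is precisely the set of periodic points of the digit map you already use in the covering half, and your own contraction estimate confines every periodic orbit to the absorbing ball, so the correct finiteness proof is a one-line byproduct of your second argument.
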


The \emph{language} of a substitution is the set of all patterns that occur in $\zeta^{n}(a)$, for some $n>0$, $a\in \A$, i.e.,
$$\mathcal{L}_{\zeta}=\{\texttt{p}\colon \texttt{p}\sqsubseteq \zeta^{n}(a),\ \text{for some }n>0,\ a\in \A\}.$$
A substitution $\zeta$ is called \emph{primitive} if there exists a positive integer $n>0$, such that for every $a,b\in \A$, $b$ occurs in $\zeta^{n}(a)$.
If $\zeta$ is a primitive constant-shape substitution, the existence of \emph{periodic points} is well-known, i.e., there exists at least one point $x_{0}\in X_{\zeta}$ such that $\zeta^{p}(x_{0})=x_{0}$ for some $p>0$. In the primitive case, the subshift is preserved by replacing the substitution with a power of it; that is, $X_{\zeta^{n}}$ is equal to $X_{\zeta}$ for all $n>0$. Consequently, we may assume the existence of at least one fixed point. In other words, there exists a point $x \in X_{\zeta}$ such that $x = \zeta(x)$. As in the one-dimensional case, it is important to note that the number of periodic points (or, if we consider proper iterations, the number of fixed points) is finite.

The substitutive subshift $(X_{\zeta},S,\Z^{d})$ is the topological dynamical system, where $X_{\zeta}$ is the set of all sequences $x\in \A^{\Z^{d}}$ such that every pattern occurring in $x$ is in $\mathcal{L}_{\zeta}$. When the substitutive subshift $(X_{\zeta},S,\Z^{d})$ is aperiodic, the substitution satisfies a combinatorial property called \emph{recognizability} \cite{cabezas2023homomorphisms,solomyakrecognizability}.

\begin{definition}
	Let $\zeta$ be a primitive substitution and $x\in X_{\zeta}$ be a fixed point. We say that $\zeta$ is \emph{recognizable on $x$} if there exists some constant $R>0$ such that for all ${\bm i}, {\bm j}\in \Z^{d}$,
	$$x|_{B(L_{\zeta}({\bm i}),R)\cap \Z^{d}}=x|_{B({\bm j},R)\cap \Z^{d}} \implies (\exists {\bm k}\in \Z^{d}) (({\bm j}=L_{\zeta}({\bm k}))\wedge (x_{{\bm i}}=x_{{\bm k}})).$$
\end{definition}

The recognizability property implies some topological and combinatorial properties of the substitutive subshift that we summarize in the following:
\begin{itemize}
    \item The substitutive subshift $(X_{\zeta},S,\Z^{d})$ is aperiodic,
    \item for any $n>0$, the map $\zeta^{n}:X_{\zeta}\to \zeta^{n}(X_{\zeta})$ is a homeomorphism,
    \item for any $n>0$, every $x\in X_{\zeta}$ can be written in a unique way $x=S^{{\bm f}}\zeta^{n}(x_{1})$ with ${\bm f}\in F_{n}$ and $x_{1}\in X_{\zeta}$.
\end{itemize}   
It follows that the map $\pi_n \colon X_\zeta \to F_n $ by $\pi_n(x) = {\bm f} $ when $x \in S^{\bm f} \zeta^n(X_\zeta)$ is well-defined, continuous and can be extended to a factor map $\pi \colon (X_\zeta,S,\Z^{d}) \to (\overleftarrow{\Z^{d}}_{(L^{n})},{\bm +},\Z^{d})$ defined as $\pi (x) = (\pi_n(x))_n$ \cite{cabezas2023homomorphisms}.

\section{Description of the linear representation group of odometer systems}\label{sec:HomoZ2Odometers}

In this section, we describe the linear representation group and its elements for several odometers, specifically the universal and constant-base $\Z^2$-odometer systems. It is worth noting that we did not find a similar result in the existing literature.
One of our main tools involves the characterization \eqref{normalizercondition1} for matrices in  the  linear representation group of an odometer system $\overleftarrow{\Z^{d}}_{(Z_{n})}$.  

We begin by presenting an equivalent formulation of \eqref{normalizercondition1} in terms of arithmetical equations. For any given $n\in \NN$, let $L_{n}\in \mathcal{M}(d,\Z)$ be a matrix such that $L_{n}(\Z^{d})=Z_{n}$. It is important to note that this matrix is unique, up to a (right) composition with a matrix in $\GL$. Then, the condition  \eqref{normalizercondition1} is equivalent to: for all $n\in \NN$, there exists $m_{M}(n)\in \NN$ such that $L_{n}^{-1}ML_{m_{M}(n)}$ is an endomorphism in $\Z^{d}$. Since $\det(L)L^{-1}=\adj(L)$, where $\adj(L)$ is the \emph{adjugate matrix} of $L$, we can express \eqref{normalizercondition1} equivalently as:
	\begin{equation}\label{normalizercondition2}\tag{NC 2}
		\forall n\in \NN, \exists m_{M}(n)\in\NN,\ \adj(L_{n})ML_{m_{M}(n)} \equiv 0\ (\text{mod}\ \det(L_{n})).
	\end{equation}

\subsection{The universal $\Z^{d}$-odometer case}\label{sec:UnivOdometer} 
Let $(\Gamma_{n})_{n\in \NN}$ be an enumeration of all finite-index subgroups of $\Z^{d}$. We define the \emph{universal} $d$-\emph{dimensional odometer system} as follows: Start with $\Lambda_{0}=\Gamma_{0}$, and for any $n\geq1$ set $\Lambda_{n}=\Lambda_{n-1}\cap \Gamma_{n}$. Since the intersection of finite-index subgroups remains a finite-index subgroup, we can define the universal $d$-dimensional odometer as $\overleftarrow{\Z^{d}}_{(\Lambda_{n})}$. This odometer is universal in the sense that, by \cref{CharacterizationFactorOdometer}, any odometer system is a topological factor of the universal odometer. For example, the universal 1-dimensional odometer is equal to $\overleftarrow{\Z}_{(n!\Z)}$. With respect to its linear representation group, \eqref{normalizercondition2} leads to the following result.
	
\begin{proposition}\label{SymmetrySemigroupUniversalOdometer}
The linear representation group of the $d$-dimensional universal odometer is equal to $\GL$.
\end{proposition}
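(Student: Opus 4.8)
The plan is to show that every matrix $M\in\GL$ satisfies the Normalizer condition \eqref{normalizercondition1} for the universal odometer $\overleftarrow{\Z^{d}}_{(\Lambda_{n})}$, since by \cref{LemmaNessesaryConditionNormalizerOdometer} (and the discussion following it) this is exactly the condition for $M$ to belong to the linear representation group $\vec N(\overleftarrow{\Z^{d}}_{(\Lambda_n)})$; the reverse inclusion $\vec N(\overleftarrow{\Z^{d}}_{(\Lambda_n)})\subseteq \GL$ is immediate from the definition. So fix $M\in\GL$ and fix $n\in\NN$. We must produce $m=m(n)$ with $M\Lambda_{m}\leqslant \Lambda_{n}$.

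The key observation is that $M\in\GL$ acts on $\Z^{d}$ as an automorphism, so for any finite-index subgroup $\Gamma\leqslant\Z^{d}$ the image $M^{-1}\Gamma$ (equivalently $M^{-1}(\Gamma)$) is again a finite-index subgroup of $\Z^{d}$, of the same index. Apply this to $\Gamma=\Lambda_{n}$: the subgroup $M^{-1}\Lambda_{n}$ is finite-index, hence appears somewhere in the enumeration $(\Gamma_{k})_{k\in\NN}$, say $M^{-1}\Lambda_{n}=\Gamma_{k_0}$. By construction of the universal odometer, $\Lambda_{k_0}=\Lambda_{k_0-1}\cap\Gamma_{k_0}\leqslant\Gamma_{k_0}=M^{-1}\Lambda_{n}$. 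Setting $m=\max\{n,k_0\}$ and using that the sequence $(\Lambda_k)$ is nested and decreasing, we get $\Lambda_{m}\leqslant\Lambda_{k_0}\leqslant M^{-1}\Lambda_{n}$, hence $M\Lambda_{m}\leqslant\Lambda_{n}$. This establishes \eqref{normalizercondition1} for $M$, so $M\in\vec N(\overleftarrow{\Z^{d}}_{(\Lambda_n)})$, and since $M$ was arbitrary we conclude $\vec N(\overleftarrow{\Z^{d}}_{(\Lambda_n)})=\GL$.

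There is essentially no obstacle here; the only point requiring a moment's care is the elementary fact that an element of $\GL$ sends finite-index subgroups to finite-index subgroups of the same index (it is an automorphism of $\Z^{d}$, so it is a bijection on the lattice of subgroups preserving index), which is what forces $M^{-1}\Lambda_n$ to occur in the exhaustive enumeration $(\Gamma_k)$. One could alternatively phrase the whole argument via \eqref{normalizercondition2}: write $\Lambda_n=L_n(\Z^d)$, note $M L_n$ has the same column lattice as some $L_{k_0}$ in the enumeration (up to right multiplication by an element of $\GL$), and deduce $\adj(L_n)M L_{m}\equiv 0\ (\mathrm{mod}\ \det L_n)$ for $m$ large; but the subgroup-theoretic formulation above is the cleanest.
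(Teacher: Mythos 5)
Your argument is correct and is essentially the paper's: both proofs verify the normalizer condition by using that the universal odometer's nested sequence $(\Lambda_m)$ eventually lies inside any prescribed finite-index subgroup of $\Z^d$, which is exactly where universality enters. The only (cosmetic) difference is that you work with \eqref{normalizercondition1} and the $M$-dependent subgroup $M^{-1}\Lambda_n$, whereas the paper works with \eqref{normalizercondition2} and chooses $m(n)$ with $\Lambda_{m(n)}\leqslant \det(L_n)\Z^{d}$, which makes the same $m(n)$ work uniformly for every $M\in\GL$.
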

	
\begin{proof}
Consider $L_{n}\in \mathcal{M}(d,\Z)$ such that $L_{n}(\Z^{d})=\Lambda_{n}$. A matrix $M\in \GL$ is in $\vec{N}(\overleftarrow{\Z^{d}}_{(\Lambda_{n})})$ if and only if $M$ satisfies \eqref{normalizercondition2}. Now, for any $n\in \NN$, we can choose $m(n)\in\NN$ large enough such that $\Lambda_{m(n)}\leqslant \det(L_{n})\Z^{d}$. This implies that $\adj(L_{n})ML_{m(n)}\equiv 0\ (\text{mod}\ \det(L_{n}))$ for any matrix $M\in \GL$. We then conclude that $\vec{N}(\overleftarrow{\Z^{d}}_{(\Lambda_{n})})=\GL$.
	\end{proof}

\subsection{The constant-base $\Z^{2}$-odometer case}\label{sec:ConstantBaseOdometer} 

We will be  mainly interested in $\GL$-endomorphisms of constant-base odometers, \ie, when $Z_n=  L^n (\Z^d)$ for each $n\in \NN$ and for some expansion matrix $L$. In this case we get the following direct corollary of \cref{LemmaNessesaryConditionNormalizerOdometer} and the condition \eqref{normalizercondition2}. 

	\begin{corollary} \label{CorollariesNormalizerConditionOdometer} 
	Let $L \in \mathcal{M}(d,\Z)$ be an expansion matrix. 
		\begin{enumerate}
        \item A matrix $M\in \GL$	is in $\vec{N}(\overleftarrow{\Z^{d}}_{(L^{n})})$, if and only if 
		\begin{equation}\label{normalizercondition3}\tag{NC 3}
		\forall n\in \NN, \exists m(n)\in\NN,\ \adj(L^n)ML^{m(n)} \equiv 0\ ({mod}\ \det(L^n)).
	\end{equation}
		
		    \item If $M\in \GL$ commutes with some power of the expansion matrix $L$, then $M$ is in the linear representation semigroup $\vec{N}(\overleftarrow{\Z^{d}}_{(L^{n})})$.
		    
			\item For any $M\in \GL$, we have that $\vec{N}(\overleftarrow{\Z^{d}}_{(ML^{n}M^{-1})})=M\vec{N}(\overleftarrow{\Z^{d}}_{(L^{n})})M^{-1}$.
		\end{enumerate}
	\end{corollary}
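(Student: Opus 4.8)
The plan is to verify each of the three statements in \cref{CorollariesNormalizerConditionOdometer} by direct computation from \cref{LemmaNessesaryConditionNormalizerOdometer} and the reformulation \eqref{normalizercondition2}. For part (1), I would simply substitute $Z_n = L^n(\Z^d)$ into \eqref{normalizercondition1}: since $L^n(\Z^d)$ is generated by the matrix $L^n$, the matrix $L_n$ in \eqref{normalizercondition2} can be taken to be $L^n$, and since $\det(L^n) = \det(L)^n$ and $\adj(L^n) = \det(L)^n (L^n)^{-1}$, the condition \eqref{normalizercondition2} becomes exactly \eqref{normalizercondition3}. This is immediate and requires no real argument beyond unwinding notation.

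For part (2), suppose $ML^k = L^k M$ for some $k \geq 1$. I would check \eqref{normalizercondition3} directly: given $n$, choose $m(n) = nk$ (or more carefully, a multiple of $k$ at least $n$). Then $\adj(L^n) M L^{m(n)}$; writing $m(n) = n + (m(n)-n)$ and using that $M$ commutes with $L^k$, hence with $L^{m(n)}$ when $k \mid m(n)$... actually the cleanest route: since $M$ commutes with $L^k$, pick $m(n)$ a multiple of $k$ with $m(n) \geq n$; then $M L^{m(n)} = L^{m(n)} M$, so $\adj(L^n) M L^{m(n)} = \adj(L^n) L^{m(n)} M = \adj(L^n) L^n L^{m(n)-n} M = \det(L)^n L^{m(n)-n} M \equiv 0 \pmod{\det(L)^n}$, using $\adj(L^n) L^n = \det(L^n) \Id = \det(L)^n \Id$. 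So \eqref{normalizercondition3} holds and $M \in \vec{N}(\overleftarrow{\Z^{d}}_{(L^{n})})$ by part (1). Alternatively one notes $M$ integer $\Rightarrow$ trivially it satisfies \eqref{normalizercondition3}. I would present the direct congruence check.

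For part (3), I would observe that $\overleftarrow{\Z^{d}}_{(ML^{n}M^{-1})}$ is the constant-base odometer with subgroups $(ML^nM^{-1})(\Z^d) = M(L^n(\Z^d)) = M Z_n$ where $Z_n = L^n(\Z^d)$ (this uses $M \in \GL$, so $M(\Z^d) = \Z^d$). Then conjugation by $M$ gives a topological conjugacy — indeed the map $\overleftarrow{g} \mapsto M\overleftarrow{g}$ (interpreted componentwise, noting $M$ induces isomorphisms $\Z^d/Z_n \to \Z^d/MZ_nM^{-1}\cdots$, or more simply the functoriality of the inverse limit) — between $\overleftarrow{\Z^{d}}_{(L^n)}$ and $\overleftarrow{\Z^{d}}_{(ML^nM^{-1})}$. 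For the precise statement about linear representation semigroups, I would argue at the level of matrices: a matrix $N \in \GL$ lies in $\vec{N}(\overleftarrow{\Z^{d}}_{(ML^nM^{-1})})$ iff $N$ satisfies \eqref{normalizercondition1} for the subgroups $MZ_nM^{-1}(\Z^d) = MZ_n$, i.e. for all $n$ there is $m$ with $N(MZ_{m}) \leq MZ_n$, equivalently $M^{-1}NM Z_m \leq Z_n$, equivalently $M^{-1}NM \in \vec{N}(\overleftarrow{\Z^{d}}_{(L^n)})$; hence $\vec{N}(\overleftarrow{\Z^{d}}_{(ML^nM^{-1})}) = M\vec{N}(\overleftarrow{\Z^{d}}_{(L^n)})M^{-1}$.

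None of the three steps presents a genuine obstacle; the ``hard part'' is only bookkeeping — being careful that $L_n = L^n$ is a legitimate choice (unique up to right multiplication by $\GL$, which does not affect the conditions) in part (1), and in part (3) correctly identifying the subgroup sequence of the conjugated odometer as $MZ_n$ rather than $MZ_nM^{-1}(\Z^d)$ and tracking the conjugation through the membership condition \eqref{normalizercondition1}. I would state part (3) via the subgroup-inclusion form \eqref{normalizercondition1} since it makes the conjugation transparent and avoids any need to manipulate adjugates.
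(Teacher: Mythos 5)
Your proposal is correct and follows the same route the paper intends: the corollary is presented there as a direct consequence of \cref{LemmaNessesaryConditionNormalizerOdometer} and \eqref{normalizercondition2}, obtained exactly as you do by taking $L_n=L^{n}$ (so that \eqref{normalizercondition2} becomes \eqref{normalizercondition3}), checking the congruence when $M$ commutes with a power of $L$, and transporting \eqref{normalizercondition1} through the conjugated subgroup sequence $ML^{n}M^{-1}(\Z^d)=ML^{n}(\Z^d)$. The only blemish is your parenthetical remark that ``$M$ integer $\Rightarrow$ trivially it satisfies \eqref{normalizercondition3}'', which taken literally is false (otherwise every matrix of $\GL$ would belong to the linear representation group, contradicting \cref{GeneralTheoremDifferentCasesNormalizer}); what is trivially true is that $\det(L)^{n}L^{m(n)-n}M\equiv 0\ (\text{mod}\ \det(L)^{n})$ because $L^{m(n)-n}M$ is an integer matrix, and since your presented congruence check uses precisely this, the argument you actually give stands.
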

	
In the next theorem, we present the structure of the linear representation group of constant-base $\Z^{2}$-odometer systems based on computable arithmetical conditions of the expansive matrix $L$. 
Within this family, we obtain a  bifurcation phenomenon at the level of the linear representation group, depending on arithmetic relations of the coefficients of the matrix $L$. To describe the different cases, we introduce some additional notations. For every integer with $|n|>1$, the \emph{radical} $\rad(n)$ \emph{of} $n$ is defined as the product of the distinct prime numbers that divide $n$.
To describe algebraically the normalizer, we will say that a group $G$ is \emph{virtually} $\Z$ when it admits an infinite cyclic subgroup of finite index. 
The \emph{centralizer} $\Cent_{\GLtwo}(L)$ \emph{of a matrix} $L$ \emph{in} $\GLtwo$ is defined as the subgroup consisting of all matrices in $\GLtwo$ commuting with $L$. Recall that, as established in \cref{CorollariesNormalizerConditionOdometer}, the centralizer $\Cent_{\GLtwo}(L)$ is always a subgroup of $\vec{N}(\overleftarrow{\Z^{d}}_{(L^{n})})$.

\begin{theorem}\label{GeneralTheoremDifferentCasesNormalizer}
	Let $L \in \mathcal{M}(2,\Z)$ be an integer expansion matrix.
	
	\begin{enumerate}[label=(\arabic*), ref=(\arabic*)]
		\item\label{RadDividesEverythingCase} If $\rad(\det(L))$ divides $\trace(L)$, then the linear representation group $\vec{N}(\overleftarrow{\Z^{2}}_{(L^n)})$ is equal to $\GLtwo$.
		
		\item Otherwise
		
		\begin{enumerate}[label=(\alph*),ref=(\alph*)]
			\item\label{it:case2a} If the spectrum of the matrix $L$ is disjoint from the integers, then the linear representation group $\vec{N}(\overleftarrow{\Z^{2}}_{(L^n)})$ is the centralizer $\Cent_{GL(2,\Z)}(L)$. 
			
			Moreover, if the spectrum of $L$ is disjoint from the real line, then $\vec{N}(\overleftarrow{\Z^{2}}_{(L^n)})$ is a finite group.
			
			\item\label{it:case2b} When the spectrum of $L$ contains an integer value, the linear representation group $\vec{N}(\overleftarrow{\Z^{2}}_{(L^n)})$ is finite or virtually $\Z$. 
			
			More precisely, under explicit arithmetical conditions of $L$, $\vec{N}(\overleftarrow{\Z^{2}}_{(L^n(\Z^{2}))})$  isomorphic to $\Z/ 2\Z$ or $\Z^{2}/(2\Z\times2\Z)$, or its commutator subgroup is cyclic  and  in this last case its abelianization is finite. 
		\end{enumerate}
	\end{enumerate}
\end{theorem}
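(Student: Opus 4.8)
The plan is to reduce everything to the arithmetical criterion \eqref{normalizercondition3}, i.e.\ a matrix $M\in\GLtwo$ lies in $\vec{N}(\overleftarrow{\Z^{2}}_{(L^n)})$ iff for every $n$ there is $m(n)$ with $\adj(L^n)\,M\,L^{m(n)}\equiv 0\pmod{\det(L)^n}$. Since $\adj(L^n)=\det(L)^n L^{-n}$, this says $L^{-n}ML^{m(n)}\in\cM_2(\Z)$ for all $n$. I would first normalize $L$ up to $\GLtwo$-conjugacy (allowed by \cref{CorollariesNormalizerConditionOdometer}\,(3)) into a convenient form — e.g.\ Hermite/Smith-type normal form, or when an eigenvalue is an integer, an upper-triangular form $\begin{pmatrix}\lambda & b\\ 0 & \mu\end{pmatrix}$ — and then analyze the congruences coefficient by coefficient. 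A key preliminary reduction (parallel to \cref{lem:NCUniform} in the commented-out section) is that the exponent $m(n)$ can be taken of a controlled form, because by Cayley–Hamilton $L^m$ modulo $\det(L)^n$ is eventually periodic in $m$, so one only needs to test finitely many residues; this turns the infinitely many conditions into a single $p$-adic/congruence condition on the entries of $M$.

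For case \ref{RadDividesEverythingCase}, I would show directly that if $\rad(\det L)\mid\trace(L)$ then $L$ itself is $\equiv 0$ modulo $\rad(\det L)$ up to the scalar part — more precisely, using Cayley–Hamilton $L^2=\trace(L)L-\det(L)\Id$, one proves $L^{2k}\equiv (\text{scalar})\,\Id$ or $L^{k}\equiv 0\pmod{\det(L)^{n}}$ for suitable $k$ depending on $n$; hence $\adj(L^n)M L^{m(n)}$ is automatically $\equiv0$ for \emph{every} integer matrix $M$, giving $\vec N=\GLtwo$. The arithmetic heart here is a lemma about the $p$-adic valuation of the entries of $L^k$ when $p\mid\det L$ and $p\mid\trace L$: such powers gain valuation without bound. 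For case \ref{it:case2a}, when $\spec(L)\cap\Z=\emptyset$, I would argue that the congruence forces $M$ to preserve the rational splitting induced by the (quadratic, irreducible over $\Q$) characteristic polynomial, so $M$ lies in the commutative $\Q$-algebra $\Q[L]$; combined with $M\in\GLtwo$ and the failure of the divisibility condition, this pins $M$ down to commuting with $L$ itself, i.e.\ $\vec N=\Cent_{\GLtwo}(L)$. If moreover $\spec(L)$ is non-real, $L$ generates an order in an imaginary quadratic field, whose unit group is finite (roots of unity), so $\Cent_{\GLtwo}(L)$ is finite.

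For case \ref{it:case2b}, put $L\sim\begin{pmatrix}\lambda&b\\0&\mu\end{pmatrix}$ with $\lambda\in\Z$; I would split into subcases according to whether $\mu\in\Z$ (both eigenvalues rational) or $\mu$ irrational, and according to arithmetic relations among $\lambda,\mu,b$ and their radicals. Writing $M=\begin{pmatrix}x&y\\z&w\end{pmatrix}$, the congruence $L^{-n}ML^{m}\in\cM_2(\Z)$ becomes a system of divisibility conditions; the off-diagonal entry $z$ (the one "against" the flag) is the constrained one and typically must vanish or be highly divisible, while $x,w$ range over a finite set of residues and $y$ is essentially free modulo a subgroup. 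This exhibits $\vec N$ as an extension of a finite group (the diagonal/antidiagonal part — giving $\Z/2\Z$ or $(\Z/2\Z)^2$ in the symmetric cases) by a cyclic group coming from the unipotent/shear direction (the free-$y$ direction, giving the "virtually $\Z$" and "cyclic commutator subgroup" conclusions). I expect this last case to be the main obstacle: the bookkeeping of which arithmetic relation among $\lambda,\mu,b,\rad(\det L),\rad(\mu/\lambda)$ produces which group is genuinely case-heavy, and ensuring the list of subcases is exhaustive — and that each claimed group is realized and not merely an upper bound — is where the real work lies. The other cases reduce to clean $p$-adic valuation lemmas and the structure of centralizers in $\GLtwo$.
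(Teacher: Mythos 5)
Your route is essentially the paper's: everything is funneled through the congruence criterion \eqref{normalizercondition3}, case \ref{RadDividesEverythingCase} follows from Cayley--Hamilton exactly as in the paper, case (2)\ref{it:case2a} is to be settled by showing that $M$ must commute with $L$, and case (2)\ref{it:case2b} by a triangular case analysis as in \cref{uppertriangularcasenormalizer}. Two of your variants are genuine improvements: deducing finiteness in (2)\ref{it:case2a} from the fact that $\Cent_{\GLtwo}(L)$ consists of the determinant-$\pm1$ elements of an order in an imaginary quadratic field (hence of its finite unit group) is cleaner than the paper's explicit computation; and reducing all of (2)\ref{it:case2b} to a $\GLtwo$-triangular form is legitimate (an integer eigenvalue yields a primitive integer eigenvector, which extends to a $\Z$-basis of $\Z^{2}$) and would even shortcut the paper's separate treatment of an eigenvector matrix $P$ with $|\det P|>1$. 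Note in passing that your subcase ``$\mu$ irrational'' is vacuous, since $\mu=\trace(L)-\lambda\in\Z$.

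However, the two steps that carry the actual content of the theorem are asserted rather than proved. In (2)\ref{it:case2a}, the phrase ``the congruence forces $M$ to preserve the rational splitting, so $M$ lies in $\Q[L]$'' is not an argument: there is no rational splitting (the characteristic polynomial is irreducible over $\Q$), and the whole difficulty is to show that the infinitely many congruences annihilate the component of $M$ transverse to $\sspan\{\Id,L\}$. The paper does this by passing to $\overline{M}=rM-m_{21}L-(rm_{11}-pm_{21})\Id$ (which has a zero column), choosing --- possible precisely because $\rad(\det L)$ does not divide $\trace(L)$ --- a prime $t$ dividing $\det(L)$ but not $s$, and proving that the set of integer vectors $v$ with $\adj(L^{n})v\equiv 0\ (\text{mod}\ t^{n})$ for all $n$ is an $\adj(L)$-invariant $\Z$-module of rank at most $1$, hence of rank $0$ because $L$ has no integer eigenvalue; some mechanism of this kind is missing from your outline. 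In (2)\ref{it:case2b}, the ``explicit arithmetical properties'' and the identifications $\Z/2\Z$, $(\Z/2\Z)^{2}$, virtually $\Z$, finite abelianization with cyclic commutator are exactly the case analysis you defer, and your guiding heuristic is wrong in one regime: when $\rad(s)\mid p$ and $\rad(p)\nmid s$ the lower-left entry $m_{21}$ is not forced to vanish at all --- it is a free integer parameter, tied to the other entries by $(p-s)^{2}m_{12}=m_{21}q^{2}+(p-s)(m_{11}-m_{22})q$ --- whereas $m_{21}=0$ is forced only when $\rad(s)\nmid p$. So the proposal reproduces the paper's strategy, but omits the arguments that actually yield the classification.
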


Along the proof, the group structure of $\vec{N}(\overleftarrow{\Z^{2}}_{(L^n(\Z^{2}))})$ is specified in terms of the arithmetical properties of the coefficient of $L$. 
 
The following examples illustrate the different cases of \cref{GeneralTheoremDifferentCasesNormalizer} according to the expansion matrix $L$.

\begin{example}[Different results for \cref{GeneralTheoremDifferentCasesNormalizer}]\label{ExamplesGeneralTheoremNormalizerGroupOdometer}
	\begin{enumerate}[label=(\arabic*), ref=(\arabic*)]
	    \item As we will see in the proof of \cref{GeneralTheoremDifferentCasesNormalizer}, the case \ref{RadDividesEverythingCase} can be easily generalized for higher dimensions in the following way: If $\rad(\det(L))$ divides every coefficient of the characteristic polynomial of $L$, then the linear representation semigroup $\vec{N}(\overleftarrow{\Z^{d}}_{(L^{n})})$ is equal to $\GL$. In particular, if $L=pM$, with $p\in \Z$ large enough so that the eigenvalues of $L$ have absolute value strictly greater than $1$ and $M\in \GL$, then the linear representation group $\vec{N}(\overleftarrow{\Z^{d}}_{L^{n}})$ is $\GL$.

		\item The matrix $L_{\theLvariable}=\begin{pmatrix}
			2 & -1 \\ 1 & 5
		\end{pmatrix}$ illustrates the case  (2)\ref{it:case2a}: $\trace(L_{\theLvariable})=7$, $\det(L_{\theLvariable})=11$, and $L_{\theLvariable}$ has real eigenvalues (which are equal to $7/2\pm\sqrt{5}/2$). The matrices in $\vec{N}(\overleftarrow{\Z^{2}}_{(L_{\theLvariable}^{n})})$ are the ones commuting with $L_{\theLvariable}$, which is an infinite group containing $\begin{pmatrix}
			2&1\\-1&-1
		\end{pmatrix}$.
		\stepcounter{Lvariable}
		\item\label{ComplexEigenvalues}The matrix   $L_{\theLvariable}=\begin{pmatrix}
			2 & -1 \\ 1 & 3
		\end{pmatrix}$ also illustrates the case  (2)\ref{it:case2a} but with a spectrum disjoint from the real line: $\trace(L_{\theLvariable})=5$ and $\det(L_{\theLvariable})=7$, and $L_{\theLvariable}$ has complex eigenvalues $5/2\pm i\sqrt{3}/2$. The linear representation semigroup $\vec{N}(\overleftarrow{\Z^{2}}_{L_{\theLvariable}^{n}})$ is equal to $\Cent_{GL(2,\Z)}(L_{\theLvariable})$, which corresponds to the set
		$$\left\{\begin{array}{ccc}
			\begin{pmatrix}
				1 & 1 \\ -1 & 0
			\end{pmatrix},&\begin{pmatrix}
				-1 & -1 \\ 1 & 0
			\end{pmatrix},&\begin{pmatrix}
				0 & -1 \\ 1 & 1
			\end{pmatrix}\\	
			\begin{pmatrix}
				0 & -1 \\ 1 & -1
			\end{pmatrix},&\begin{pmatrix}
				1 & 0 \\ 0 & 1
			\end{pmatrix},&\begin{pmatrix}
				-1 & 0 \\ 0 & -1
			\end{pmatrix}\\
		\end{array}\right\}.$$
\stepcounter{Lvariable}
	\item The matrix $L_{\theLvariable}=\begin{pmatrix}
		6& 1\\0& 2
	\end{pmatrix}$ illustrates the  case (2)\ref{it:case2b}. This is an upper triangular matrix  which is not diagonalizable by $\GLtwo$. It is be proved that $\vec{N}(\overleftarrow{\Z}_{(L_{\theLvariable}^{n})})$ is conjugate to $\left\{\begin{pmatrix}m_{11} & m_{12}\\ 0 & m_{22}\end{pmatrix}\colon |m_{11}m_{22}|=1, m_{12}\in \Z\right\}$ via the matrix $\begin{pmatrix}
	    1 & 0\\4 & 1
	\end{pmatrix}$, so it is virtually $\Z$. It can be directly checked that the linear representation group $\vec{N}(\overleftarrow{\Z^{d}}_{(L^{n})})$ associated with a matrix $L$ diagonalizable by $\GLtwo$ also has the same group structure, being isomorphic to a set of invertible upper triangular matrices. 
\stepcounter{Lvariable}
		\item The matrix $L_{\theLvariable}=\begin{pmatrix}
			3& 1\\0& 7
		\end{pmatrix}$ also concerns the case (2)\ref{it:case2b}. This matrix has eigenvalues $3$ and $7$. In the proof of \cref{GeneralTheoremDifferentCasesNormalizer}  it is shown that a matrix $M$ is in $\vec{N}(\overleftarrow{\Z^{2}_{(L^{n}}})$ if and only if $M$ commutes with $L_{\theLvariable}$, so $\vec{N}(\overleftarrow{\Z^{2}}_{(L_{\theLvariable}^{n})})$ is isomorphic to $\Z/2\Z$.
\stepcounter{Lvariable}
		\item Our last example illustrating the case (2)\ref{it:case2b} is the  matrix $L_{\theLvariable}=\begin{pmatrix}
			2& 1\\0& 3
		\end{pmatrix}$ with  eigenvalues  $2$ and $3$. It is also shown that $\vec{N}(\overleftarrow{\Z^{2}}_{(L_{\theLvariable}^{n})})=\Cent_{\GLtwo}(L_{\theLvariable})$, which is isomorphic to $(\Z/2\Z)^2$.
		
	\end{enumerate}
\end{example}

\stepcounter{Lvariable}
\begin{remark}\label{rem:compatibilityCounterExample} Note that
 \cref{GeneralTheoremDifferentCasesNormalizer} implies that the factor map between equicontinuous systems is not necessarily compatible with $\GL$-endomorphisms. Consider $X$ as the universal $\Z^{2}$-odometer, and set $Y=\overleftarrow{\Z^{2}}_{(L_{1}^{n})}$. Hence $(Y,{\bm +},\Z^{d})$ is an equicontinuous factor of $(X,{\bm +},\Z^{d})$. Now, by \cref{SymmetrySemigroupUniversalOdometer}, we can define an isomorphism associated with the matrix $\begin{pmatrix}
		2 & 1\\1 & 1
	\end{pmatrix}$ in $X$, However, in contrast, \cref{GeneralTheoremDifferentCasesNormalizer} and \cref{CompatibilityPropertyFactors} establish that such isomorphism is not possible in $Y$.
\end{remark}

\subsection{Proof of \cref{GeneralTheoremDifferentCasesNormalizer}}\label{sec:ProofTheo3.3}

In this subsection, we will prove \cref{GeneralTheoremDifferentCasesNormalizer}. We decompose this proof according to its spectral properties.  We will get more precise  results as the ones stated in \cref{GeneralTheoremDifferentCasesNormalizer}. We start with the case where the expansion matrix has integer eigenvalues. 

From now on, an integer expansion matrix $L$ will be denoted as $L=\begin{pmatrix}
	p & q\\ r & s
\end{pmatrix}$, its powers as $L^{n}=\begin{pmatrix}
	p(n) & q(n)\\ r(n) & s(n)
\end{pmatrix}$ and a matrix $M$ in $\GLtwo$ as $M=\begin{pmatrix}m_{11} & m_{12}\\ m_{21} & m_{22}
\end{pmatrix}$.
		
	\subsubsection{The triangular case}
		We now consider the case where $L$ is a triangular matrix. We focus only on the upper triangular case, i.e., $q\neq0$ and $r=0$. The lower triangular case can be deduced from this, thanks to \cref{CorollariesNormalizerConditionOdometer} via conjugation with the matrix $\begin{pmatrix}
			0 & 1 \\ 1 & 0
		\end{pmatrix}$. For all $n\in \Z$ we have  $L^{n}=\begin{pmatrix}p^{n} & q(n)\\ 0 & s^{n}\end{pmatrix},$ where $q(n)=q(p^{n}-s^{n})/(p-s)=q\sum\limits_{i=0}^{n-1}p^{i}s^{n-1-i}$. Since $\det(L)=ps$ and $\trace(L)=p+s$, $\rad(\det(L))$ divides $\trace(L)$ if and only if $\rad(p)=\rad(s)$. In this case we get a more precise result about the linear representation group as the one mentioned in \cref{GeneralTheoremDifferentCasesNormalizer}.
		
		\begin{proposition}\label{uppertriangularcasenormalizer}
			Let $L\in \mathcal{M}(2,\Z)$ be an expansion upper triangular matrix such that $\rad(\det(L))$ does not divide $\trace(L)$. Then, we have one of the following: 
			
			\begin{enumerate}
				\item\label{ConflictivecaseUppertriangularcase} If $\rad(p)$ does not divide $s$ and $\rad(s)$ divides $p$, then a matrix $M\in GL(2,\Z)$ is in $\vec{N}(\overleftarrow{\Z^{2}}_{L^{n}})$ if and only if $(p-s)^{2}m_{12}=m_{21}q^{2}+(p-s)(m_{11}-m_{22})q$. Moreover, $\vec{N}(\overleftarrow{\Z^{2}}_{(L^{n})})$ is virtually $\Z$.
				
				\item Assume that $\rad(p)$ divides $s$ and $\rad(s)$ does not divide $p$. Then $\vec{N}(\overleftarrow{\Z^{2}}_{(L^{n})})$ is virtually $\Z$.
					
					\item If $\rad(p)$ does not divide $s$ and $\rad(s)$ does not divide $p$, we have two cases:
					\begin{itemize}
						\item If $2q\in (p-s)\Z$, then $\vec{N}(\overleftarrow{\Z^{2}}_{(L^{n})})$ is isomorphic to $\Z/2\Z\times \Z/2\Z$.
						\item Otherwise, $\vec{N}(\overleftarrow{\Z^{2}}_{(L^{n})})$ is isomorphic to $\Z/2\Z$.
					\end{itemize}
				\end{enumerate}
			\end{proposition}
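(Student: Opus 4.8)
The plan is to read membership in $\vec N(\overleftarrow{\Z^2}_{(L^n)})$ through \cref{CorollariesNormalizerConditionOdometer}: with $L=\begin{pmatrix}p&q\\0&s\end{pmatrix}$, $q\neq0$, and $p\neq s$ (the hypothesis $\rad(\det L)\nmid\trace L$ excludes $\rad(p)=\rad(s)$), a matrix $M=\begin{pmatrix}m_{11}&m_{12}\\m_{21}&m_{22}\end{pmatrix}\in\GLtwo$ lies in the linear representation group iff it satisfies \eqref{normalizercondition3}. Using $L^{n}=\begin{pmatrix}p^{n}&q(n)\\0&s^{n}\end{pmatrix}$ with $q(n)=q(p^{n}-s^{n})/(p-s)$ and $\adj(L^{n})=\begin{pmatrix}s^{n}&-q(n)\\0&p^{n}\end{pmatrix}$, I would expand $\adj(L^{n})ML^{m}$ and turn \eqref{normalizercondition3} into the requirement that, for every $n$, some $m=m(n)$ makes all four entries vanish modulo $(ps)^{n}$, i.e.\ (for the entries $(1,1),(2,1),(2,2),(1,2)$ respectively)
$$p^{m}\big(s^{n}m_{11}-q(n)m_{21}\big)\equiv 0,\qquad p^{n+m}m_{21}\equiv 0,\qquad p^{n}\big(q(m)m_{21}+s^{m}m_{22}\big)\equiv 0,$$
$$q(m)\big(s^{n}m_{11}-q(n)m_{21}\big)+s^{m}\big(s^{n}m_{12}-q(n)m_{22}\big)\equiv 0,$$
all modulo $(ps)^{n}$. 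The $(2,1)$-entry condition is just $s^{n}\mid p^{m}m_{21}$; arguing prime by prime, it is solvable in $m$ for every $n$ precisely when $\rad(s)\mid p$ (then $s^{n}\mid p^{m}$ for $m$ large) or $m_{21}=0$ (otherwise a prime $\ell\mid s$, $\ell\nmid p$, forces $v_{\ell}(m_{21})=+\infty$). This dichotomy is exactly the split into item (1) versus items (2)--(3). The two recurring tools will be the congruence $q(m)\equiv s^{m-n}q(n)\pmod{p^{n}}$ (drop the terms with $i\ge n$ in $q(m)=q\sum_{i=0}^{m-1}p^{i}s^{m-1-i}$) and a separation of primes into those dividing $p$ but not $s$ (where $v_{\ell}(p-s)=0$ and increasing $m$ is useless) versus those dividing both $p$ and $s$ (where large $m$ makes $s^{m-n}$ and $q(m)$ as divisible as needed, so no constraint survives).

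\textbf{Item (1)} ($\rad(s)\mid p$, hence $\rad(p)\nmid s$). Using $\rad(s)\mid p$ one checks that the $(1,1),(2,1),(2,2)$ congruences, as well as all the $\ell$-adic constraints coming from the $(1,2)$ congruence at primes $\ell\mid s$, hold for $m$ large. Substituting $q(m)\equiv s^{m-n}q(n)\pmod{p^{n}}$ into the $(1,2)$ congruence turns it into $s^{m-n}B_{n}\equiv0\pmod{p^{n}}$, where $(p-s)^{2}B_{n}$ is an explicit integer combination of $p^{2n},p^{n}s^{n},s^{2n}$ whose $s^{2n}$-coefficient equals $C:=(p-s)^{2}m_{12}-q^{2}m_{21}-(p-s)q(m_{11}-m_{22})$. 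At a prime $\ell\mid p$, $\ell\nmid s$, we have $v_{\ell}(s^{m-n})=0=v_{\ell}(p-s)$, so the condition reads $v_{\ell}(B_{n})\ge nv_{\ell}(p)$ for all $n$; since $v_{\ell}(C)$ is bounded this forces $C=0$, and conversely $C=0$ makes $(p-s)^{2}B_{n}$ divisible by $p^{n}$. Hence $M\in\vec N\iff(p-s)^{2}m_{12}=m_{21}q^{2}+(p-s)(m_{11}-m_{22})q$. A direct computation then shows this equation says precisely that $M$ preserves the line $\Q\cdot(q,s-p)^{T}$, the $s$-eigenline of $L$. The stabiliser of a line in $\GLtwo$ is, in a $\Z$-basis adapted to the line, the group $\{\begin{pmatrix}a&b\\0&c\end{pmatrix}:a,c\in\{\pm1\},\,b\in\Z\}$, which contains $\Z$ with index $4$; so $\vec N(\overleftarrow{\Z^2}_{(L^n)})$, being this line stabiliser, is virtually $\Z$.

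\textbf{Items (2) and (3).} Here $m_{21}=0$, so $M$ is upper triangular with $m_{11},m_{22}\in\{\pm1\}$; the $(1,1),(2,1),(2,2)$ congruences hold for $m\ge n$, and substituting $q(m)\equiv s^{m-n}q(n)\pmod{p^{n}}$ reduces the $(1,2)$ congruence to $v_{\ell}\big(q(n)(m_{11}-m_{22})+s^{n}m_{12}\big)\ge nv_{\ell}(p)$ for every prime $\ell\mid p$ with $\ell\nmid s$. In item (2), $\rad(p)\mid s$, so no such prime exists and every admissible $M$ works; thus $\vec N(\overleftarrow{\Z^2}_{(L^n)})=\{\begin{pmatrix}a&b\\0&c\end{pmatrix}:a,c\in\{\pm1\},\,b\in\Z\}$ is virtually $\Z$. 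In item (3), using $(p-s)\big(q(n)(m_{11}-m_{22})+s^{n}m_{12}\big)=qp^{n}(m_{11}-m_{22})+s^{n}\big((p-s)m_{12}-q(m_{11}-m_{22})\big)$ and $v_{\ell}(p-s)=0$, the constraint for all $n$ forces $(p-s)m_{12}=q(m_{11}-m_{22})$, which conversely suffices. As $m_{11}-m_{22}\in\{0,\pm2\}$: if $m_{11}=m_{22}$ then $m_{12}=0$ and $M=\pm\Id$; if $m_{11}=-m_{22}$ then $m_{12}=\pm2q/(p-s)$, an integer exactly when $2q\in(p-s)\Z$. So $\vec N\cong\Z/2\Z$ when $2q\notin(p-s)\Z$, and $\vec N\cong\Z/2\Z\times\Z/2\Z$ when $2q\in(p-s)\Z$.

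\textbf{The main difficulty} is the combined bookkeeping of the two quantifiers in the $(1,2)$ congruence: a larger $m$ helps only at primes dividing both $p$ and $s$, while the surviving constraints at primes dividing $p$ but not $s$ must hold for every $n$, and it is this ``for all $n$'' that collapses the congruences into a single algebraic identity ($C=0$ in item (1), $(p-s)m_{12}=q(m_{11}-m_{22})$ in item (3)). Once the argument is organised prime by prime the remaining computations are routine, and the lower triangular case follows by conjugating with $\begin{pmatrix}0&1\\1&0\end{pmatrix}$, as already noted, via \cref{CorollariesNormalizerConditionOdometer}.
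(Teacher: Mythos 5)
Your proposal is correct and follows essentially the same route as the paper: membership is translated into the congruences \eqref{normalizercondition3}, analyzed prime by prime with the same dichotomies ($\rad(s)\mid p$ forcing the equation $C=0$, otherwise $m_{21}=0$ and the further split on $\rad(p)\mid s$), yielding the same algebraic identities and group structures. The only differences are cosmetic streamlinings: you work with $M$ directly via the congruence $q(m)\equiv s^{m-n}q(n)\ (\text{mod}\ p^{n})$ instead of the paper's auxiliary anti-diagonal matrix $\overline{M}$, and in case (1) you identify the solution set as the $\GLtwo$-stabilizer of the $s$-eigenline of $L$, which unifies the paper's two sub-cases ($(p-s)\mid q$ or not) into a single conjugation to the unimodular upper triangular group.
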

	
			\begin{proof}
				Let $M$ be in $\vec{N}(\overleftarrow{\Z^{2}}_{(L^{n})})$. Define the matrix $\overline{M}=(p-s)M-(m_{11}-m_{22})L-(p\cdot m_{22}-m_{11}\cdot s)\Id_{\R^{2}}$. Then, $\overline{M}$ satisfies \eqref{normalizercondition3}. Moreover, note that $\overline{M}$ has the form $\overline{M}=\begin{pmatrix}
					0 & \overline{m_{12}}\\ \overline{m_{21}} & 0
				\end{pmatrix}$, where $\overline{m_{12}}=(p-s)m_{12}-(m_{11}-m_{22})q$ and $\overline{m_{21}}=(p-s)m_{21}$, with $\overline{m_{12}},\overline{m_{21}}\in\Z$. Now, \eqref{normalizercondition3} implies that for all $n,\ell>0$, 
				
				\begin{align}
					\begin{pmatrix}
						-\overline{m_{21}}p^{\ell}q(n) & \overline{m_{12}}s^{n+\ell}-\overline{m_{21}} q(\ell)q(n)\\
						\overline{m_{21}}p^{n+\ell} & \overline{m_{21}}pq(\ell)
					\end{pmatrix} \equiv \begin{pmatrix}
						0 & 0\\
						0 & 0
					\end{pmatrix}\ (\text{mod}\ p^{n}s^{n}).\label{eqsuppertriangularcase}
				\end{align}
				
				Suppose that $\rad(s)$ does not divide $p$. Then, there exists a prime number $t$ dividing $s$ such that for all $n>0$ and $\ell>0$, $p^{\ell}$ is an invertible element in $\Z/t^{n}\Z$. Hence, $\overline{m}_{21}\equiv 0\ (\text{mod}\ t^{n})$ for any $n>0$, which implies that  $\overline{m}_{21}=0$, so $m_{21}=0$. Now, by \eqref{eqsuppertriangularcase}, we get that
				\begin{align}
				\forall n\ge 0, \exists \ell\ge 0, \quad 	\overline{m}_{12}s^{\ell} & \equiv 0 \ (\text{mod}\ p^{n}). \label{eq7uppertriangularcase}
				\end{align}
				
				\noindent There are  two cases:
				
				\begin{itemize}
					\item  If $\rad(p)$ does not divide $s$, then \eqref{eq7uppertriangularcase} implies that $\overline{m}_{12}=0$. We conclude that $\overline{M}=\begin{pmatrix}
						0 & 0\\ 0 & 0
					\end{pmatrix}$, i.e., $(p-s)M=(m_{11}-m_{22})L+(p\cdot m_{22}-m_{11}\cdot s)\Id_{\R^{2}}$. Since $m_{21}=0$, then $M$ has the form
					$$M=\begin{pmatrix}
						m_{11} & m_{12}\\ 0 & m_{22}
					\end{pmatrix},$$
					
					\noindent where $m_{12} \in \Z$ satisfies $(p-s)m_{12}=(m_{11}-m_{22})q$. 
					
					\begin{itemize}
						\item Note that $m_{11}=m_{22}$ if and only if $m_{12}=0$. 
						
						\item If $m_{11}\neq m_{22}$, then $m_{11}-m_{22}\in \{-2,2\}$, so $(p-s)m_{12}=\pm2q$. Since $M$ has integer coefficients, this necessarily implies that $2q\in (p-s)\Z$. If this condition is satisfied, then $M$ has the form
						$$M=\begin{pmatrix}
							m_{11} & \frac{(m_{11}-m_{22})q}{p-s}\\ 0 & m_{22}
						\end{pmatrix}.$$
						
						It is straightforward  to check  that $M^{2}$ is the identity matrix. We conclude that $\vec{N}(\overleftarrow{\Z^{2}}_{(L^{n})})$ is isomorphic to $\Z/2\Z \times \Z/2\Z$. If $2q\notin (p-s)\Z$, then $\vec{N}(\overleftarrow{\Z^{2}}_{(L^{n})})$ is isomorphic to $\Z/2\Z$.
					\end{itemize}
					
					\item If $\rad(p)$ divides $s$, then any $\overline{m}_{12}\in \Z$ satisfies \eqref{eq7uppertriangularcase}. Thus, any matrix $M=\begin{pmatrix}
						m_{11} & m_{12}\\ 0 & m_{22}
					\end{pmatrix}$ with $|m_{11}m_{22}|=1$ satisfies \eqref{normalizercondition3}.
				\end{itemize}
				
				Finally, if $\rad(s)$ divides $p$, then for any $n>0$ and any $\ell$ large enough $s^{n}$ divides $p^{\ell}$ and $q(\ell)$. Let $t$ be a prime number dividing $p$ that does not divide $s$. Then, by \eqref{eqsuppertriangularcase} we obtain that
				\begin{align}
					(p-s)^{2}\overline{m}_{12}s^{n+\ell}\equiv \overline{m}_{21}q^{2}s^{n+\ell} \ (\text{mod}\ t^{n}). \label{eq8uppertriangularcase}
				\end{align}
				
				\noindent Since $t$ does not divide $s$, for any $n,\ell>0$, $s^{n+\ell}$ is an invertible element in $\Z/t^{n}\Z$. So \eqref{eq8uppertriangularcase} is reduced to
				\begin{align}
				\forall n,\	(p-s)^{2}\overline{m}_{12} & \equiv \overline{m}_{22}q^{2} \ (\text{mod}\ t^{n}). \label{eq9uppertriangularcase}
				\end{align}
				
				This implies that $(p-s)^{2}\overline{m}_{12}=\overline{m}_{21}q^{2}$. Thus, we get that \begin{align}
					(p-s)^{2}m_{12}=m_{21}q^{2}+(p-s)(m_{11}-m_{22})q.\label{eq10uppertriangularcase}
				\end{align}
				
				This implies that if $M\in \vec{N}(\overleftarrow{\Z^{2}}_{(L^{n})})$, then $M$ is in $\sspan_{\Q}\left\{ L,\Id_{\R^{d}}, \begin{pmatrix}
				    0 & 1\\ (p-s)^2/q^2 & 0
				\end{pmatrix}\right\}$. We separate here in two cases:
				
				\begin{itemize}
				    \item If $(p-s)$ divides $q$, we write $q=k(p-s)$ for some $k\in \Z$. By \eqref{eq10uppertriangularcase}, we have that
				$$m_{12}=m_{21}k^{2}+k(m_{11}-m_{22}).$$
				
				\noindent Since $|\det(M)|=1$ and $\det(M)=(m_{11}+m_{21}\cdot k)(m_{22}-m_{21}\cdot k)$, we get that $|m_{11}+m_{21}\cdot k|=1$ and $|m_{22}-m_{21}\cdot k|=1$. We can parameterize the matrices in $\vec{N}(\overleftarrow{\Z^{2}}_{(L^{n})})$ as follows:
				$$\left\{
\Id_{\R^2}+\ell X_k,  Y_k + \ell X_k, -Y_k + \ell X_k, -\Id_{\R^2}+\ell X_k
:  \ell \in\mathbb{Z}\right\},$$
where $\displaystyle{X_k = 	\begin{pmatrix}  -k & - k^2 \\ 1 & k  \end{pmatrix} }$ and   $\displaystyle{Y_k = 	\begin{pmatrix} 1 & 2k \\ 0 & -1  \end{pmatrix} }$.
			%
			%
				Note that this group is virtually $\Z$, since the quotient by  the group generated by $\Id_{\R^2} +X_k$ 
				is finite. 
				
				\item If $(p-s)$ does not divide $q$, we will find a matrix $P\in \GLtwo$ such that $\vec{N}(\overleftarrow{\Z^{2}}_{(L^{n})})$ is conjugate to the group of matrices $\left\{\begin{pmatrix} m_{11} & m_{12}\\
				0 & m_{22}\end{pmatrix}\colon m_{11},m_{12},m_{22}\in \Z, |m_{11}m_{22}|=1\right\}$ and we conclude that $\vec{N}(\overleftarrow{\Z^{2}}_{(L^{n})})$ is virtually $\Z$. Indeed, set $c=\gcd(p-s,q)$ and $g=(p-s)/c$, $h=q/c$. Since $\gcd(g,h)=1$, B\'ezout's lemma implies the existence of two numbers $e,f\in \Z$ such that $eh-gf=1$. A standard computation shows that $P=\begin{pmatrix} e & f\\ g & h\end{pmatrix}$ is such a matrix. 
				\end{itemize}

			\end{proof}
				
\subsubsection{The general case}\label{SectionProofOfTheoremGeneralBifurcationNormalizer} We are ready to prove \cref{GeneralTheoremDifferentCasesNormalizer}. 
				
\begin{proof}[Proof of \cref{GeneralTheoremDifferentCasesNormalizer}]
We use the notation introduced in \cref{sec:ProofTheo3.3}. Since we already proved the triangular case, we assume that the coefficients  of the  expansion matrix $L$ satisfy $q\cdot r\neq 0$.

It will be useful to note that the Cayley-Hamilton theorem implies that 
	\begin{align}\label{eq:CayleyHamilton}
	L^{2}=\trace(L)L-\det(L)\Id_{\R^2}.
	\end{align} 
	First assume that $\rad(\det(L))$ divides $\trace(L)$. By \eqref{eq:CayleyHamilton}, we can conclude that ${L^{2}\equiv 0\ (\text{mod}\ \rad(\det(L)))}$. Hence, for all $n\in \NN$ there exists $m(n)\in\NN$ large enough such that $L^{m(n)}\equiv 0\ (\text{mod}\ \det(L)^{n})$. Therefore, any matrix in $\GLtwo$ satisfies \eqref{normalizercondition3} and we can deduce that $\vec{N}(\overleftarrow{\Z^{2}}_{(L^{n})})=GL(2,\Z)$. 
					
	Now we deal with the case when $\rad(\det(L))$ does not divide $\trace(L)$. In dimension $2$, this implies that $L$ is diagonalizable.  Let $M=\begin{pmatrix}m_{11} & m_{12}\\ m_{21} & m_{22}\end{pmatrix}$ be in $\vec{N}(\overleftarrow{\Z^{2}}_{(L^{n})})$, so that it satisfies \eqref{normalizercondition3} . Define the matrix $\overline{M}=rM-m_{21}L-(r\cdot m_{11}-p\cdot m_{21})\Id_{\R^{2}}$. The matrix $\overline{M}$ also satisfies \eqref{normalizercondition3} and has the form $\overline{M}=\begin{pmatrix}
						0 & \overline{m_{12}}\\ 0 & \overline{m_{22}}
					\end{pmatrix}$, with  $\overline{m}_{12},\overline{m}_{21}\in\Z$.
					
Suppose first that $L$ has integer eigenvalues $t_1, t_2\in \Z$, i.e., we can write 
$$(eh-fg)L= P\begin{pmatrix}
			        t_{1} & 0\\
			        0 & t_{2}
			    \end{pmatrix}\adj(P), \quad\text{ for some integer matrix } P= \begin{pmatrix}
			        e & f\\
			        g & h
			    \end{pmatrix}.$$
		\noindent If ${|eh-fg|=1}$, then we can use \cref{uppertriangularcasenormalizer} with \cref{CorollariesNormalizerConditionOdometer} to conclude that $\vec{N}(\overleftarrow{\Z^{2}}_{(L^{n})})$ is conjugate (via $P$ in $\GLtwo$) to the linear representation group $\vec{N}(\overleftarrow{\Z^{2}}_{(t_{1}^{n}\Z\times t_{2}^{n}\Z)})$. The same conclusion holds when $L$ is conjugate (via a $\GLtwo$-matrix) to a triangular matrix. We then assume that $|eh-fg|>1$, $e,f,g,h\in \Z$, and $\gcd(e,g)=\gcd(f,h)=1$. For any $n>0$, the coefficients of $L^n$ are given by:
			    $$\begin{array}{ll}
			     p(n)=\dfrac{eht_{1}^{n}-fgt_{2}^{n}}{eh-fg} & q(n)=\dfrac{ef(t_{1}^{n}-t_{2}^{n})}{eh-fg}\\
			     &\\
			     r(n)=\dfrac{gh(t_{1}^{n}-t_{2}^{n})}{eh-fg} & s(n)=\dfrac{eht_{2}^{n}-fgt_{1}^{n}}{eh-fg}.
			    \end{array}$$
			    
\noindent So, \eqref{normalizercondition3} can be rewritten as:
\begin{align}\label{eqsintegereigenvaluecases} gh(t_{1}^{\ell}-t_{2}^{\ell})[\overline{m}_{12}(eht_{2}^{n}-fgt_{1}^{n})-\overline{m}_{22}ef(t_{2}^{n}-t_{1}^{n})] \equiv 0\ (\text{mod}\ t_{1}^{n}t_{2}^{n})\\   gh(t_{1}^{\ell}-t_{2}^{\ell})[\overline{m}_{22}(eht_{1}^{n}-fgt_{2}^{n})-\overline{m}_{12}gh(t_{1}^{n}-t_{2}^{n})] \equiv 0\ (\text{mod}\ t_{1}^{n}t_{2}^{n})\notag\\       (eht_{2}^{\ell}-fgt_{1}^{\ell})[\overline{m}_{12}(eht_{2}^{n}-fgt_{1}^{n})-\overline{m}_{22}ef(t_{2}^{n}-t_{1}^{n})] \equiv 0\ (\text{mod}\ t_{1}^{n}t_{2}^{n}) \notag\\    (eht_{2}^{\ell}-fgt_{1}^{\ell})[\overline{m}_{22}(eht_{1}^{n}-fgt_{2}^{n})-\overline{m}_{12}gh(t_{1}^{n}-t_{2}^{n})] \equiv 0\ (\text{mod}\ t_{1}^{n}t_{2}^{n}).\notag
			    \end{align}
Since $\rad(\det(L))=\rad(t_{1}t_{2})$ does not divide $\trace(L)=t_{1}+t_{2}$, one of the following three cases hold:
				
\begin{enumerate}[label=\textbf{Case }\arabic*.]
    \item Suppose that $\rad(t_{1})$ divides $t_{2}$, but there exists a prime number $t$ dividing $t_{2}$ that does not divide $t_{1}$. Then \eqref{eqsintegereigenvaluecases} can be reduced to
	      \begin{align}\label{eqsintegereigenvaluecasesCASE2}    fgt_{1}^{m+n}[\overline{m}_{22}\cdot eh-\overline{m}_{12}\cdot gh] \equiv 0\ (\text{mod}\ t^{n}).
			          \end{align}
		Since $t$ does not divide $t_{1}$, for any $n,m\geq 0$,  $t_{1}^{n+m}$ is an invertible element in $\Z/t^{n}\Z$. We can also choose $n$ large enough so that $t^{n}$ does not divide any of the coefficients $e,f,g,h\in \Z$. We then conclude that
$\overline{m}_{12}g=\overline{m}_{22}e$. This implies that 
$$\vec{N}(\overleftarrow{\Z^{2}}_{(L^{n})})\subseteq \left\{aL+b\Id_{\R^{2}}+c\begin{pmatrix}
				    0 & e\\
				    0 & g
				\end{pmatrix}, a,b,c \in \frac 1r\Z \right\}\cap \GLtwo.$$

Since $P^{-1}\begin{pmatrix}
 0 & e     \\ 0 & g
\end{pmatrix} P =  \begin{pmatrix}
 g & h     \\ 0 & 0
\end{pmatrix}$, the set  $P^{-1} \vec{N}(\overleftarrow{\Z^{2}}_{(L^{n})}) P $ is a subgroup of unimodular upper triangular  matrices  in $G$, where
$$ G = \left\{\begin{pmatrix}
				    a & b\\
				    0 & c
				\end{pmatrix}, \quad  a,b,c \in \frac 1r\Z, |ac| =1\right\}.$$ 
Notice that all commutators of $G$ are of the form $\begin{pmatrix}
 1 & b     \\ 0 & 1
\end{pmatrix}$. So, the derived subgroup $G'$ (generated by the commutators) is isomorphic to $\Z$. Moreover, the abelianization  $G/G'$ of $G$ is finite. Therefore, the abelianization of $\vec{N}(\overleftarrow{\Z^{2}}_{(L^{n})})$ is finite, and its derived subgroup is isomorphic to a subgroup (eventually trivial)  of $\Z$. 
Conversely, a direct computation shows that the matrix $P^{-1}\begin{pmatrix}1& \det(P) \\ 0 &1 \end{pmatrix}P$ satisfies \eqref{normalizercondition3}, proving that the derived subgroup of  $\vec{N}(\overleftarrow{\Z^{2}}_{(L^{n})})$ is nontrivial.  

    \item The case where $\rad(t_{2})$ divides $t_{1}$ but $\rad(t_1)$ does not divide $t_2$ is symmetric to the former one. 
				
    \item Neither $\rad(t_{1})$ divides $t_{2}$ nor $\rad(t_{2})$ divides $t_{1}$. The  former computations in the two cases provide that\begin{align*}
        \overline{m}_{12}g = \overline{m}_{22}e,  \quad \wedge \quad 
        \overline{m}_{12}h = \overline{m}_{22}f.
    \end{align*} 
Since $eh-fg\neq 0$, this implies that $\overline{m}_{12}=0$ and $\overline{m}_{22}=0$, so $\overline{M}=0$. We conclude that $M$ commutes with $L$, i.e., the linear representation group $\vec{N}(\overleftarrow{\Z^{2}}_{(L^{n})})$ is equal to $\Cent_{\GLtwo}(L)$. 
A matrix $M$ in $\GLtwo$ commuting with $L$ has to preserve each of the one-dimensional eigenspaces of $L$. Therefore the matrix $M$ is $\Q$-diagonalizable with the same eigenvectors. Since it is in $\GLtwo$, its spectrum is a subset of $\{1, -1\}$. Moreover the spectrum determines only one matrix  in $\Cent_{\GLtwo}(L)$. Since $-M$ is also in the centralizer,  it follows that  the centralizer of $L$ is isomorphic to $\Z/2\Z$ or $(\Z/2\Z)^{2}$.
\end{enumerate}
				
Now we suppose that $L$ does not have integer eigenvalues. A direct induction on \eqref{eq:CayleyHamilton} gives, for any $n>0$, that $L^{n}\equiv \trace(L)^{n-1}L\ (\text{mod}\ \det(L))$. Since $\rad(\det(L))$ does not divide $\trace(L)$, there exists a prime number $t$ dividing $\det(L)$ that does not divide $p$ or $s$. Without loss of generality (up to a conjugation with $\begin{pmatrix}
						0 & 1 \\ 1 & 0
					\end{pmatrix}$) we may assume that $t$ does not divide $s$. As $s(n)\equiv \trace(L)^{n-1}s\ (\text{mod}\ \det(L))$, then, for all $n>0$ and $m>0$, $s(m)$ is an invertible element in $\Z/t^{n}\Z$. Hence, \eqref{normalizercondition3} implies that 
				
				\begin{align}
					\overline{m_{12}}s(n)-\overline{m_{22}}q(n) & \equiv 0 \ (\text{mod}\ t^{n})\label{eq5generalcase}\\
					-\overline{m_{12}}r(n)+\overline{m_{22}}p(n) & \equiv 0 \ (\text{mod}\ t^{n}),\label{eq6generalcase} 
				\end{align}
				
				\noindent which is equivalent to 
				\begin{align}
					\adj(L^{n})\dbinom{\overline{m_{12}}}{\overline{m_{22}}} \equiv \dbinom{0}{0}\ (\text{mod}\ t^{n}).\label{eq7generalcase}
				\end{align} 
				
				Consider the set $E=\left\{\dbinom{\overline{m_{12}}}{\overline{m_{22}}}\in \Z^{2}\colon\ \text{satisfying}\ \eqref{eq7generalcase}\ \text{for all}\ n>0\right\}$. This set is $\adj(L)$-invariant, and if $\overline{m_{22}}=0$, then \eqref{eq5generalcase} implies that $\overline{m_{12}}=0$. 
				
				Now, set $\dbinom{\overline{m_{12}}^{(1)}}{\overline{m_{22}}^{(1)}},\dbinom{\overline{m_{12}}^{(2)}}{\overline{m_{22}}^{(2)}}\in E$. Note that
				$$\overline{m_{22}}^{(2)}\dbinom{\overline{m_{12}}^{(1)}}{\overline{m_{22}}^{(1)}}-\overline{m_{22}}^{(1)}\dbinom{\overline{m_{12}}^{(2)}}{\overline{m_{22}}^{(2)}}=\dbinom{\overline{m_{22}}^{(2)}\overline{m_{12}}^{(1)}-\overline{m_{22}}^{(1)}\overline{m_{12}}^{(1)}}{0},$$
				
				\noindent hence, by the former remark  $\overline{m_{22}}^{(2)}\dbinom{\overline{m_{12}}^{(1)}}{\overline{m_{22}}^{(1)}}=\overline{m_{22}}^{(1)}\dbinom{\overline{m_{12}}^{(2)}}{\overline{m_{22}}^{(2)}}$. So, $E$ is an $\adj(L)$-invariant $\Z$-module of rank at most 1. 
				
\noindent Since $L$ does not have integer eigenvalues, $E$ must have rank 0. This implies that $\overline{m_{12}}=\overline{m_{22}}=0$. Hence $M$ is a polynomial in $L$ and  commutes with $L$. We conclude that $\vec{N}(\overleftarrow{\Z^{2}}_{(L^{n})})$ is equal to $\Cent_{\GLtwo}(L)$.
				
We claim that the centralizer $\Cent_{\GLtwo}(L)$ is finite when $L$ has no real eigenvalues. Indeed, for $M$ to commute with $L$ it has to satisfy	\begin{equation}\label{eq8generalcase}
					\begin{array}{cc}
						r\cdot m_{12}-q\cdot m_{21} & =0\\
						r\cdot m_{22}-s\cdot m_{21}-r\cdot m_{11}+p\cdot m_{21} & = 0.
					\end{array}
				\end{equation}
				
				\begin{itemize}
					\item Suppose $p=s$. In this case, \eqref{eq8generalcase} implies that $m_{11}=m_{22}$ and $m_{21}=m_{11}\cdot r/q$. Note that $L$ has complex eigenvalues if and only if $qr<0$, as determined by the condition $(2p)^{2}-4(p^{2}-qr)<0$. Since $|\det(M)|=1$, then $|m_{11}^{2}-m_{12}^{2}\cdot r/q|$ equals $1$. Therefore, when $qr<0$, there exists a finite number of points $(m_{11},m_{12})\in \Z^{2}$ satisfying \eqref{eq8generalcase}. 
				
				\item If $p\neq s$, then \eqref{eq8generalcase} implies that $m_{12}=q(m_{11}-m_{22})/(p-s)$ and $m_{21}=r(m_{11}-m_{22})/(p-s)$. Since $M \in GL(2,\Z)$, we get that
				\begin{equation}\label{eqgeneralcasefordiagonalelements}m_{11}m_{22}-(m_{11}-m_{22})^{2}\dfrac{qr}{(p-s)^{2}}=\pm1.
				\end{equation}
				
				In this case, there is a finite number of solutions if  $\trace(L)^{2}-4\det(L)<0$, which is equivalent to $L$ having no real  eigenvalues.
\end{itemize}
				
			\end{proof}

			\begin{remark} In the particular case when $\gcd(\trace(L),\det(L))=1$, we can simplify the proof noting that \eqref{eq5generalcase}, \eqref{eq6generalcase} imply the existence of two sequences $(k_{n}^{1})_{n>0}, (k_{n}^{2})_{n>0}$ in $\Z$ such that $\det(L)^{n}k_{n}^{1}=\overline{m_{12}}s(n)-\overline{m_{22}}q(n)$ and $\det(L)^{n}k_{n}^{2}=-\overline{m_{12}}r(n)+\overline{m_{22}}p(n)$, i.e.,
					\begin{equation}
						\begin{array}{cl}
							k_{n}^{1} & =\overline{m_{12}}\dfrac{s(n)}{\det(L)^{n}}-\overline{m_{22}}\dfrac{q(n)}{\det(L)^{n}}\\
							&\\
							k_{n}^{2} & =-\overline{m_{12}}\dfrac{r(n)}{\det(L)^{n}}+\overline{m_{22}}\dfrac{p(n)}{\det(L)^{n}}.
						\end{array}
					\end{equation}
					
					Since $L$ is an expansion matrix, then $L^{-1}$ is a contraction, so we have that $$\lim\limits_{n\to \infty}\dfrac{p(n)}{\det(L)^{n}}=\lim\limits_{n\to \infty}\dfrac{q(n)}{\det(L)^{n}}=\lim\limits_{n\to \infty}\dfrac{r(n)}{\det(L)^{n}}=\lim\limits_{n\to \infty}\dfrac{s(n)}{\det(L)^{n}}=0,$$
					
					\noindent this implies that for all $n\in\NN$ large enough, $k_{n}^{1}=k_{n}^{2}=0$, and we conclude that $\overline{m_{12}}=\overline{m_{22}}=0$.
			\end{remark}

\cref{GeneralTheoremDifferentCasesNormalizer} implies that the linear representation group of constant-base $\Z^{2}$-odometer systems is computable. However, the techniques developed in this article may not be directly applicable to higher dimensions. This raises the following question:

\begin{question}\label{ques:LinearRepOdo} With respect to the linear representation group of higher-dimensional constant-base odometer systems, are its elements computable? Is its group structure computable?
\end{question}
By ``computable elements", we mean whether there exists an algorithm to decide whether a matrix $M$ belongs to the linear representation group or not. The second question asks if we can obtain an algorithm to determine the linear representation group, up to isomorphism,  as a function of the base matrix $L$.

\section{Minimal subshifts with infinite linear representation group}\label{sec:NormalizerSubshiftEx}

In this section, we present minimal substitutive subshifts with infinite linear representation groups, thereby providing a positive response to a question posed in \cite{baake2021number}. Their normalizer groups are fully explained. We prove the following result.
\begin{theorem}\label{prop:MainresultSection4}
For any  expansion matrix  $L \in {\mathcal M} (d, \Z) $ with $d \ge 1$, $|\det L|\geq 3$, there exists an aperiodic minimal substitutive $\Z^d$-subshift $X$ with expansion matrix $L$ such that 
\begin{enumerate}
    \item\label{it:1} It is coalescent,
    \item\label{it:2} it is an almost 1-to-1 extension of $\overleftarrow{\Z^{d}}_{(L^{n})_n}$,
    \item\label{it:3} its automorphisms are reduced to the shift transformations.
    \item\label{it:4} its linear representation semigroup $\vec{N}(X, S)$ is equal to
    \begin{align}\label{eq:NL}
        \left\{ M \in \GL: \exists n_0, \forall n,p\ge n_0 \begin{array}{l}
             L^{-n}ML^n \in \GL \\
             L^{-n}ML^n = L^{-p}ML^p \pmod {L(\Z^d)}
        \end{array} \right\},
    \end{align}
    \item\label{it:5} its normalizer group is a semidirect product of $\Z^d$ with $\vec{N}(X, S)$.  
\end{enumerate}
\end{theorem}
In the proof, we will spell out  the  isomorphisms involved  in Theorem \ref{prop:MainresultSection4}.

The  properties defining  an  element  $M$ of \eqref{eq:NL} are: $M$ belongs to the subgroup $\bigcup_{k\ge 0}\bigcap_{n\ge k} L^{n} \GL L^{-n}$, that is for each large enough integer  $n>0$, there is a $\Z^d$-automorphism $M_n$ such that
$L^n M_n  = M L^n$. Moreover, each automorphism $M_n$ permutes the  $L(\Z^{d})$-cosets. These permutations are ultimately all the same, for large enough $n$. 

Notice that in particular, when $L$ is proportional to the identity,  Theorem \ref{prop:MainresultSection4} provides an example of a minimal subshift with a linear representation semigroup equal to $\GL$.

To describe these explicit examples, we will briefly introduce some notions coming from (aperiodic) tiling theory. Most of the references come from \cite{baake2013aperiodic}.  
From a tiling perspective, the \emph{half-hex inflation} is a well-known inflation rule analogous to a symbolic substitution (for more properties about this tiling substitution, see \cite[Section 6.4]{baake2013aperiodic}). The tiles consist of 6 regular half-hexagons,  each one being the image under a 6-fold rotation of a single one. The inflation rule, up to rotation, is described in \cref{halfhextiling}. 

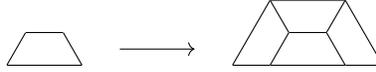
\begin{figure}[H]
	\begin{center}
		\begin{tikzpicture}[scale=0.5]
			\node(s1) at (0,0)[scale=.02]{};
			\node(s2) at (2,0)[scale=.02]{};
			\node(s3) at (1.5,0.87)[scale=.02]{};
			\node(s4) at (0.5,0.87)[scale=.02]{};
			
			\path[thin] (s1) edge (s2);
			\path[thin] (s2) edge (s3);
			\path[thin] (s3) edge (s4);
			\path[thin] (s4) edge (s1);	
			
			\node(s5) at (3,0.43)[scale=.02]{};
			\node(s6) at (5,0.43)[scale=.02]{};
			
			\path[thin,->] (s5) edge (s6);
			
			\node(s7) at (6,0)[scale=.02]{};
			\node(s8) at (7,0)[scale=.02]{};
			\node(s9) at (9,0)[scale=.02]{};
			\node(s10) at (10,0)[scale=.02]{};
			\node(s11) at (9,1.73)[scale=.02]{};
			\node(s12) at (8.5,0.87)[scale=.02]{};
			\node(s13) at (7.5,0.87)[scale=.02]{};
			\node(s14) at (7,1.73)[scale=.02]{};
			
			\path[thin] (s7) edge (s8);
			\path[thin] (s8) edge (s13);
			\path[thin] (s13) edge (s14);
			\path[thin] (s14) edge (s7);
			
			\path[thin] (s8) edge (s9);
			\path[thin] (s9) edge (s12);
			\path[thin] (s12) edge (s13);
			
			\path[thin] (s9) edge (s10);
			\path[thin] (s10) edge (s11);
			\path[thin] (s11) edge (s12);
			\path[thin] (s11) edge (s14);

		\end{tikzpicture}
	\end{center}
	\caption{Tile-substitution of the half-hex tiling.}
	\label{halfhextiling}
\end{figure}

In tiling terminology, it is an \emph{edge-to-edge inflation}, which means that each inflated tile is precisely dissected into copies of the tiles, and the vertices of any tile only intersect with the vertices of the adjacent tiles. This inflation defines an aperiodic tiling of the plane (see \cite[Example 6.4]{baake2013aperiodic}). Since the largest edge of any half-hex can only meet the largest edge of the adjacent half-hexes, two half-hexes always join to form a regular hexagon via their largest edges. By applying this procedure, the half-hex tiling can be decomposed into three hexagons, each distinguished by a single diagonal line, as shown in \cref{hexagonfigures} (see \cite[Example 6.4]{baake2013aperiodic}).

\begin{figure}[H]
	\begin{center}
		\begin{tikzpicture}[scale=0.5]
			\node(s1) at (0,0)[scale=.02]{};
			\node(s2) at (0.5,-0.87)[scale=.02]{};
			\node(s3) at (1.5,-0.87)[scale=.02]{};
			\node(s4) at (2,0)[scale=.02]{};
			\node(s5) at (1.5,0.87)[scale=.02]{};
			\node(s6) at (0.5,0.87)[scale=.02]{};
			
			\path[thin] (s1) edge (s2);
			\path[thin] (s2) edge (s3);
			\path[thin] (s3) edge (s4);
			\path[thin] (s4) edge (s5);	
			\path[thin] (s5) edge (s6);
			\path[thin] (s6) edge (s1);
			\path[thin] (s1) edge (s4);

			\node(s7) at (4,0)[scale=.02]{};
			\node(s8) at (4.5,-0.87)[scale=.02]{};
			\node(s9) at (5.5,-0.87)[scale=.02]{};
			\node(s10) at (6,0)[scale=.02]{};
			\node(s11) at (5.5,0.87)[scale=.02]{};
			\node(s12) at (4.5,0.87)[scale=.02]{};
			
			\path[thin] (s7) edge (s8);
			\path[thin] (s8) edge (s9);
			\path[thin] (s9) edge (s10);
			\path[thin] (s10) edge (s11);	
			\path[thin] (s11) edge (s12);
			\path[thin] (s12) edge (s7);
			\path[thin] (s8) edge (s11);

			\node(s13) at (8,0)[scale=.02]{};
			\node(s14) at (8.5,-0.87)[scale=.02]{};
			\node(s15) at (9.5,-0.87)[scale=.02]{};
			\node(s16) at (10,0)[scale=.02]{};
			\node(s17) at (9.5,0.87)[scale=.02]{};
			\node(s18) at (8.5,0.87)[scale=.02]{};
			
			\path[thin] (s13) edge (s14);
			\path[thin] (s14) edge (s15);
			\path[thin] (s15) edge (s16);
			\path[thin] (s16) edge (s17);	
			\path[thin] (s17) edge (s18);
			\path[thin] (s18) edge (s13);
			\path[thin] (s18) edge (s15);
			
		\end{tikzpicture}
	\end{center}
	\caption{The three tiles as a new alphabet for the half-hex tiling.}
	\label{hexagonfigures}
\end{figure}
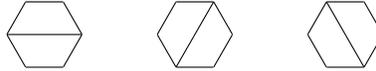

Using these full hexagons, we can define a \emph{pseudo inflation} (from \cite[Example 6.4]{baake2013aperiodic}), which is conjugated to the half-hex tiling as in \cref{PseudoInflationHalfHex}.

\begin{figure}[H]
	\begin{center}
		\begin{tikzpicture}[scale=0.5]
			\node(s1) at (0,0)[scale=.02]{};
			\node(s2) at (0.5,-0.87)[scale=.02]{};
			\node(s3) at (1.5,-0.87)[scale=.02]{};
			\node(s4) at (2,0)[scale=.02]{};
			\node(s5) at (1.5,0.87)[scale=.02]{};
			\node(s6) at (0.5,0.87)[scale=.02]{};
			
			\path[thin] (s1) edge (s2);
			\path[thin] (s2) edge (s3);
			\path[thin] (s3) edge (s4);
			\path[thin] (s4) edge (s5);	
			\path[thin] (s5) edge (s6);
			\path[thin] (s6) edge (s1);
			\path[thin] (s1) edge (s4);
			
			\node(s7) at (3,0)[scale=0.2]{};
			\node(s8) at (5,0)[scale=0.2]{};
			
			\path[thin,->] (s7) edge (s8);
			
			\node(s9) at (8,2.6)[scale=.02]{};
			\node(s10) at (9,2.6)[scale=.02]{};
			\node(s11) at (6.5,1.73)[scale=.02]{};
			\node(s12) at (7.5,1.73)[scale=.02]{};
			\node(s13) at (9.5,1.73)[scale=.02]{};
			\node(s14) at (10.5,1.73)[scale=.02]{};
			\node(s15) at (6,0.87)[scale=.02]{};
			\node(s16) at (8,0.87)[scale=.02]{};
			\node(s17) at (9,0.87)[scale=.02]{};
			\node(s18) at (11,0.87)[scale=.02]{};
			\node(s19) at (6.5,0)[scale=.02]{};
			\node(s20) at (7.5,0)[scale=.02]{};
			\node(s21) at (9.5,0)[scale=.02]{};
			\node(s22) at (10.5,0)[scale=.02]{};
			\node(s23) at (6,-0.87)[scale=.02]{};
			\node(s24) at (8,-0.87)[scale=.02]{};
			\node(s25) at (9,-0.87)[scale=.02]{};
			\node(s26) at (11,-0.87)[scale=.02]{};
			\node(s27) at (6.5,-1.73)[scale=.02]{};
			\node(s28) at (7.5,-1.73)[scale=.02]{};
			\node(s29) at (9.5,-1.73)[scale=.02]{};
			\node(s30) at (10.5,-1.73)[scale=.02]{};
			\node(s31) at (8,-2.6)[scale=.02]{};
			\node(s32) at (9,-2.6)[scale=.02]{};
			
			\path[thin] (s9) edge (s10);
			\path[thin] (s10) edge (s13);
			\path[thin] (s13) edge (s14);
			\path[thin] (s14) edge (s18);	
			\path[thin] (s18) edge (s22);
			\path[thin] (s22) edge (s26);
			\path[thin] (s26) edge (s30);
			\path[thin] (s30) edge (s29);
			\path[thin] (s29) edge (s32);
			\path[thin] (s32) edge (s31);
			\path[thin] (s31) edge (s28);	
			\path[thin] (s28) edge (s27);
			\path[thin] (s27) edge (s23);
			\path[thin] (s23) edge (s19);
			\path[thin] (s19) edge (s15);
			\path[thin] (s15) edge (s11);
			\path[thin] (s11) edge (s12);
			\path[thin] (s12) edge (s9);	
			\path[thin] (s12) edge (s13);
			\path[thin] (s13) edge (s22);
			\path[thin] (s22) edge (s29);
			\path[thin] (s29) edge (s28);
			\path[thin] (s28) edge (s19);
			\path[thin] (s19) edge (s12);
			\path[thin] (s12) edge (s16);	
			\path[thin] (s16) edge (s17);
			\path[thin] (s17) edge (s13);
			\path[thin] (s17) edge (s21);
			\path[thin] (s21) edge (s22);
			\path[thin] (s21) edge (s25);
			\path[thin] (s25) edge (s29);
			\path[thin] (s24) edge (s25);	
			\path[thin] (s24) edge (s28);
			\path[thin] (s24) edge (s20);
			\path[thin] (s19) edge (s20);
			\path[thin] (s16) edge (s20);
			\path[thin] (s20) edge (s21);
			
			\filldraw[color=gray,opacity=0.5,thick](8,2.6)--(7.5,1.73)--(8,0.87)--(7.5,0)--(8,-0.87)--(9,-0.87)--(9.5,-1.73)--(10.5,-1.73)--(11,-0.87)--(10.5,0)--(11,0.87)--(10.5,1.73)--(9.5,1.73)--(9,2.6)--(8,2.6)--cycle;
			
			\node(s33)at (8.5,0)[scale=0.2]{};
			\node(s34)at (8.5,1.73)[scale=0.2]{};
			\node(s35) at (10,0.87)[scale=0.2]{};
			
			\path[thin,->,red] (s33) edge (s35);
			\path[thin,->,red] (s33) edge (s34);
			
			\node(s36) at (9.75,0.37)[scale=1,red]{${\bm u}$};
			\node(s37) at (8.7,1.1)[scale=1,red]{${\bm v}$};
			
		\end{tikzpicture}
	\end{center}
	\caption{New tile-substitution conjugate to the half-hex tiling, with a discrete 2-dimensional tranlsation-invariant subaction in $\R^{2}$.}
	\label{PseudoInflationHalfHex}
\end{figure}
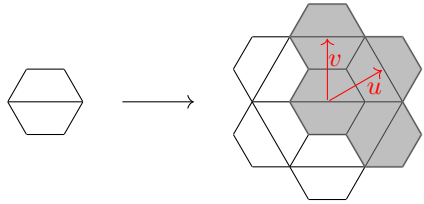

From this pseudo inflation, we construct a tiling substitution with only the four shaded hexagons in \cref{PseudoInflationHalfHex}. In this tiling substitution, there is an invariant discrete lattice $\Lambda\subseteq \R^{2}$ generated by the centers of these hexagons, using the vectors ${\bm u}$ and ${\bm v}$ as depicted in \cref{PseudoInflationHalfHex}. The discrete translation $\Lambda$-subaction is conjugate to the substitutive subshift associated with the following constant-shape substitution, called \emph{half-hex substitution}, $\zeta_{hh}$ with an expansion matrix $L_{hh}=2\cdot \Id_{\R^{2}}$ and support $F_{1}^{hh}=\{(0,0),(1,0),(0,1),(1,-1)\}$
$$\begin{array}{llllllllllllll}
	&  & 0 &  &  &  &  & 0 &  &  &  &  & 0 &  \\ 
	0 & \mapsto & 0 & 2 &  & 1 & \mapsto & 1 & 2 &  & 2 & \mapsto & 2 & 2 \\ 
	&  &  & 1 &  &  &  &  & 1 &  &  &  &  & 1. \\ 
\end{array}$$

We recall that the notation and the notions we will use are summarized in Section \ref{sec:SubstSubshift} about substitutive subshifts.
A straightforward computation, based on \cite[Theorem 4.8]{kirat2010remarksselfaffine}, reveals that the extreme points of the convex hull of $F_{n}^{hh}$  are $\{(0,0),(0,2^{n}-1),(2^{n}-1,0),(2^{n}-1,1-2^{n})\}$. Since $F_n^{hh}$ is a fundamental domain of $2^n\Z^2$, it has cardinality $4^{n}$. Furthermore, $F_{n}^{hh} \subset \conv(F_{n}^{hh})\cap \Z^{2}$, where $\conv(F_{n}^{hh})$ denotes the convex hull of $F_n^{hh}$. Actually, the Pick formula provides that the cardinality of $\conv(F_{n}^{hh})\cap \Z^{2}$ and  $F_n^{hh}$ are the same, so $F_{n}^{hh}=  \conv(F_{n}^{hh})\cap \Z^{2}$.
It then follows that $(F_{n}^{hh})_{n\geq 0}$ is a F\o lner sequence.

\begin{figure}[H]
\begin{tikzpicture}[scale=0.5]
				\draw[help lines, color=gray!30, dashed] (-0.9,-1.9) grid (1.9,2.9);
				\draw[->,thick] (-1,0)--(2,0) node[right]{$x$};
				\draw[->,thick] (0,-2)--(0,2) node[above]{$y$};
				
				\coordinate (A1) at (0,0);
				\coordinate (A2) at (0,1);
				\coordinate (A3) at (1,0);
				\coordinate (A4) at (1,-1);
				
				\draw (A4) -- (A1) -- (A2) -- (A3) -- (A4);         
				
				\fill [blue!20, fill opacity = .5]   (A4)--(A1)--(A2)--(A3)--(A4)--cycle;
				
				\foreach \v in {A1,A2,A3,A4}  \draw[fill=gray] (\v) circle (2pt);
				
				\node (b1) at (-0.5,1.2)[scale=1]{$F_{1}$};

				\draw[help lines, color=gray!30, dashed] (8.1,-2.9) grid (13.9,3.9);
				\draw[->,thick] (8,0)--(14,0) node[right]{$x$};
				\draw[->,thick] (10,-3)--(10,3) node[above]{$y$};
				
				\coordinate (A5) at (10,0);
				\coordinate (A6) at (10,1);
				\coordinate (A7) at (10,2);
				\coordinate (A8) at (10,3);
				\coordinate (A9) at (11,-1);
				\coordinate (A10) at (11,0);
				\coordinate (A11) at (11,1);
				\coordinate (A12) at (11,2);
				\coordinate (A13) at (12,-2);
				\coordinate (A14) at (12,-1);
				\coordinate (A15) at (12,0);
				\coordinate (A16) at (12,1);
				\coordinate (A17) at (13,-3);
				\coordinate (A18) at (13,-2);
				\coordinate (A19) at (13,-1);
				\coordinate (A20) at (13,0);
				
				\draw (A5) -- (A8) -- (A20) -- (A17) -- (A5);         
				
				\fill [blue!20, fill opacity = .5]   (A5) -- (A8) -- (A20) -- (A17) -- (A5)--cycle;
				
				\foreach \v in {A5,A6,A7,A8,A9,A10,A11,A12,A13,A14,A15,A16,A17,A18,A19,A20}  \draw[fill=gray] (\v) circle (2pt);
				
				\node (b2) at (8.5,1.2)[scale=1]{$F_{2}$};
			\end{tikzpicture}
			\caption{The first two supports of the half-hex substitution.}
			\end{figure}
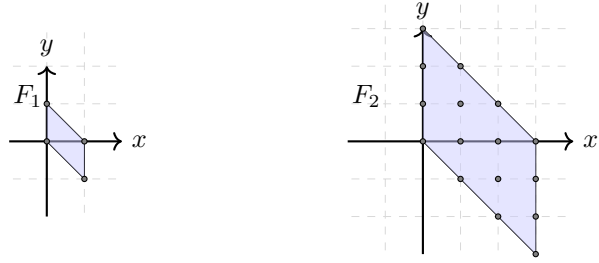

Inspired by the half-hex substitution, we consider an integer expansion matrix $L\in \mathcal{M}(d,\Z)$ with $|\det(L)|\geq 3$, a fundamental domain $F_{1}$ of $L(\Z^{d})$ in $\Z^{d}$, and set the finite alphabet $ \A= F_1\setminus\{\bm 0\}$. We define the substitution $\sigma_{L} \colon \A  \to \A^{F_1}$ as follows:
\begin{align}\label{SubStitutionToeplitzInfiniteSymmetries} \forall a \in \A,\quad \sigma_L(a)_{{\bm f}}=\left\{\begin{array}{cl}
	a & \text{ when } {\bm f}={\bm 0},\\
	{\bm f} & \text{ when } {\bm f}\neq {\bm 0}.
\end{array}\right.
\end{align}

Under the hypothesis that the sequence of supports $(F_{n})_{n>0}$ is a F\o lner sequence, we get the substitutive subshift $(X_{\sigma_{L}},S,\Z^{d})$.
It is important to notice that all the patterns $\sigma_L(a)$ coincide except at the origin, where the letter is uniquely determined.

For computational purposes, we introduce the map 
\begin{align}
    \tau \colon  \bm n \in \Z^d \setminus \{ {\bm 0}\} \mapsto  {\bm f} \in \A,
\end{align}
 where ${\bm n}= L^{p+1}({\bm z}) + L^p({\bm f})$  with ${\bm z} \in \Z^d$, ${\bm f} \in  \A$ and $p$ is the smallest integer such that  $\bm n \not\in L^{p+1}(\Z^d)$. The value $p$ serves as a multidimensional  $L$-adic valuation of $\bm n$. A motivation to introduce this map is due to the next formula that enables to compute the value of a $\sigma_L$-fixed point $\bar{x}$ at some position only by the knowledge of this position. More precisely, it is straightforward to check that
\begin{align}\label{eq:Fixedpoint}
    \forall {\bm n} \neq  {\bm 0} \in \Z^d, \quad \bar{x}_{\bm n} = \tau (\bm n).
\end{align}
This property is typical for of automatic sequences. 
As a consequence, $\sigma_{L}$ has exactly $|\A|= |\det L| - 1$ fixed points in $X_{\sigma_{L}}$, and they all coincide except at the origin. Moreover, we have the following standard  recognizability property. 

\begin{lemma}\label{lem:reconagizability} 
Let $\overline{x}$ be a fixed point of $\sigma_{L}$, then for any integer $n>0$ and any ${\bm a},{\bm b}\in \Z^{d}\setminus\{0\}$,
$$\overline{x}_{{\bm a}+F_{n}}=\overline{x}_{{\bm b}+F_{n}} \implies {\bm a}\equiv {\bm b}\ (\text{mod}\ L^{n}(\Z^{d})).$$
In particular, if the sequence of supports of the iterations $\sigma_{L}^{n}$ is  F\o lner, then the substitution $\sigma_{L}$ is recognizable on any fixed point $\overline{x}$ of ${\sigma_L}$, so $\sigma_{L}$ is aperiodic.
\end{lemma}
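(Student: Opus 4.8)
The plan is to exploit the explicit description of a fixed point given by Equation \eqref{eq:Fixedpoint}, namely $\overline{x}_{\bm n} = \tau(\bm n)$ for all ${\bm n}\neq {\bm 0}$. The key observation is that the map $\tau$ encodes an $L$-adic valuation: if $p(\bm n)$ denotes the smallest integer with ${\bm n}\notin L^{p+1}(\Z^d)$, then $\tau(\bm n)$ records both the ``depth'' $p(\bm n)$ (implicitly, through which coset structure it lands in) and the residue ${\bm f}\in F_1\setminus\{\bm 0\}$ appearing in the expansion ${\bm n}=L^{p+1}({\bm z})+L^p({\bm f})$. First I would make precise the following sub-claim: for ${\bm n}\neq {\bm 0}$ and any $k\geq 0$, one has ${\bm n}\in L^k(\Z^d)$ if and only if $\overline{x}_{{\bm n}+L^k F_{?}}$... — more cleanly, I would show that from the restriction $\overline{x}_{{\bm a}+F_n}$ one can read off the value $p(\bm a)$ whenever $p(\bm a)<n$, and when $p(\bm a)\geq n$ one can at least detect that $p(\bm a)\geq n$. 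This is because within a translate of $F_n$ the pattern of $\tau$-values determines, position by position, the $L$-adic depth of each entry relative to ${\bm a}$.

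The main step is then: suppose $\overline{x}_{{\bm a}+F_n}=\overline{x}_{{\bm b}+F_n}$. I want to conclude ${\bm a}\equiv {\bm b}\pmod{L^n(\Z^d)}$. I would argue by contradiction, or better, by locating inside $F_n$ a distinguished position. Since $F_n$ is a fundamental domain of $L^n(\Z^d)$ containing ${\bm 0}$, it contains representatives of all cosets; in particular, for the coset of ${\bm b}-{\bm a}$ modulo $L^n(\Z^d)$ there is a unique ${\bm f}_0\in F_n$ with ${\bm f}_0\equiv {\bm b}-{\bm a}\pmod{L^n(\Z^d)}$... actually the cleaner route: consider the position ${\bm c}\in {\bm a}+F_n$ with ${\bm c}\equiv {\bm 0}\pmod{L^n(\Z^d)}$ — there is exactly one such ${\bm c}$ since $F_n$ is a fundamental domain — so ${\bm c}\in L^n(\Z^d)$, hence $p({\bm c})\geq n$ (as ${\bm c}\in L^n(\Z^d)\subseteq L^{n}(\Z^d)$, so ${\bm c}\notin L^{p+1}(\Z^d)$ forces $p+1>n$, i.e. $p\geq n$), and moreover ${\bm c}$ is the \emph{only} position in ${\bm a}+F_n$ whose $\tau$-value corresponds to depth $\geq n$, because the other positions ${\bm a}+{\bm f}$ with ${\bm f}\in F_n\setminus\{\text{that one}\}$ differ from an element of $L^n(\Z^d)$ by a nonzero element of $F_n$, which has $L$-adic depth $<n$ (here one uses that $F_n$ consists of representatives and $F_{n}=L(F_{n-1})+F_1$ has the nested structure, so a nonzero ${\bm g}\in F_n\smallsetminus L(\Z^d)\cdot(\dots)$ has controlled depth). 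Thus from $\overline{x}_{{\bm a}+F_n}$ one recovers the unique position of maximal depth, which is the representative of the zero coset; the equality of patterns then forces the zero-coset representative of ${\bm a}+F_n$ and of ${\bm b}+F_n$ to sit at the same relative position in $F_n$, giving ${\bm a}\equiv {\bm b}\pmod{L^n(\Z^d)}$. Care is needed at the boundary case where ${\bm a}={\bm 0}$ or ${\bm b}={\bm 0}$ (since $\overline{x}_{\bm 0}$ is the ``free'' letter not given by $\tau$), but the hypothesis excludes ${\bm a},{\bm b}={\bm 0}$, and even if ${\bm 0}\in {\bm a}+F_n$ one handles it by noting $\overline{x}_{\bm 0}\in\A$ is still a depth-$\geq n$ type symbol in the sense needed, or by enlarging $n$ harmlessly.

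Finally, the ``in particular'' clause: once recognizability on $\overline{x}$ in the above combinatorial sense is established, and assuming the sequence of supports $(F_n^{\sigma_L})_{n>0}$ is F\o lner, I would invoke the standard equivalence (as recalled in Section \ref{sec:SubstSubshift}, following \cite{cabezas2021homomorphisms,solomyakrecognizability}) between this local uniqueness of desubstitution and the recognizability property in the sense of the Definition in that section, which in turn implies aperiodicity of $(X_{\sigma_L},S,\Z^d)$: a nontrivial period ${\bm p}$ of some point would contradict the injectivity statement by translating a large $F_n$-pattern onto an $L^n(\Z^d)$-incongruent copy of itself. The main obstacle I anticipate is the careful bookkeeping in the middle step — verifying that within a translate of $F_n$ there is \emph{exactly one} position of $L$-adic depth $\geq n$ and that its $\tau$-value is distinguishable from all the others; this relies on the precise fundamental-domain/Følner structure $F_{n+1}=L(F_n)+F_1$ and on the fact that $\tau$ is injective enough on the relevant range. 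Everything else is routine manipulation of the formula \eqref{eq:Fixedpoint}.
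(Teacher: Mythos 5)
There is a genuine gap at the heart of your middle step. You propose to recover, from the pattern $\overline{x}_{{\bm a}+F_n}$ alone, the unique position ${\bm c}\in{\bm a}+F_n$ with ${\bm c}\equiv{\bm 0}\ (\text{mod}\ L^n(\Z^d))$, on the grounds that ``the pattern of $\tau$-values determines, position by position, the $L$-adic depth of each entry'' and that the deep position's ``$\tau$-value is distinguishable from all the others.'' Neither assertion is justified, and the second is false at the level of single letters: by \eqref{eq:Fixedpoint} the letter at any position is just an element of $F_1\setminus\{\bm 0\}$ (the digit at the valuation), so a position of depth $\geq n$ carries a perfectly ordinary letter that can coincide with letters at depth-$0$ positions — depth is not encoded in the symbol. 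Worse, deciding which positions of ${\bm a}+F_n$ are deep requires knowing the residue of ${\bm a}$ modulo powers of $L$, which is exactly the conclusion of the lemma; so as written, ``locate the distinguished position'' is equivalent to the statement being proved, not a step toward it. A symptom of the gap is that your sketch never uses the standing hypothesis $|\det L|\geq 3$ (i.e.\ $|\A|\geq 2$), although the lemma fails for $|\det L|=2$ (one letter, constant fixed point); any correct proof must invoke it.

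The repair is essentially the paper's argument, which you would need to supply: compare the two windows digit by digit, inducting on $n$. For the first digit, since $|\det L|\geq 3$ one can pick ${\bm f}\in F_n$ whose class mod $L(\Z^d)$ avoids both $-{\bm a}$ and $-{\bm b}$; at such ${\bm f}$ both entries are depth $0$, so \eqref{eq:Fixedpoint} gives $\tau({\bm a}+{\bm f})=\tau({\bm b}+{\bm f})$, forcing ${\bm a}\equiv{\bm b}\ (\text{mod}\ L(\Z^d))$ (this is the disambiguation your ``distinguishability'' claim silently assumes). Then, using $F_n=F_k+L^k(F_{n-k})$ and $\tau(L^k{\bm u})=\tau({\bm u})$, the positions of ${\bm a}+F_n$ lying in $L^k(\Z^d)$ (the same relative positions for ${\bm b}$, by the inductive hypothesis ${\bm a}\equiv{\bm b}\ (\text{mod}\ L^k(\Z^d))$) reproduce a smaller window of the fixed point, and one repeats the first-digit argument to upgrade the congruence to $L^{k+1}(\Z^d)$, with a little care at the exceptional coordinate ${\bm 0}$ where \eqref{eq:Fixedpoint} does not apply. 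Your toolkit (the formula \eqref{eq:Fixedpoint}, the nested structure of the $F_n$) is the right one, and your treatment of the ``in particular'' clause is fine, but the decisive comparison/induction step is missing and cannot be replaced by reading off depths from the letters.
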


\begin{proof}
We prove the claim  by induction on $n>0$. We start with the base case $n=1$.

Suppose ${\bm a}\notin L(\Z^{d})$, i.e., ${\bm a}=L({\bm c})+{\bm g}$ with ${\bm g}\in F_{1}\setminus\{{\bm 0}\}$. If $b\notin L(\Z^{d})$, then ${\bm b}=L({\bm d})+{\bm h}$ with ${\bm h}\in F_{1}\setminus {\bm 0}$. Since $\overline{x}$ is a fixed point of $\sigma_{L}$, we have that $\overline{x}_{{\bm a}}=\sigma(x_{{\bm c}})_{{\bm g}}={\bm g}=\sigma(x_{{\bm d}})_{{\bm h}}$, so ${\bm g}={\bm h}$, which implies that ${\bm a}\equiv {\bm b}\ (\text{mod}\ L(\Z^{d}))$. If $b\in L(\Z^{d})$, then for any ${\bm f}\in F_{1}\setminus\{{\bm 0}\}$ we have that $\overline{x}_{{\bm b}+{\bm f}}={\bm f}=\overline{x}_{{\bm a}+{\bm f}}$. We consider ${\bm f}\in F_{1}\setminus \{{\bm 0}\}$ such that ${\bm a}+{\bm f}\notin L(\Z^{d})$, i.e., ${\bm a}+{\bm f}=L({\bm e})+{\bm h}$, so $\overline{x}_{{\bm a}+{\bm f}}={\bm h}$, and  ${\bm h}={\bm f}$, i.e., ${\bm a}\in L(\Z^{d})$ which is a contradiction.

Now, suppose there exists some $n>0$ such that $\overline{x}_{{\bm a}+F_{n}}=\overline{x}_{{\bm b}+F_{n}} \implies {\bm a}\equiv {\bm b}\ (\text{mod}\ L^{n}(\Z^{d}))$. Let  ${\bm a},{\bm b}\in \Z^{d}$ be such that
$$\overline{x}_{{\bm a}+F_{n+1}}=\overline{x}_{{\bm b}+F_{n+1}}.$$

Since $F_{n}\subseteq F_{n+1}$, by the induction hypothesis we have that ${\bm a}\equiv {\bm b}\ (\text{mod}\ L^{n}(\Z^{d}))$. We recall that $F_{n+1}=F_{n}+L^{n}(F_{1})$, so we write
$$
    {\bm a}  = L^{n+1}({\bm c}) + {\bm f}+L^{n}({\bm g}), \quad
    {\bm b}  = L^{n+1}({\bm d}) + {\bm f}+L^{n}({\bm h})
$$

\noindent for some ${\bm f}\in F_{n}$, ${\bm g},{\bm h}\in F_{1}$ and ${\bm c},{\bm d}\in \Z^{d}$. We prove that ${\bm g}={\bm h}$. If ${\bm f}={\bm 0}$ we can use a similar argument as for the case $n=1$ to conclude that ${\bm g}={\bm h}$. Suppose then ${\bm f}\neq {\bm 0}$. We consider ${\bm j}\in F_{n+1}$ such that ${\bm f}\equiv- {\bm j}\ (\text{mod}\ L^{n}(\Z^{d}))$, so
$$    {\bm a}+{\bm j}  = L^{n+1}({\bm c}_{1})+L^{n}({\bm g})\quad 
    {\bm b}+{\bm j}  = L^{n+1}({\bm d}_{1})+L^{n}({\bm h}),
$$
\noindent for some ${\bm c}_{1},{\bm d}_{1}\in \Z^{d}$.  Since $\overline{x}$ is a fixed point of $\sigma_{L}$, we get that
\begin{align*}
\overline{x}_{{\bm a}+{\bm j}} & = \sigma_{L}^{n-2}(\sigma_{L}( \sigma_{L}(\overline{x})_{{\bm c}_{1}})_{{\bm g}})_{{\bm 0}} = {\bm g}\\
\overline{x}_{{\bm b}+{\bm j}} & = \sigma_{L}^{n-2}(\sigma_{L}(\sigma_{L}(\overline{x})_{{\bm d}_{1}})_{{\bm h}})_{{\bm 0}} = {\bm h}.
\end{align*}

\noindent Recall that $\overline{x}_{{\bm a}+{\bm j}}=\overline{x}_{{\bm b}+{\bm j}}$,  hence ${\bm g}={\bm h}$ which implies that ${\bm a}\equiv {\bm b}\ (\text{mod}\ L^{n+1}(\Z^{d}))$.

\end{proof}

Recall that the map $\pi:(X_{\sigma_{L}},S,\Z^{d})\to (\overleftarrow{\Z^{d}}_{(L^{n})},{\bm +}, \Z^{d})$ is defined in Section \ref{sec:SubstSubshift}. 
\begin{proposition}\label{ParticularCaseForFibersOdometer}
	If the sequence of supports of the iterations $\sigma_{L}^{n}$ is  F\o lner, then  $\sigma_{L}$ is an aperiodic, primitive constant-shape substitution and the factor map $\pi:(X_{\sigma_{L}},S,\Z^{d})\to (\overleftarrow{\Z^{d}}_{(L^{n})},{\bm +},\Z^{d})$ is almost 1-to-1.
	
More precisely, we have  
	$$|\pi^{-1}(\{\overleftarrow{g}\})|=\left\{\begin{array}{cl}
		|\A| & \text{ if } \overleftarrow{g}\in \mathcal{O}(\overleftarrow{0},{\bm +}),\\
		1 &  \text{ otherwise.} 
	\end{array}\right.$$
\end{proposition}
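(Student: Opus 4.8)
The plan is to handle the assertions in order. Primitivity is free: since $\sigma_L(a)_{\bm f}={\bm f}$ for every $a\in\A$ and every ${\bm f}\in F_1\setminus\{{\bm 0}\}=\A$, every letter of $\A$ occurs in $\sigma_L(a)$, so $\sigma_L$ is primitive already at level $1$; it is constant-shape by construction, and aperiodicity is precisely \cref{lem:reconagizability} under the standing Følner hypothesis (recognizability on a fixed point implies aperiodicity). Hence all the work is in computing the fibres of $\pi$, and for this I would first isolate a strengthening of \eqref{eq:Fixedpoint}.

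\textbf{Structural lemma.} \emph{For every $n\ge 1$, every $y\in X_{\sigma_L}$ and every ${\bm w}\in\Z^d\setminus L^n(\Z^d)$ one has $\sigma_L^n(y)_{\bm w}=\tau({\bm w})$}; in particular $\sigma_L^n(y)$ and $\sigma_L^n(z)$ agree on $\Z^d\setminus L^n(\Z^d)$ for all $y,z$. I would prove this by induction on $n$: writing ${\bm w}=L({\bm j})+{\bm f}$ with ${\bm f}\in F_1$ and using $\sigma_L^{n+1}=\sigma_L\circ\sigma_L^n$ together with the defining identities $\sigma_L(b)_{\bm 0}=b$ and $\sigma_L(b)_{\bm f}={\bm f}$ for ${\bm f}\neq{\bm 0}$, the computation reduces to the elementary fact $\tau(L{\bm v})=\tau({\bm v})$ for ${\bm v}\neq{\bm 0}$, which is immediate from the definition of the $L$-adic valuation.

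Granting the lemma, the fibre over $\overleftarrow 0$ is handled as follows. A point $x$ satisfies $\pi(x)=\overleftarrow 0$ iff $\pi_n(x)={\bm 0}$ for all $n$, i.e.\ $x\in\sigma_L^n(X_{\sigma_L})$ for every $n$; writing $x=\sigma_L^n(x^{(n)})$ and comparing with any fixed point $\overline x=\sigma_L^n(\overline x)$, the lemma shows that $x$ and $\overline x$ agree on $\Z^d\setminus L^n(\Z^d)$ for each $n$, and since $L$ is expanding $\bigcap_{n}L^n(\Z^d)=\{{\bm 0}\}$, so $x$ and $\overline x$ agree off the origin. Conversely, a point of $X_{\sigma_L}$ agreeing with a fixed point off the origin is itself a $\sigma_L$-fixed point (one checks $\sigma_L(x)=x$ directly using the lemma with $n=1$ and $\tau(L{\bm v})=\tau({\bm v})$), and the $|\A|$ fixed points visibly lie in $\pi^{-1}(\{\overleftarrow 0\})$. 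Thus $\pi^{-1}(\{\overleftarrow 0\})$ is exactly the set of $|\A|$ fixed points. Equivariance of $\pi$ gives $\pi^{-1}(\{{\bm n}{\bm +}\overleftarrow 0\})=S^{\bm n}\,\pi^{-1}(\{\overleftarrow 0\})$, so every fibre over a point of $\mathcal{O}(\overleftarrow 0,{\bm +})$ has exactly $|\A|$ points.

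Finally, for $\overleftarrow g\notin\mathcal{O}(\overleftarrow 0,{\bm +})$ I would show the fibre is a singleton. Take $x,y\in\pi^{-1}(\{\overleftarrow g\})$; for each $n$ one has $x=S^{{\bm f}_n}\sigma_L^n(x^{(n)})$ and $y=S^{{\bm f}_n}\sigma_L^n(y^{(n)})$ with the \emph{same} representative ${\bm f}_n=\pi_n(x)=\pi_n(y)\in F_n$, so by the lemma $x_{\bm m}$ and $y_{\bm m}$ both equal $\tau({\bm m}+{\bm f}_n)$ — and hence each other — as soon as ${\bm m}+{\bm f}_n\notin L^n(\Z^d)$. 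If ${\bm m}+{\bm f}_n\in L^n(\Z^d)$ held for every $n$, then $({\bm m}\bmod L^n(\Z^d))_n=(-{\bm f}_n\bmod L^n(\Z^d))_n=-\overleftarrow g$, i.e.\ ${\bm m}{\bm +}\overleftarrow 0=-\overleftarrow g$; but $\mathcal{O}(\overleftarrow 0,{\bm +})$ is a subgroup of the odometer, so this would force $\overleftarrow g\in\mathcal{O}(\overleftarrow 0,{\bm +})$, a contradiction. Hence $x_{\bm m}=y_{\bm m}$ for all ${\bm m}$, i.e.\ $x=y$, which yields the displayed fibre formula. Since the odometer is uncountable while $\mathcal{O}(\overleftarrow 0,{\bm +})$ is countable, $\pi$ has a point with a unique preimage, and as the base odometer is minimal this forces $\pi$ to be almost $1$-to-$1$. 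The main obstacle is really the structural lemma and, with it, the careful passage between elements of $\Z^d$, their $F_n$-representatives and the coordinates of the odometer; once this bookkeeping is set up, the rest is formal.
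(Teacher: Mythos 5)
Your proposal is correct and takes essentially the same route as the paper: the decisive fact in both arguments is that $\sigma_L^n$-images are independent of the preimage at every position outside $L^n(\Z^{d})$, which forces two distinct points of a common fiber to differ only at a single coordinate $-{\bm a}$ with $\overleftarrow{g}=\kappa_{(L^{n})}(-{\bm a})$, hence $\overleftarrow{g}\in\mathcal{O}(\overleftarrow{0},{\bm +})$. Your explicit structural lemma (the value is $\tau({\bm w})$), the identification of $\pi^{-1}(\{\overleftarrow{0}\})$ with the $|\A|$ fixed points, and the equivariance step merely spell out what the paper's proof uses implicitly, including the cardinality count over the orbit of $\overleftarrow{0}$.
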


In particular, the subshift $X_{\sigma_L}$ is a substitutive Toeplitz subshift and its maximal equicontinuous factor is  $\overleftarrow{\Z^{d}}_{(L^{n})}$. 
As an explicit  example, the substitutive subshift $X^{hh}$  associated with the half-hex substitution $\zeta_{hh}$ is an almost 1-to-1 extension of the constant-base odometer $\overleftarrow{\Z^{2}}_{(2^{n} \Z^{2})}$.

\begin{proof}
Since $\tau$ is a bijection, $\sigma_{L}$ is a primitive substitution. 
The aperiodicity follows from the recognizability given by \cref{lem:reconagizability}.
	
Now we study the fibers $\pi^{-1}(\{\overleftarrow{g}\})$ for $\overleftarrow{g}= (g_n)_n\in \overleftarrow{\Z^{d}}_{(L^{n})}$. Suppose $|\pi^{-1}(\{\overleftarrow{g}\})|\geq 2$ and set $x_{1},x_{2}\in \pi^{-1}(\{\overleftarrow{g}\})$, i.e., for any $n>0$ there exists $y_{1}^{(n)},y_{2}^{(n)}\in X_{\sigma_{L}}$ such that $x_{i}=S^{{g}_{n}}\sigma_{L}^{n}(y_{i}^{(n)})$, for $i\in \{1,2\}$. Let ${\bm a}\in \Z^{d}$ such that $x_{1{\bm a}}\neq x_{2{\bm a}}$. This implies that $\sigma_{L}^{n}(y_{1}^{(n)})_{{\bm a}+{g}_{n}}\neq \sigma_{L}^{n}(y_{2}^{(n)})_{{\bm a}+{g}_{n}}$. 
For every $n>0$, we write ${\bm a}+{g}_{n}=L^{n}({\bm b}_{n})+{\bm f}_{n}$, with ${\bm b}_{n}\in \Z^{d}$ and ${\bm f}_{n}\in F_{n}$. Since for $i\in \{1,2\}$ we have that $\sigma_{L}^{n}(y_{i}^{(n)})_{{\bm a}+{\bm g}_{n}}=\sigma_{L}^{n}(y_{i}^{(n)}({\bm b}_{n}))_{{\bm f}_{n}}$ and these letters are different, then for any $n>0$, ${\bm f}_{n}$ must be ${\bm 0}\in F_{n}$. This implies that for every $n>0$
	\begin{align*}
		{\bm g}_{n}\equiv -{\bm a}\ (\text{mod}\ L^{n}(\Z^{d})).
	\end{align*}
	
	Hence $\overleftarrow{g}=\kappa_{(L^{n})}(-{\bm a})$, i.e., $\overleftarrow{g}\in \mathcal{O}(\overleftarrow{0},{\bm +})$.
	It follows that $x_1$ and $x_2$ are in the orbit of two fixed points of $\sigma_L$.
	In this case, $|\pi^{-1}(\overleftarrow{g})|$ has cardinality $|\A|$ and they only differ in the coordinate $-\kappa_{(L^{n})}^{-1}(\overleftarrow{g})$. If $\overleftarrow{g}$ is not in $\mathcal{O}(\overleftarrow{0},{\bm +})$, then $\pi^{-1}(\overleftarrow{g})$ has cardinality 1. We conclude that the factor map $\pi:(X_{\sigma_{L}},S,\Z^{d})\to (\overleftarrow{\Z^{d}}_{(L^{n})},{\bm +},\Z^{d})$ is almost 1-to-1.
\end{proof}
As a consequence of  \cref{ParticularCaseForFibersOdometer}, we get the following property on the $\GL$-endomorphisms of $X_{\sigma_{L}}$.

\begin{corollary}\label{cor:FixedPoint}  
Assume the sequence of supports of the iterations $\sigma_{L}^{n}$ is  F\o lner. Then any $\GL$-endomorphism $\phi \in N(X_{\sigma_{L}},S)$ maps a $\sigma_{L}$-fixed point onto a shift of a $\sigma_{L}$-fixed point. 
\end{corollary}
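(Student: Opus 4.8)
The plan is to push the statement down to the maximal equicontinuous factor and then read off the conclusion from the explicit fiber structure of $\pi$ established in \cref{ParticularCaseForFibersOdometer}. First I would fix $\phi\in N_{M}(X_{\sigma_{L}},S)$ for some $M\in\GL$. By \cref{ParticularCaseForFibersOdometer}, $\pi\colon(X_{\sigma_{L}},S,\Z^{d})\to(\overleftarrow{\Z^{d}}_{(L^{n})},{\bm +},\Z^{d})$ is the maximal equicontinuous factor, so \cref{MaximalEquicontinuousFactorCompatibleWithHomomorphisms} tells us $\pi$ is compatible with $\GL$-endomorphisms, and \cref{CompatibilityPropertyFactors} then yields an induced $\GL$-endomorphism $\hat{\pi}(\phi)\in N_{M}(\overleftarrow{\Z^{d}}_{(L^{n})})$ with $\hat{\pi}(\phi)\circ\pi=\pi\circ\phi$. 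Since odometers are coalescent and their linear representation semigroups are groups (\cref{cor:CorollariesNormalizerConditionOdometer1}), $\hat{\pi}(\phi)$ is invertible, and combining \cref{lem:DescriptAutEquicont} with the splitting of \cref{cor:CorollariesNormalizerConditionOdometer2} one writes $\hat{\pi}(\phi)\colon\overleftarrow{g}\mapsto\phi^{M}(\overleftarrow{g})+\overleftarrow{h}$ for some $\overleftarrow{h}\in\overleftarrow{\Z^{d}}_{(L^{n})}$, where $\phi^{M}$ is the group endomorphism induced by $M$ and fixes the neutral element $\overleftarrow{0}$.

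The crux is to prove that $\overleftarrow{h}$ lies in $\mathcal{O}(\overleftarrow{0},{\bm +})=\kappa_{(L^{n})}(\Z^{d})$. For this I would use item \ref{CompatibilityPropertyEnumerateiii} of \cref{CompatibilityPropertyFactors} applied to $\phi$: since $|\det L|\ge 3$ we have $|\A|=|\det L|-1\ge 2$, and by \cref{ParticularCaseForFibersOdometer} the set $\{\overleftarrow{g}:|\pi^{-1}(\{\overleftarrow{g}\})|\ge|\A|\}$ is exactly $\mathcal{O}(\overleftarrow{0},{\bm +})$. Hence $\mathcal{O}(\overleftarrow{0},{\bm +})\subseteq\hat{\pi}(\phi)\bigl(\mathcal{O}(\overleftarrow{0},{\bm +})\bigr)$. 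A short computation, using $\phi^{M}({\bm n}{\bm +}\overleftarrow{0})=M{\bm n}{\bm +}\overleftarrow{0}$ together with $M\Z^{d}=\Z^{d}$ (as $M\in\GL$), shows that $\hat{\pi}(\phi)\bigl(\mathcal{O}(\overleftarrow{0},{\bm +})\bigr)=\mathcal{O}(\overleftarrow{h},{\bm +})$. Thus the two $\Z^{d}$-orbits $\mathcal{O}(\overleftarrow{0},{\bm +})$ and $\mathcal{O}(\overleftarrow{h},{\bm +})$ intersect, hence coincide, so $\overleftarrow{h}=\kappa_{(L^{n})}({\bm k})$ for some ${\bm k}\in\Z^{d}$.

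To finish, let $\overline{x}$ be a $\sigma_{L}$-fixed point. Then $\overline{x}=\sigma_{L}^{n}(\overline{x})\in\sigma_{L}^{n}(X_{\sigma_{L}})$ for every $n$, so $\pi(\overline{x})=\overleftarrow{0}$, and therefore $\pi(\phi(\overline{x}))=\hat{\pi}(\phi)(\overleftarrow{0})=\overleftarrow{h}={\bm k}{\bm +}\overleftarrow{0}$, i.e.\ $S^{-{\bm k}}\phi(\overline{x})\in\pi^{-1}(\{\overleftarrow{0}\})$. By \cref{ParticularCaseForFibersOdometer} this fiber has cardinality $|\A|$; since it contains the $|\A|$ pairwise distinct $\sigma_{L}$-fixed points (each mapping to $\overleftarrow{0}$ by the argument just given), the fiber $\pi^{-1}(\{\overleftarrow{0}\})$ is precisely the set of $\sigma_{L}$-fixed points. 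Consequently $S^{-{\bm k}}\phi(\overline{x})$ is a $\sigma_{L}$-fixed point, that is, $\phi(\overline{x})=S^{{\bm k}}\overline{y}$ for some $\sigma_{L}$-fixed point $\overline{y}$. The one genuinely delicate point is the inclusion $\overleftarrow{h}\in\mathcal{O}(\overleftarrow{0},{\bm +})$, for which the monotonicity of fiber cardinalities under $\hat{\pi}(\phi)$ (item \ref{CompatibilityPropertyEnumerateiii}) is exactly what is needed; the rest is bookkeeping with the factor map and the known structure of the normalizer of an odometer.
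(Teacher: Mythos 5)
Your proof is correct and follows essentially the same route as the paper: push $\phi$ to the maximal equicontinuous factor via compatibility (\cref{MaximalEquicontinuousFactorCompatibleWithHomomorphisms}, \cref{CompatibilityPropertyFactors}), invoke item \ref{CompatibilityPropertyEnumerateiii} together with the fiber description of \cref{ParticularCaseForFibersOdometer} to see that the $\overleftarrow{0}$-orbit (and hence its $\pi$-fiber, the orbits of the $\sigma_L$-fixed points) is preserved, and conclude up to a shift. Your extra details (the semidirect-product form of $\hat{\pi}(\phi)$ and the identification of $\pi^{-1}(\{\overleftarrow{0}\})$ with the fixed-point set) merely make explicit what the paper leaves implicit.
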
  
\begin{proof}
Since the factor map $\pi:X_{\sigma_{L}}\to \overleftarrow{\Z^{d}}_{(L^{n})}$ is almost 1-to-1 (\cref{ParticularCaseForFibersOdometer}), then the odometer system $\overleftarrow{\Z^{d}}_{(L^{n})}$ is the maximal equicontinuous factor of the substitutive subshift $(X_{\sigma_{L}},S,\Z^{d})$. So there exists a semigroup homomorphism $\hat{\pi}:N(X_{\sigma_{L}},S)\to N(\overleftarrow{\Z^{d}}_{(L^{n})})$ which is injective (\cref{CompatibilityPropertyFactors}). 
Recall that any equicontinuous system is coalescent, so any endomorphism is invertible and by \cref{CompatibilityPropertyEnumerateiii} of \cref{CompatibilityPropertyFactors},
any endomorphism $\phi$ satisfies

\begin{align*}
\left\{\overleftarrow{g}\in \overleftarrow{\Z^{d}}_{(L^{n})}\colon |\pi^{-1}(\{\overleftarrow{g}\})|=|\A|\right\}\subseteq \hat{\pi}(\phi)\left(\left\{\overleftarrow{g}\in \overleftarrow{\Z^{d}}_{(L^{n})}\colon |\pi^{-1}(\{\overleftarrow{g}\})|=|\A|\right\}\right).
\end{align*}
	
\noindent Since $\left\{\overleftarrow{g}\in \overleftarrow{\Z^{d}}_{(L^{n})}\colon |\pi^{-1}(\{\overleftarrow{g}\})|=|\A|\right\}$ is the orbit $\mathcal{O}(\overleftarrow{0},{\bm +}) =\kappa_{(L^{n})}(\Z^{d})$, 
it implies that $\phi$ maps the $\pi$-fiber of  the  orbit $\mathcal{O}(\overleftarrow{0},{\bm +})$ onto itself. This $\pi$-fiber consists of the orbits of $\sigma_L$-fixed points. It follows that, up to composing $\phi$ with a shift map, the image of a $\sigma_L$-fixed point $\bar{x}$ by $\phi$ is also a $\sigma_L$-fixed point. 
\end{proof}

We also characterize the $\GL$-endomorphisms of the substitutive subshift $(X_{\sigma_{L}},S,\Z^{d})$ by the following results. 
The first one concerns endomorphisms and automorphisms.

\begin{lemma}\label{LemmaNormalizerToeplitzSubstitution}
	Let $\sigma_{L}$ defined as \eqref{SubStitutionToeplitzInfiniteSymmetries} and assume the sequence of supports of the iterations $\sigma_{L}^{n}$ is  F\o lner. Then 
	\begin{itemize} 
	    \item $(X_{\sigma_{L}},S,\Z^{d})$ is coalescent, 
	    \item the automorphism group $\Aut(X_{\sigma_{L}},S)$ is trivial, i.e., consist only on the shifted transformations $S^{\bm n}$, ${\bm n} \in \Z^d$.  
	 \end{itemize} 
\end{lemma}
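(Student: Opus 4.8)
The plan is to prove the apparently stronger statement that \emph{every} endomorphism of $(X_{\sigma_{L}},S,\Z^{d})$ is one of the shift maps $S^{\bm m}$, $\bm m\in\Z^{d}$. Both assertions of the lemma are then immediate: each $S^{\bm m}$ is invertible, which gives coalescence, and $\langle S\rangle$ is exactly the set of such maps, which gives the triviality of $\Aut(X_{\sigma_{L}},S)$.

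The first step is to pass to the maximal equicontinuous factor. By \cref{ParticularCaseForFibersOdometer} the map $\pi\colon X_{\sigma_{L}}\to\overleftarrow{\Z^{d}}_{(L^{n})}$ is almost $1$-to-$1$ onto an equicontinuous system, so $\overleftarrow{\Z^{d}}_{(L^{n})}$ is the maximal equicontinuous factor of $X_{\sigma_{L}}$ and, by \cref{MaximalEquicontinuousFactorCompatibleWithHomomorphisms}, $\pi$ is compatible with endomorphisms. Then \cref{CompatibilityPropertyFactors} produces a semigroup homomorphism $\hat{\pi}\colon\End(X_{\sigma_{L}},S)\to\End(\overleftarrow{\Z^{d}}_{(L^{n})})$ which is injective, since $\pi$ has a singleton fiber and hence $\min_{y}|\pi^{-1}(\{y\})|=1$. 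Because odometers are equicontinuous, hence coalescent, $\End(\overleftarrow{\Z^{d}}_{(L^{n})})=\Aut(\overleftarrow{\Z^{d}}_{(L^{n})})$, which by \cref{lem:DescriptAutEquicont} consists exactly of the translations. Thus for a given $\phi\in\End(X_{\sigma_{L}},S)$ there is $\overleftarrow{h}\in\overleftarrow{\Z^{d}}_{(L^{n})}$ with $\hat{\pi}(\phi)(\overleftarrow{g})=\overleftarrow{g}+\overleftarrow{h}$ for every $\overleftarrow{g}$.

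The second step pins $\overleftarrow{h}$ down by testing on a $\sigma_{L}$-fixed point $\overline{x}$. On one hand $\overline{x}=\sigma_{L}^{n}(\overline{x})\in S^{\bm 0}\sigma_{L}^{n}(X_{\sigma_{L}})$ for all $n$, so $\pi(\overline{x})=\overleftarrow{0}$ and therefore $\hat{\pi}(\phi)(\pi(\overline{x}))=\overleftarrow{h}$. On the other hand, \cref{cor:FixedPoint} applies (an endomorphism is in particular an $\Id$-endomorphism in $N(X_{\sigma_{L}},S)$, and its proof only needs $\hat{\pi}(\phi)$ to be an invertible endomorphism of the odometer, which holds by coalescence), giving $\bm m\in\Z^{d}$ and a $\sigma_{L}$-fixed point $\overline{y}$ with $\phi(\overline{x})=S^{\bm m}\overline{y}$. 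Applying the factor map, $\overleftarrow{h}=\pi(\phi(\overline{x}))=\pi(S^{\bm m}\overline{y})=\bm m\,{\bm +}\,\overleftarrow{0}=\kappa_{(L^{n})}(\bm m)$. Hence $\hat{\pi}(\phi)$ is the translation by $\kappa_{(L^{n})}(\bm m)$, i.e.\ $\hat{\pi}(\phi)=\hat{\pi}(S^{\bm m})$, and injectivity of $\hat{\pi}$ forces $\phi=S^{\bm m}$. This proves that every endomorphism is a shift, and the two bullet points follow as explained.

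All the computations involved are elementary, so I do not expect a serious obstacle beyond carefully assembling the cited results; the genuine content — and the only place where the specific form of $\sigma_{L}$ enters — is the fiber rigidity of \cref{ParticularCaseForFibersOdometer}, namely that the set of odometer points with a non-singleton $\pi$-fiber is exactly the $\langle S\rangle$-orbit of $\overleftarrow{0}$. It is this that confines the translation vector $\overleftarrow{h}$ to $\kappa_{(L^{n})}(\Z^{d})$ and thereby rules out any ``exotic'' automorphism.
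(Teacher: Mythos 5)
Your proposal is correct and follows essentially the same route as the paper: both prove $\End(X_{\sigma_L},S)=\langle S\rangle$ by pushing an endomorphism through the almost 1-to-1 factor $\pi$ onto the odometer, using the injectivity of $\hat{\pi}$ from \cref{CompatibilityPropertyFactors}, identifying $\hat{\pi}(\phi)$ as a translation, and invoking \cref{cor:FixedPoint} (preservation of the $\overleftarrow{0}$-orbit, equivalently of the $\sigma_L$-fixed points) to force the translation vector into $\kappa_{(L^n)}(\Z^d)$, whence $\phi=S^{\bm m}$ and coalescence follows. Your write-up merely makes explicit a few steps the paper leaves implicit (e.g.\ $\pi(\overline{x})=\overleftarrow{0}$ and the passage $\End=\Aut$ for the odometer via coalescence).
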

\begin{proof}
First we prove that $\End(X_{\sigma_{L}},S)=\left\langle S\right\rangle$. We keep the notations of \cref{CompatibilityPropertyFactors}. Set $\phi \in \End(X_{\sigma_{L}},S)$. According to \cref{cor:FixedPoint}, $\hat{\pi}(\phi)$ is an endomorphism of the odometer, which means it is a translation, as proven in \cref{lem:DescriptAutEquicont}. Moreover, since it preserves the  $\overleftarrow{0}$-orbit, $\hat{\pi}(\phi)$ is a translation by some element $\kappa_{(L^{n})}({\bm n})$ with ${\bm n}\in \Z^d$. By definition of $\hat{\pi}$, this translation is $\hat{\pi} (S^{\bm n})$, thus equal to $ \hat{\pi}(\phi)$.  We conclude, by the injectivity of $\hat{\pi}$, that $\End(X_{\sigma_{L}},S)=\left\langle S\right\rangle$. As a consequence, $(X_{\sigma_{L}},S,\Z^{d})$ is a coalescent system.
\end{proof}
	
Now, we characterize the elements of their linear representation semigroups. For this, we introduce the following notations. 	Let $N_{L}$ be the group defined by equation \eqref{eq:NL}, that is 
\begin{align*}
        N_L= \left\{ M \in \GL: \exists n_0, \forall n,p\ge n_0 \begin{array}{l}
             L^{-n}ML^n \in \GL \\
             L^{-n}ML^n = L^{-p}ML^p \pmod {L(\Z^d)}
        \end{array} \right\}.
    \end{align*}

The crucial properties of an  element  $M$ of $N_{L}$ are the following: for each large enough integer  $n>0$, there is a $\Z^d$-automorphism $M_n$ such that 
$L^n M_n  = M L^n$. Moreover, each automorphism $M_n$ permutes the  $L(\Z^{d})$-cosets. With an abuse of notation, we will denote these permutations on $F_1 \setminus\{ \bm 0\}$ by $M_n \ (\text{mod } L(\Z^d))$.  These permutations are ultimately all the same, for large enough $n$. 
Linked with the computation of the digits of  fixed points, we have the following relation 
\begin{align}\label{eq:CommutationTauM}
    \tau \circ M (\bm n)  = M_{p} \circ \tau (\bm n)  \ (\text{mod } L(\Z^d)),  
\end{align}
where $p$ is the smallest integer such that $\bm n \not\in L^{p+1}(\Z^d)$.  
	
\begin{lemma}\label{Lem:LinearRepresentation}
Assume that the sequence of supports of the iterations $\sigma_{L}^{n}$ is  F\o lner. Then the linear representation  semigroup $\vec{N}(X_{\sigma_{L}},S)$ is  the linear  group $N_{L}$.
\end{lemma}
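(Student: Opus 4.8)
The plan is to prove the two inclusions $N_L\subseteq\vec N(X_{\sigma_L},S)$ and $\vec N(X_{\sigma_L},S)\subseteq N_L$ separately; for ${\bm n}\neq{\bm 0}$ write $v_L({\bm n})$ for its $L$-adic valuation, as in \eqref{eq:CommutationTauM}. Both inclusions start from the fact that, by \cref{ParticularCaseForFibersOdometer}, $\pi\colon X_{\sigma_L}\to\overleftarrow{\Z^d}_{(L^n)}$ is the maximal equicontinuous factor, so \cref{MaximalEquicontinuousFactorCompatibleWithHomomorphisms} and \cref{CompatibilityPropertyFactors} produce, for every $\phi\in N_M(X_{\sigma_L},S)$, an induced $\GL$-endomorphism $\hat\pi(\phi)\in N_M(\overleftarrow{\Z^d}_{(L^n)})$; hence $M$ lies in the group $\vec N(\overleftarrow{\Z^d}_{(L^n)})$ (\cref{cor:CorollariesNormalizerConditionOdometer1}), i.e.\ satisfies \eqref{normalizercondition3}. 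But $N_L$ is in general a \emph{proper} subgroup of $\vec N(\overleftarrow{\Z^d}_{(L^n)})$ — for instance with $L=\left(\begin{smallmatrix}2&1\\0&2\end{smallmatrix}\right)$ one has $\vec N(\overleftarrow{\Z^2}_{(L^n)})=\GLtwo$ by the case $\rad(\det L)\mid\trace L$ of \cref{GeneralTheoremDifferentCasesNormalizer}, whereas $\left(\begin{smallmatrix}0&1\\1&0\end{smallmatrix}\right)\notin N_L$ — so the substantive direction $\vec N(X_{\sigma_L},S)\subseteq N_L$ requires \emph{upgrading} the odometer condition using that a $\GL$-endomorphism of a subshift is a \emph{finite}-radius sliding block code (\cref{thm:CHLEpimorphism}) together with the recognizability of $\sigma_L$ (\cref{lem:reconagizability}).

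For that inclusion, fix $\phi\in N_M(X_{\sigma_L},S)$ of radius $r$. By \cref{cor:FixedPoint}, $\phi$ maps each $\sigma_L$-fixed point $\overline x^{(a)}$ (the one with letter $a\in\A$ at ${\bm 0}$) to a shift of a $\sigma_L$-fixed point; since $\phi$ has finite radius, $\phi(\overline x^{(a)})$ and $\phi(\overline x^{(b)})$ agree off the finite set $M(B({\bm 0},r))$, and two shifts of fixed points that agree off a finite set are shifted by the same vector (by recognizability they have the same $\pi$-image, forcing equality of the shifts). Replacing $\phi$ by $S^{-{\bm t}}\circ\phi$ — changing neither $M$ nor the property of being an $M$-endomorphism — we may assume $\phi(\overline x^{(a)})=\overline x^{(\rho(a))}$ for a permutation $\rho$ of $\A$ and $\hat\pi(\phi)=\phi^M$. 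Writing $\phi$ through a $B({\bm 0},r)$-block map $\Phi$ and evaluating $\overline x^{(\rho(a))}=\phi(\overline x^{(a)})$ at ${\bm n}$, the fixed-point formula \eqref{eq:Fixedpoint} gives $\tau(M{\bm m})=\Phi\bigl(\overline x^{(a)}|_{{\bm m}+B({\bm 0},r)}\bigr)$ for ${\bm m}=M^{-1}{\bm n}\neq{\bm 0}$. Now $\overline x^{(a)}|_{{\bm m}+B({\bm 0},r)}$ depends only on ${\bm m}$ modulo a fixed power $L^{q}(\Z^d)$ (\cref{lem:reconagizability}), while for $v_L({\bm m})$ large it reduces to the ``origin-shaped'' pattern with central letter $\tau({\bm m})$, on which $\Phi$ returns $\rho(\tau({\bm m}))$. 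Thus $\tau(M{\bm m})=\rho(\tau({\bm m}))$ for all highly $L$-divisible ${\bm m}$, and $\tau(M{\bm m})$ depends only on ${\bm m}\bmod L^{q}(\Z^d)$ for ${\bm m}$ away from ${\bm 0}$. Combining these two constraints with \eqref{normalizercondition3} for $M$ and for $M^{-1}$ forces $M$ to preserve the $L$-adic valuation of deep vectors, i.e.\ $M(L^n(\Z^d))=L^n(\Z^d)$ — equivalently $L^{-n}ML^n\in\GL$ — for all $n$ beyond some $n_0$; and $\tau(M{\bm m})=\rho(\tau({\bm m}))$ then says that $L^{-n}ML^n\bmod L(\Z^d)$ equals $\rho$ for all $n\ge n_0$, hence is eventually constant. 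That is precisely $M\in N_L$. I expect this last step — turning the two combinatorial constraints into the arithmetic statement $M\in N_L$, and in particular ruling out a valuation shift $v_L(ML^p{\bm f})=p+c({\bm f})$ with $c\neq 0$ — to be the main obstacle: it is exactly where the argument must use the marked orbit $\mathcal O(\overleftarrow 0,{\bm +})$ of $X_{\sigma_L}$ and cannot be reduced to the (strictly coarser) normalizer of the odometer, and carrying it out means tracking simultaneously the block-code radius, the recognizability constant of \cref{lem:reconagizability}, and the function $m(\cdot)$ of \eqref{normalizercondition3}.

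For the converse inclusion $N_L\subseteq\vec N(X_{\sigma_L},S)$, given $M\in N_L$ we build an $M$-endomorphism by a block code. Pick $n_0$ so large that for $n\ge n_0$ the matrix $M_n=L^{-n}ML^n$ lies in $\GL$ and preserves $L(\Z^d)$ with one and the same induced permutation $\overline M$ of $\A$ — these are the ``crucial properties'' recorded before \eqref{eq:CommutationTauM}, and they force $M$ to preserve $L$-adic valuations of vectors in $L^{n_0}(\Z^d)$ and make \eqref{eq:CommutationTauM} hold. Choose $r$ with $B({\bm 0},r)\supseteq F_{q}$, where $q$ is large enough (using \eqref{normalizercondition3} for $M$) that from a fixed-point window $\overline x|_{{\bm m}+B({\bm 0},r)}$ one recovers ${\bm m}\bmod L^{q}(\Z^d)$, hence $\tau(M{\bm m})$ whenever $v_L({\bm m})<q$; for $v_L({\bm m})\ge q$ the window is origin-shaped with central letter $\tau({\bm m})$ and one declares the output $\overline M(\tau({\bm m}))$. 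These prescriptions define a $B({\bm 0},r)$-block map $\Phi$ on $\mathcal L_{B({\bm 0},r)}(X_{\sigma_L})$, and by \eqref{eq:CommutationTauM} the induced $\phi(x)_{\bm n}=\Phi\bigl(x|_{M^{-1}{\bm n}+B({\bm 0},r)}\bigr)$ sends a $\sigma_L$-fixed point $\overline x$ to a $\sigma_L$-fixed point; since $X_{\sigma_L}=\overline{\mathcal O(\overline x,S)}$ is minimal, $\phi(X_{\sigma_L})\subseteq\overline{\mathcal O(\phi(\overline x),S)}=X_{\sigma_L}$, so $\phi$ is a block code $X_{\sigma_L}\to X_{\sigma_L}$ with linear part $M$, i.e.\ an $M$-endomorphism by \cref{thm:CHLEpimorphism}, whence $M\in\vec N(X_{\sigma_L},S)$.

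Finally, $N_L$ is visibly closed under products and inverses: $L^{-n}(MM')L^n=(L^{-n}ML^n)(L^{-n}M'L^n)$ and $L^{-n}M^{-1}L^n=(L^{-n}ML^n)^{-1}$ are in $\GL$ for $n$ large, and their reductions mod $L(\Z^d)$ are eventually constant. Hence $\vec N(X_{\sigma_L},S)=N_L$ is a group, as asserted, which also reconciles the statement with the coalescence of $X_{\sigma_L}$ (\cref{cor:CorollariesNormalizerConditionOdometer1}-type reasoning via \cref{CompatibilityPropertyFactors}).
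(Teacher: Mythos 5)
Your overall architecture matches the paper's: reduce to the odometer via $\hat\pi$ only to get started, then use the finite radius of the sliding block code together with the fixed-point formula \eqref{eq:Fixedpoint} to upgrade the information, and prove the converse inclusion $N_L\subseteq\vec N(X_{\sigma_L},S)$ by an explicit local map built from the truncated $L$-adic valuation, sending a $\sigma_L$-fixed point to a $\sigma_L$-fixed point and concluding by minimality. That second half is essentially the paper's construction of $\phi_M$ and is fine (modulo the small imprecision that ``the window depends only on $\bm m$ modulo $L^{q}(\Z^d)$'' is not literally \cref{lem:reconagizability} and fails at positions of the window lying in $L^{q}(\Z^d)$; this is repairable, and the paper's formula $\phi_M(x)_{\bm n}=M_{v(\cdot)}x_{M^{-1}\bm n}\ (\text{mod } L(\Z^d))$ sidesteps it). Your observation that $N_L$ is in general strictly smaller than $\vec N(\overleftarrow{\Z^{d}}_{(L^{n})})$, so that the odometer alone cannot give the forward inclusion, is also correct and well illustrated.

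However, the forward inclusion $\vec N(X_{\sigma_L},S)\subseteq N_L$ is not actually proved. After deriving the two combinatorial constraints ($\tau(M\bm m)=\rho(\tau(\bm m))$ for deep $\bm m$, and congruence-dependence of $\tau(M\bm m)$ for shallow $\bm m$), you assert that together with \eqref{normalizercondition3} for $M$ and $M^{-1}$ they ``force'' $M(L^{n}(\Z^{d}))=L^{n}(\Z^{d})$ for all large $n$, and you yourself flag ruling out a valuation shift as ``the main obstacle'' without offering an argument. That step is exactly the technical core of the paper's proof, and it is not a routine consequence of your two constraints: the paper proves it by contradiction, setting $m(n)=\min\{m: L^{-n}M^{-1}L^{m}\in\mathcal M(d,\Z)\}$ and assuming $m(j)>j$ for infinitely many $j$; it then produces $\bm h_1\notin L(\Z^{d})$ with $L^{m(j)}(\bm h_1)\in ML^{j+1}(\Z^{d})$ (using that $L^{-m(j)-1}ML^{j+1}$ cannot be integral because its determinant is not an integer), and two positions $\bm g_1=L^{m(j)}(\bar{\bm g})$, $\bm g_2=\bm g_1+ML^{j+1}(\bm h_2)$ whose $M^{-1}$-preimages have identical windows over $L^{n+1}(K_{\sigma_L})+F_n\supseteq B(\bm 0,r(\phi))$, so the block code forces $\phi(\bar x)_{\bm g_1}=\phi(\bar x)_{\bm g_2}$, while $\tau(\bm g_1)\neq\tau(\bm g_2)$ since $\bm h_1\notin L(\Z^{d})$ — contradicting \eqref{eq:Fixedpoint} applied to the fixed point $\phi(\bar x)$. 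Nothing in your proposal substitutes for this construction (note in particular that the careful choice of $\bar{\bm g}$ with $L^{-j}M^{-1}L^{m(j)}(\bar{\bm g})\not\equiv 0\ (\text{mod } L(\Z^d))$ is what makes the two windows agree while the images' digits differ). Until that step is supplied, the inclusion $\vec N(X_{\sigma_L},S)\subseteq N_L$ — and hence the lemma — remains open in your write-up; the subsequent eventual constancy of $L^{-p}ML^{p}\ (\text{mod } L(\Z^{d}))$, which you obtain from $\tau(ML^{p}\bm f)=\rho(\bm f)$ and the paper obtains from the repetition \eqref{eq:repetition}, is comparatively easy once valuation preservation is known.
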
	
\begin{proof} We start by showing that $\vec{N}(X_{\sigma_{L}},S) \leqslant N_{L}$. Set $M\in \vec{N}(X_{\sigma_{L}},S)$, and let $\phi\in N(X_{\sigma_{L}}, S)$ be an $M$-endomorphism  with radius $r(\phi)$. Up to compose  $\phi$  with a shift, we can assume that it preserves the set of  $\sigma_{L}$-fixed points (\cref{cor:FixedPoint}). 

Since $\pi$ is compatible with $\GL$-endomorphisms (\cref{MaximalEquicontinuousFactorCompatibleWithHomomorphisms}), \cref{CompatibilityPropertyFactors} provides that $M\in \vec{N}(\overleftarrow{\Z^{d}}_{(L^{n})})$. This set is a group (\cref{cor:CorollariesNormalizerConditionOdometer1}), so $M^{-1}$ also belongs to  $\vec{N}(\overleftarrow{\Z^{d}}_{(L^{n})})$, i.e., for any $n>0$, there exists $m>0$ such that $L^{-n}M^{-1}L^{m}$ is an endomorphism of $\Z^{d}$ (see \cref{sec:HomoZ2Odometers}). We define $m(n)=\min\limits\{m>0: L^{-n}M^{-1}L^{m} \text{\ is an endomorphism of}\ \Z^{d}\}$.
Since the determinant of a $\Z^d$-endomorphism is an integer, we have that $m(n)\geq n$. 

We will  show that actually $m(n)=n $ for any large enough $n$, so that $M^{-1}$ belongs to $ \bigcup_{k> 0}\bigcap_{n \geq k} L^n \GL L^{-n}$. Since this set is stable by taking the inverse, this is also true for $M$. 
 
We prove the claim by contradiction, i.e., we assume there is an infinite set of integers $j$  such that $m(j)>j$. Choose $n$ large enough integer so that the ball of radius $r(\phi)$  centered at the origin is included in $L^{n}(K_{\sigma_{L}}) + F_n$, see  \cref{FiniteSubsetFillsZd}. For such $n$, $L^j(\Z^d) \cap (L^{n+1}(K_{\sigma_{L}}) + F_n)= \{\bm 0\}$, for any $j$ large enough. Since the sequence $(m(j))_{j\in S}$ goes to infinity, one can moreover assume that $m(j+1)>m(j)$.
With this convention, the  group $L^{-j}M^{-1}L^{m(j)}(\Z^d)$ can not be a subset of  $L(\Z^d)$, since otherwise this implies $m(j) \ge m(j+1)$. So there is some $\bar{\bm g} \in \Z^d$ such that  $L^{-j}M^{-1}L^{m(j)}(\bar{\bm g} ) \neq 0  \ (\text{mod } L(\Z^d))$. 
Moreover, the set $L^{-m(j)-1}ML^{j+1}(\Z^d) \setminus \Z^d $ is not empty, since otherwise the matrix $L^{-m(j)-1}ML^{j+1}$ would have integer coefficients, which is impossible since its determinant $ \det L^{j-m(j)}$ is not an integer. This provides an element $\bm h_0\in L^{-m(j)-1}ML^{j+1}(\Z^d) \setminus \Z^d$. Set $\bm h_1= L(\bm h_0 )$, we have then $\bm h_1 \not\in L(\Z^d)$ and $L^{m(j)} (\bm h_1) = M L^{j+1}(\bm h_2)$ for some element $\bm h_2 \in \Z^d$.   
These elements will enable us to provide a contradiction. 

Set $ \bm g_1 = L^{m(j)}(\bar{\bm g} )$ and  $\bm g_2 = \bm g_1 + ML^{j+1}\bm h_2 $. By construction, such elements satisfy
\begin{align*}
M^{-1} \bm g_1= L^j (L^{-j}M^{-1} L^{m(j)}(\bar{\bm g} )) = M^{-1} \bm g_2  \ (\text{mod } L^{j}(\Z^d)),
\end{align*}
so that 
\begin{align*}
\tau(M^{-1} \bm g_1) = \tau(M^{-1} \bm g_2) =  L^{-j}M^{-1} L^{m(j)} \bar{\bm g} \ (\text{mod } L(\Z^d)).
\end{align*}
Moreover the same relation and the very choice of $j$ give for any $\bm k \in K_{\sigma_L}$, $\bm f \in F_n$
\begin{align}\label{eq:g1g2}
\tau(M^{-1} \bm g_1 + L^{n+1}(\bm k) + \bm f) = \tau(M^{-1} \bm g_2 + L^{n+1}(\bm k) + \bm f).
\end{align}

On the other hand, the very choice of $\bm h_2$ implies that
$$\tau(\bm g_2) = \tau (\bar{\bm g} + \bm h_1) \qquad \text{ whereas  }  \tau(\bm g_1) = \tau (\bar{\bm g}). $$
Since $\bm h_1 \not\in L(\Z^d)$, we get 
\begin{align}\label{eq:taug1g2}
 \tau(\bm g_1) \neq \tau (\bm g_2). 
\end{align}

Consider $\overline{x}\in X_{\sigma_{L}}$ a fixed point of $\sigma_{L}$.  The relation \eqref{eq:g1g2} implies that
 $$ \bar{x}_{|M^{-1}\bm g_1 + L^{n+1}(K_{\sigma_{L}}) + F_n } = \bar{x}_{|M^{-1}\bm g_2 + L^{n+1}(K) + F_n }.$$
By the choice of $n\in\NN$ and the Curtis-Hedlund-Lyndon theorem (\cref{thm:CHLEpimorphism}), we also have that $\phi(\bar{x})_{\bm g_1} =  \phi(\bar{x})_{\bm g_2}$. 
Since $\phi$ preserves the set of $\sigma_L$-fixed points,  
we get  by \eqref{eq:Fixedpoint}, $\phi(\bar{x})_{\bm g_1}= \tau (\bm g_1)$ and  $\phi(\bar{x})_{\bm g_2}= \tau (\bm g_2)$, contradicting \eqref{eq:taug1g2}. So  $m(n)= n$ for any large enough $n$.

We still have to show that $L^{-p}ML^{p} \ (\text{mod } L(\Z^{d}))$ is uniform in $p$ for any large enough integer $p$.    
Let $K_{\sigma_{L}}$ be the finite set provided by \cref{FiniteSubsetFillsZd} and let $n$ be large enough so that  $ L^n(K_{\sigma_{L}}) + F_n$ contains the ball  $B_{r(\phi)}({\bm 0})$.
Consider $\bm f \in F_1\setminus\{0\}$ and integers $p, q > n$. We claim that  
\begin{align}\label{eq:repetition} \bar{x}_{|L^p({\bm f}) + L^n(K_{\sigma_{L}}) + F_n } =\bar{x}_{|L^q({\bm f}) + L^n(K_{\sigma_{L}}) + F_n }.
\end{align}
Indeed, by the equality \eqref{eq:Fixedpoint} and since $p>n$  we get the following for any $\bm k \in K_{\sigma_{L}}$, $\bm f_n \in F_n$
\begin{align*}
 \bar{x}_{L^p({\bm f}) + L^n(k) + \bm f_n} =   \begin{cases} 
 \tau (\bm f_n) & \text{ if }   \bm f_n \in  F_n\setminus\{0\} \\
 \tau(\bm k) &\text{ if }   \bm k \neq \bm 0 \wedge {\bm f}_{n}={\bm 0}\\
 \tau ({\bm f}) &\text{ otherwise}.
\end{cases} 
\end{align*}
In particular, notice that $\bar{x}_{|L^p({\bm f}) + L^n(K_{\sigma_{L}}) + F_n }$ is independent of $p$ and so the equality \eqref{eq:repetition} follows. 

Moreover, the equality \eqref{eq:repetition}  implies, by Curtis-Hedlund-Lyndon theorem (\cref{thm:CHLEpimorphism}), that  $\phi(\bar{x})_{ML^p{\bm f}} =  \phi(\bar{x})_{ML^q{\bm f}}$. Recall that  $M_{k}$ denotes $L^{-k} M L^{k}$ for any  integer $k \ge 0$. Since  $\phi(\bar{x})$ is also fixed by $\sigma_L$, the same computation as before provides $\phi(\bar{x})_{ML^p{\bm f}} = \tau (ML^p {{\bm f}}) = M_p \bm f \ (\text{mod } L(\Z^d))$, by \eqref{eq:CommutationTauM}. Similarly, we also have that $\phi(\bar{x})_{ML^q{\bm f}} = M_q \bm f \ (\text{mod } L(\Z^d))$. Hence  $M_p \bm f = M_q \bm f (\text{mod } L(\Z^d))$ for any $p,q>n$ and $\bm f \in F_1\setminus\{0\}$. 
It follows that $M$ is in $N_{L}$.

\medskip 

We will show now the converse inclusion, that is  $N_{L} \leqslant \vec{N}(X_{\sigma_{L}},S)$, so that the two sets are actually equal. 
 
Recall that for a matrix $M \in N_{L}$, all the matrices $M_p = L^{-p}ML^p$   permute $L(\Z^d)$-cosets, so they define an isomorphism on $F_1\setminus \{ \bm 0\} $. Moreover, this isomorphism is independent of $p$, for any $p$  greater than some $n_0\in\NN$. 
The recognizability property of $\sigma_L$ enables us to define the truncation of a ``$L$-adic" valuation as a local map. More precisely, from \cref{lem:reconagizability}, we can define the local map $v \colon {\mathcal L}_{F_{n_0}}(X_{\sigma_L}) \to \{0, 1, \ldots, n_0\}$ by 
\begin{align*}
    v(\bar{x}_{|\bm n + F_{n_0} }) =
    \begin{cases}
    q &\text{ if } \bm n \in L^{q}(\Z^d) \setminus L^{q+1}(\Z^d) \text{ with } 1 \le q \le n_0 \\ 
    n_0 & \text{ otherwise}.
    \end{cases}
\end{align*}

We then set  the map $\phi_{M}:X_{\sigma_{L}}\to\phi(X_{\sigma_{L}})$ induced by the local map 
\begin{equation}\label{eq:equationforphiM}
    \phi_{M}(x)_{{\bm n}}= M_{v(x_{|\bm M^{-1} \bm n + F_{n_0} })}  x_{M^{-1}{\bm n}} \ (\text{mod } L(\Z^d)),\quad   \text{for any } x\in X_{\sigma_{L}}, {\bm n}\in \Z^{d}.
\end{equation}

Notice that $\phi_M$ is an $M$-epimorphism onto the  subshift $\phi_M(X_{\sigma_L})$ by  Curtis-Hedlund-Lyndon theorem (\cref{thm:CHLEpimorphism}). 
Actually, the two subshifts are the same $\phi_M(X_{\sigma_L}) =X_{\sigma_{L}}$ so that $\phi_M$ is an $M$-endomorphism.   
To prove it, it is enough to show that $\phi_M$ maps a $\sigma_{L}$-fixed point $\overline{x}\in X_{\sigma_{L}}$ to another fixed point of $\sigma_L$ within $X_{\sigma_L}$, so that 
$\phi_M(X_{\sigma_L}) \cap X_{\sigma_{L}} \neq \emptyset$. The minimality of the subshift  $X_{\sigma_{L}}$ enables us to conclude that $\phi_M$ is an $M$-endomorphism. 

Indeed, Equation \eqref{eq:Fixedpoint} provides for any ${\bm n} \neq {\bm 0}  \in \Z^d$ that
\begin{align*}
  \phi_M(\bar x)_{M\bm n} &= M_{v(\bar x_{|\bm n + F_{n_0} })}  \bar x_{{\bm n}} \ (\text{mod } L(\Z^d)) \\
   &= M_{v(\bar x_{|\bm n + F_{n_0} })} \tau(\bm n)   \ (\text{mod } L(\Z^d)) \\
   &= \tau (M \bm n) \hfill \text{ by relation } \eqref{eq:CommutationTauM}. \\
\end{align*}
So $\phi_M(\bar x)$ is fixed by $\sigma_L$, and the claim follows, i.e., $\phi_M$ is an $M$-endomorphism of $X_{\sigma_L}$. Hence $\vec{N}(X_{\sigma_{L}},S)=N_{L}$, and it is a group. \cref{CoalescenceOfHommorphismsForCoalescentSystems} ensures then that $N(X_{\sigma_L}, S)$ is a group.  
\end{proof}

\begin{lemma}\label{Lem:semidirectProduct}
Assume the sequence of supports of the iterations $\sigma_{L}^{n}$ is  F\o lner, then the normalizer semigroup $N(X_{\sigma_{L}},S)$ is isomorphic to a semidirect product between $\Z^{d}$ and the linear group $N_{L}$ \eqref{eq:NL}. 
\end{lemma}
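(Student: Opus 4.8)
The plan is to read off the statement from the split exact sequence \eqref{ExactSequenceForNormalizer}, using the work already done in \cref{Lem:LinearRepresentation} and \cref{LemmaNormalizerToeplitzSubstitution}. First I would record that, since the sequence of supports of the iterates $\sigma_L^n$ is F\o lner, $\sigma_L$ is aperiodic (\cref{lem:reconagizability}), so \eqref{ExactSequenceForNormalizer} applies to $(X_{\sigma_L},S,\Z^d)$. By \cref{LemmaNormalizerToeplitzSubstitution} the system is coalescent and $\Aut(X_{\sigma_L},S)=\langle S\rangle$, which is isomorphic to $\Z^d$ because the action is aperiodic; by \cref{Lem:LinearRepresentation} the linear representation semigroup $\vec N(X_{\sigma_L},S)=N_L$ is a group, so \cref{CoalescenceOfHommorphismsForCoalescentSystems} gives $N(X_{\sigma_L},S)=N^*(X_{\sigma_L},S)$ and $\vec{N^*}(X_{\sigma_L},S)=N_L$. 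Thus \eqref{ExactSequenceForNormalizer} becomes
\[
1\longrightarrow \Z^d\longrightarrow N(X_{\sigma_L},S)\xrightarrow{\ j\ } N_L\longrightarrow 1,
\]
and the whole task reduces to exhibiting a group-homomorphism section of $j$.

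For the section I would use the explicit $M$-endomorphisms constructed in the proof of \cref{Lem:LinearRepresentation}: to each $M\in N_L$ that proof associates an $M$-endomorphism $\phi_M$ of $X_{\sigma_L}$ which sends the finite nonempty set $\mathcal F$ of $\sigma_L$-fixed points into itself. The key point is that $\phi_M$ is the \emph{unique} $M$-endomorphism with this property. Indeed, $j^{-1}(M)$ is the coset $\{S^{\bm m}\circ\phi_M:\bm m\in\Z^d\}$ of $\ker j=\langle S\rangle$; if $S^{\bm m}\circ\phi_M$ also maps some $\overline x\in\mathcal F$ into $\mathcal F$, then $S^{\bm m}$ sends the $\sigma_L$-fixed point $\phi_M(\overline x)$ onto a $\sigma_L$-fixed point, so by \eqref{eq:Fixedpoint} one gets $\tau(\bm n)=\tau(\bm n+\bm m)$ for all $\bm n\notin\{\bm 0,-\bm m\}$. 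Evaluating this at $\bm n=L^{p+1}(\bm f)$, where $p$ is the (finite, since $L^{-1}$ is a contraction) $L$-adic valuation of $\bm m$ and $\bm f$ ranges over $F_1\setminus\{\bm 0\}$, makes the left-hand side equal to $\bm f$ while the right-hand side is the constant $\tau(\bm m)$; hence $|F_1\setminus\{\bm 0\}|=|\det L|-1\le 1$, contradicting the hypothesis $|\det L|\ge 3$. (This is the only place where $|\det L|\ge 3$, rather than merely $\ge 2$, is needed.) So $\bm m=\bm 0$, which proves the uniqueness.

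With uniqueness established I would set $s(M)=\phi_M$; then $j\circ s=\mathrm{id}_{N_L}$ by construction, and for $M,M'\in N_L$ the composite $\phi_M\circ\phi_{M'}$ is an $MM'$-endomorphism sending $\mathcal F$ into $\mathcal F$ (because $\phi_{M'}(\mathcal F)\subseteq\mathcal F$ and $\phi_M(\mathcal F)\subseteq\mathcal F$), hence equals $\phi_{MM'}$ by uniqueness; thus $s$ is a group homomorphism. Consequently the exact sequence splits and $N(X_{\sigma_L},S)\cong\Z^d\rtimes N_L$. Finally, since $\phi_M$ is an $M$-endomorphism, $\phi_M\circ S^{\bm n}\circ\phi_M^{-1}=S^{M\bm n}$ for every $\bm n\in\Z^d$, so the semidirect action of $N_L$ on $\Z^d$ is the tautological linear one, and the isomorphisms $\phi_M\circ S^{\bm n}$ are explicit.

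The only genuinely delicate step is the uniqueness ("no spurious shift") argument; everything else is bookkeeping with the exact sequence together with facts already in hand. I do not anticipate a real obstacle, because the substantive content — the construction of $\phi_M$, the identification $\vec N(X_{\sigma_L},S)=N_L$, coalescence, and triviality of $\Aut(X_{\sigma_L},S)$ — is already contained in \cref{Lem:LinearRepresentation} and \cref{LemmaNormalizerToeplitzSubstitution}.
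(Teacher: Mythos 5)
Your proof is correct, and its skeleton coincides with the paper's: both reduce the lemma to splitting the exact sequence \eqref{ExactSequenceForNormalizer} after invoking \cref{LemmaNormalizerToeplitzSubstitution}, \cref{Lem:LinearRepresentation} and \cref{CoalescenceOfHommorphismsForCoalescentSystems} to get $\Aut(X_{\sigma_L},S)=\langle S\rangle\cong\Z^d$, $\vec{N}(X_{\sigma_L},S)=N_L$ and $N=N^{*}$, and both take the explicit maps $\phi_M$ from the proof of \cref{Lem:LinearRepresentation} as the candidate section. The genuine difference lies in how multiplicativity $\phi_{MM'}=\phi_M\circ\phi_{M'}$ is verified. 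The paper notes that the two maps have the same linear part, so their agreement set is closed and $S$-invariant, and by minimality it suffices to check agreement at one $\sigma_L$-fixed point; since both images are fixed points, which differ only at the origin, this reduces to the coordinate identity $(MM')_{\max(n_0,n'_0)}=M_{n_0}M'_{n'_0}\ (\text{mod } L(\Z^d))$. You instead characterize $\phi_M$ as the unique element of the fiber $j^{-1}(M)=\{S^{\bm m}\circ\phi_M\colon \bm m\in\Z^d\}$ preserving the set $\mathcal F$ of fixed points, which rests on the rigidity fact that $S^{\bm m}$ cannot send a $\sigma_L$-fixed point to a $\sigma_L$-fixed point unless $\bm m=\bm 0$; your $\tau$-computation for this is correct (the left side $\tau(L^{p+1}\bm f)=\bm f$ runs over all of $F_1\setminus\{\bm 0\}$ while the right side is the constant $\tau(\bm m)$, and $L^{p+1}\bm f\neq -\bm m$ by the choice of $p$). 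What your route buys is a purely formal deduction of the homomorphism property, with no minimality argument and no comparison of the permutations $M_{n_0}$, $M'_{n'_0}$; what it costs is the extra rigidity lemma. Note that this lemma also follows in one line from the factor map $\pi$ of \cref{ParticularCaseForFibersOdometer}: every fixed point lies in $\pi^{-1}(\{\overleftarrow{0}\})$, so $S^{\bm m}(\mathcal F)\cap\mathcal F\neq\emptyset$ forces $\kappa_{(L^{n})}(\bm m)=\overleftarrow{0}$, hence $\bm m\in\bigcap_{n}L^{n}(\Z^d)=\{\bm 0\}$; this version does not even use $|\det L|\geq 3$ at that step (that hypothesis is in any case standing, since for $|\det L|=2$ the alphabet is a singleton and aperiodicity, needed for \eqref{ExactSequenceForNormalizer} and for $\langle S\rangle\cong\Z^d$, already fails).
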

\begin{proof}
From preceding  \cref{Lem:LinearRepresentation} and \cref{LemmaNormalizerToeplitzSubstitution},
\cref{CoalescenceOfHommorphismsForCoalescentSystems} ensures then that $N(X_{\sigma_L}, S, \Z^{d})$ is a group. 
We have to prove that the map 
$$ M\in N_{L} \mapsto \phi_M \in \vec{N}(X_{\sigma_L}, S)$$ is a group embedding. This will show that the exact sequence  \eqref{ExactSequenceForNormalizer} splits and ${N}(X_{\sigma_L}, S, \Z^{d})$ is a semidirect product between $\Z^d$ and the linear group $N_{L}$.
To prove it is a group morphism, the only nontrivial point to check is the composition relation $ \phi_{MM'} = \phi_{M} \circ \phi_{M'} $ for any $M, M' \in N_{L}$. 
Since the maps $\phi_{MM'}$ and $\phi_{M} \circ \phi_{M'}$ have the same linear part, the closed set $\{x \in X_{\sigma_L} : \phi_{MM'}(x)  = \phi_{M} \circ \phi_{M'}(x) \} $ is $S$-invariant. By minimality, we only have to prove it is nonempty. We will show it contains any $\sigma_L$-fixed point $\bar{x}$. 
 Since in the previous part, we have shown that the fixed points are preserved under the maps $\phi_M$, we only have to check that the images under the two maps have the same $\bm  0$ coordinate.

Let $n_0 $ be the integer associated with $M$  such that the transformation $M_p$ coincides mod $L^p(\Z^d)$ for $p\ge n_0$. Define $n'_0$ similarly for $M'$. It is direct to check that the integer $\max(n_0, n'_0)$ plays a similar role for $MM'$. 
By definition we have  $\phi_{M'}(\bar x)_{\bm 0} = M'_{n'_0} \bar{x}_{\bm 0} \ (\text{mod } L(\Z^d))$ and $\phi_{M} \circ  \phi_{M'}(\bar x)_{\bm 0} = M_{n_0}M'_{n'_0} \bar{x}_{\bm 0} \ (\text{mod } L(\Z^d))$. Now, a direct computation gives that $(MM')_{\max(n_0, n'_0)} = M_{n_0}M'_{n'_0}  \ (\text{mod } L(\Z^d))$ and this shows the claim, i.e., the two images have the same ${\bm 0}$ coordinate. 

To prove that the morphism is injective, consider a matrix $M$ in its kernel, i.e., such that $\phi_M = \Id_{\R^{d}}$. Composing this relation with  the shift map $S^{\bm z} $, with $\bm z \in \Z^d$, and since $\phi_M$ is an $M$-endomorphism, we get that $S^{M\bm z} = S^{\bm z}$ for any  $\bm z \in \Z^d$. By aperiodicity of the subshift, $M$ has to be the identity matrix.
\end{proof}
\cref{prop:MainresultSection4} resumes all the former results. For completeness we provide a proof.
\begin{proof}[Proof of \cref{prop:MainresultSection4}]
 Items \eqref{it:1} and  \eqref{it:3}  are given by \cref{LemmaNormalizerToeplitzSubstitution}. Proposition \ref{ParticularCaseForFibersOdometer} proves Item  \eqref{it:2}. Finally  Item \eqref{it:4}  and Item \eqref{it:5} are established by \cref{Lem:LinearRepresentation} and \cref{Lem:semidirectProduct},  respectively. Moreover, the isomorphisms are explicit, precisely, we have 
 \begin{align*}
     N(X_{\sigma_{L}},S)=\{S^{{\bm n}}\phi_{M}: {\bm n}\in \Z^{d}, M\in N_{L}\},
 \end{align*}
 \noindent where $\phi_{M}$ is given by \eqref{eq:equationforphiM}.
\end{proof}

In the  particular case where  the expansion matrix $L$ is a multiple of the identity, \cref{prop:MainresultSection4} provides $\vec{N}(X_{\sigma_{L}},S)=\Cent_{\GL}(L)=\GL$. 
As a consequence, we get the following direct corollary:

\begin{corollary}
	The normalizer semigroup of the half-hex substitution $N(X_{hh},S)$ is a group and it is isomorphic to a semidirect product between $\Z^{2}$ and $\GLtwo$. Moreover, its automorphism group $\Aut(X_{hh},S)$ is trivial.
\end{corollary}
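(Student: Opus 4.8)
The plan is to derive the corollary as a direct specialization of the structural results of this section to the scalar expansion matrix $L = L_{hh} = 2\cdot\id_{\Z^2}$.

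First I would record that the half-hex substitution $\zeta_{hh}$ is, up to the letter-to-letter relabeling $\{0,1,2\}\leftrightarrow F_1^{hh}\setminus\{\bm 0\}$, exactly the substitution $\sigma_{L}$ of \eqref{SubStitutionToeplitzInfiniteSymmetries} associated to $L = L_{hh} = 2\cdot\id_{\Z^2}$ and the fundamental domain $F_1^{hh}$ of $2\Z^2$: comparing the two definitions, both send a letter $a$ to the pattern carrying $a$ at the origin and carrying $\bm f$ at each $\bm f\in F_1^{hh}\setminus\{\bm 0\}$. A relabeling of the alphabet is a topological conjugacy with trivial linear part, so it identifies $N(X_{hh},S)$ with $N(X_{\sigma_{L_{hh}}},S)$ as groups and $\vec{N}(X_{hh},S)$ with $\vec{N}(X_{\sigma_{L_{hh}}},S)$ as subgroups of $\GLtwo$. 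I would also recall that $(F_n^{hh})_{n\ge 0}$ has already been shown to be a F\o lner sequence (via Pick's formula, $F_n^{hh} = \conv(F_n^{hh})\cap\Z^2$), so the standing F\o lner hypothesis of \cref{LemmaNormalizerToeplitzSubstitution}, \cref{Lem:LinearRepresentation} and \cref{Lem:semidirectProduct} is met.

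Next I would apply these lemmas with $L = 2\cdot\id_{\Z^2}$. \cref{LemmaNormalizerToeplitzSubstitution} yields immediately that $(X_{hh},S,\Z^2)$ is coalescent and that $\Aut(X_{hh},S) = \langle S\rangle$, which is the last assertion of the corollary. \cref{Lem:LinearRepresentation} identifies $\vec{N}(X_{hh},S)$ with the group $N_{L}$, and here the point is the computation $N_{2\cdot\id_{\Z^2}} = \GLtwo$: since a scalar matrix is central in $\GLtwo$, one has $L^{n}ML^{-n} = M$ for all $M\in\GLtwo$ and all $n$, hence $\bigcup_{k\ge 0}\bigcap_{n\ge k} L^{n}\GLtwo L^{-n} = \GLtwo$ and the stabilization condition $L^{-n}ML^{n}\equiv L^{-p}ML^{p}\pmod{L(\Z^2)}$ is vacuously satisfied; equivalently $N_{L} = \Cent_{\GLtwo}(L) = \GLtwo$. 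Thus $\vec{N}(X_{hh},S) = \GLtwo$. Finally, \cref{Lem:semidirectProduct} states that $N(X_{hh},S)$ is a group isomorphic to a semidirect product of $\Z^2$ with $\vec{N}(X_{hh},S) = \GLtwo$; combining this with the automorphism statement completes the proof.

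I do not expect a genuine obstacle here: the corollary is designed to be a corollary. The only two points that require (routine) attention are the identification of $\zeta_{hh}$ with $\sigma_{L_{hh}}$ up to relabeling and the elementary verification $N_{2\cdot\id_{\Z^2}} = \GLtwo$, both of which rest on nothing beyond the centrality of scalar matrices and the already-established F\o lner property of $(F_n^{hh})_{n\ge 0}$.
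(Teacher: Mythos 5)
Your proposal is correct and follows essentially the same route as the paper: the corollary is obtained by specializing Theorem \ref{prop:MainresultSection4} (equivalently, Lemmas \ref{LemmaNormalizerToeplitzSubstitution}, \ref{Lem:LinearRepresentation} and \ref{Lem:semidirectProduct}) to $L=2\cdot\id_{\Z^{2}}$, where centrality of scalar matrices gives $N_{L}=\Cent_{\GLtwo}(L)=\GLtwo$. Your explicit remarks on the relabeling identifying $\zeta_{hh}$ with $\sigma_{L_{hh}}$ and on the already-verified F\o lner property of $(F_{n}^{hh})_{n\ge 0}$ are exactly the (implicit) ingredients the paper relies on.
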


This implies that the half-hex substitutive subshift is a minimal subshift with an infinite linear representation group. In fact, since $\vec{N}(X_{hh},S)=\GLtwo$, its linear representation group is the largest possible. 
As an other example, consider the matrix $L_{\theLvariable}=\begin{pmatrix}2 & 0\\ 0 & 4\end{pmatrix}$. By \cref{GeneralTheoremDifferentCasesNormalizer}, we have that $\vec{N}(\overleftarrow{\Z^{2}}_{(L_{\theLvariable}^{n})})=\GLtwo$, but \cref{prop:MainresultSection4} and a standard analysis provide that $\vec{N}(X_{\sigma_{L_{\theLvariable}}},S)$ is the set of matrices $\left\{\begin{pmatrix}a & 2b\\ 0 & d\end{pmatrix}\colon a,d\in \{-1,1\}, b\in \Z\right\}$. In particular, $\vec{N}(X_{\sigma_{L_{\theLvariable}}},S)$ is virtually $\Z$: its quotient by the group generated by the matrix  $\begin{pmatrix}1& 2\\ 0 & 1\end{pmatrix}$ is finite.

It is then natural to ask what is the collection of all the groups $\vec{N}(X_{\sigma_{L}},S)$ that appear for all the matrices $L$ and in particular if any subgroup of  $\GLtwo$ can be realized like this. This question can be very difficult because it requires precise control of the combinatorics which can be difficult to manage. A more manageable way could be by realizing linear representation groups of specific odometers (see Question  \ref{ques:realizationOdometer}). With this, we may expect to get an answer of the following:
\begin{question}
Does there exist for any odometer system $(\overleftarrow{\Z^{d}}_{(Z_{n})})$ an almost 1-to-1 Toeplitz extension $(X,S,\Z^{d})$ such that $\vec{N}(X,S)= \vec{N}(\overleftarrow{\Z^{d}}_{(Z_{n})})$?
\end{question}
Together with Question  \ref{ques:realizationOdometer} this will enable us to enrich that family of examples with a large linear representation group.

In the more restrictive class of subshifts given by the finite data of a constant-shape substitution, it is natural to ask whether the elements of the normalizer are computable. There is a body of evidence indicating that their automorphisms can be described by an algorithm. But, as illustrated by the characterization in Theorem \ref{prop:MainresultSection4}, nothing is clear concerning the elements of the linear representation group. Related to Question \ref{ques:LinearRepOdo}, we ask the following:  \begin{question}\label{ques:LinearRepSub}  Regarding the linear representation group for substitutive constant-shape subshifts, are its elements computable?  Is its group structure computable?
\end{question}
Here we mean  ``computable elements" in the sense that  there is an algorithm deciding  whether or not  a matrix $M$ belongs to the linear representation group. The second question is to find an algorithm for determining the linear representation group, up to isomorphism, as a function of the substitution.

\bibliographystyle{plain}
\bibliography{sample}

\begin{thebibliography}{10}

\bibitem{auslander1988minimal}
J.~Auslander.
\newblock {\em Minimal flows and their extensions}, volume 153 of {\em
  North-Holland Mathematics Studies}.
\newblock North-Holland Publishing Co., Amsterdam, 1988.
\newblock Notas de Matem\'{a}tica [Mathematical Notes], 122.

\bibitem{baake2021number}
M.~Baake, \'{A}. Bustos, C.~Huck, M.~Lema\'{n}czyk, and A.~Nickel.
\newblock Number-theoretic positive entropy shifts with small centralizer and
  large normalizer.
\newblock {\em Ergodic Theory Dynam. Systems}, 41(11):3201--3226, 2021.

\bibitem{baake2013aperiodic}
M.~Baake and U.~Grimm.
\newblock {\em Aperiodic order. {V}ol. 1}, volume 149 of {\em Encyclopedia of
  Mathematics and its Applications}.
\newblock Cambridge University Press, Cambridge, 2013.
\newblock A mathematical invitation, With a foreword by Roger Penrose.

\bibitem{baake2006structure}
M.~Baake and J.~A.~G. Roberts.
\newblock The structure of reversing symmetry groups.
\newblock {\em Bull. Austral. Math. Soc.}, 73(3):445--459, 2006.

\bibitem{baake2018reversing}
M.~Baake, J.~A.~G. Roberts, and R.~Yassawi.
\newblock Reversing and extended symmetries of shift spaces.
\newblock {\em Discrete Contin. Dyn. Syst.}, 38(2):835--866, 2018.

\bibitem{bezuglyi2008fullgroups}
S.~Bezuglyi and K.~Medynets.
\newblock Full groups, flip conjugacy, and orbit equivalence of {C}antor
  minimal systems.
\newblock {\em Colloq. Math.}, 110(2):409--429, 2008.

\bibitem{bustos2021admissible}
\'{A}. Bustos, D.~Luz, and N.~Ma\~{n}ibo.
\newblock Admissible reversing and extended symmetries for bijective
  substitutions.
\newblock {\em Discrete Comput. Geom.}, 69(3):800--833, 2023.

\bibitem{cabezas2023homomorphisms}
C.~Cabezas.
\newblock Homomorphisms between multidimensional constant-shape substitutions.
\newblock {\em Groups Geom. Dyn.}, 17(4):1259--1323, 2023.

\bibitem{cortez2006toeplitz}
M.~I. Cortez.
\newblock {${\Bbb Z}^d$} {T}oeplitz arrays.
\newblock {\em Discrete Contin. Dyn. Syst.}, 15(3):859--881, 2006.

\bibitem{cortez2008godometers}
M.~I. Cortez and S.~Petite.
\newblock {$G$}-odometers and their almost one-to-one extensions.
\newblock {\em J. Lond. Math. Soc. (2)}, 78(1):1--20, 2008.

\bibitem{CovenQuasYassawi}
E.~M. Coven, A.~Quas, and R.~Yassawi.
\newblock Computing automorphism groups of shifts using atypical equivalence
  classes.
\newblock {\em Discrete Anal.}, pages Paper No. 3, 28, 2016.

\bibitem{CyrKra2015}
V.~Cyr and B.~Kra.
\newblock The automorphism group of a shift of linear growth: beyond
  transitivity.
\newblock {\em Forum Math. Sigma}, 3:Paper No. e5, 27, 2015.

\bibitem{CyrKra2016}
V.~Cyr and B.~Kra.
\newblock The automorphism group of a minimal shift of stretched exponential
  growth.
\newblock {\em J. Mod. Dyn.}, 10:483--495, 2016.

\bibitem{CyrKra2020}
V.~Cyr and B.~Kra.
\newblock The automorphism group of a shift of slow growth is amenable.
\newblock {\em Ergodic Theory Dynam. Systems}, 40(7):1788--1804, 2020.

\bibitem{donoso2016lowcomplexity}
S.~Donoso, F.~Durand, A.~Maass, and S.~Petite.
\newblock On automorphism groups of low complexity subshifts.
\newblock {\em Ergodic Theory Dynam. Systems}, 36(1):64--95, 2016.

\bibitem{DonosoToeplitz2017}
S.~Donoso, F.~Durand, A.~Maass, and S.~Petite.
\newblock On automorphism groups of {T}oeplitz subshifts.
\newblock {\em Discrete Anal.}, pages Paper No. 11, 19, 2017.

\bibitem{downarowicz2005survey}
T.~Downarowicz.
\newblock Survey of odometers and {T}oeplitz flows.
\newblock In {\em Algebraic and topological dynamics}, volume 385 of {\em
  Contemp. Math.}, pages 7--37. Amer. Math. Soc., Providence, RI, 2005.

\bibitem{AbdalaouiLemanczyckdelaRue}
El~Houcein El~Abdalaoui, Mariusz Lema\'{n}czyk, and Thierry de~la Rue.
\newblock A dynamical point of view on the set of {${\mathcal B}$}-free
  integers.
\newblock {\em Int. Math. Res. Not. IMRN}, (16):7258--7286, 2015.

\bibitem{giordano2019zdodometer}
T.~Giordano, I.~F. Putnam, and C.~F. Skau.
\newblock {$\Bbb{Z}^d$}-odometers and cohomology.
\newblock {\em Groups Geom. Dyn.}, 13(3):909--938, 2019.

\bibitem{goodson1999inverse}
G.~R. Goodson.
\newblock Inverse conjugacies and reversing symmetry groups.
\newblock {\em Amer. Math. Monthly}, 106(1):19--26, 1999.

\bibitem{kirat2010remarksselfaffine}
I.~Kirat and I.~Kocyigit.
\newblock Remarks on self-affine fractals with polytope convex hulls.
\newblock {\em Fractals}, 18(4):483--498, 2010.

\bibitem{Kwapisz2011}
J.~Kwapisz.
\newblock Rigidity and mapping class group for abstract tiling spaces.
\newblock {\em Ergodic Theory Dynam. Systems}, 31(6):1745--1783, 2011.

\bibitem{Labbe2021}
S\'{e}bastien Labb\'{e}.
\newblock Rauzy induction of polygon partitions and toral
  {$\Bbb{Z}^2$}-rotations.
\newblock {\em J. Mod. Dyn.}, 17:481--528, 2021.

\bibitem{LabbeRao2021}
S\'{e}bastien Labb\'{e}.
\newblock Substitutive structure of {J}eandel-{R}ao aperiodic tilings.
\newblock {\em Discrete Comput. Geom.}, 65(3):800--855, 2021.

\bibitem{merenkov2022odometers}
S.~Merenkov and M.~Sabitova.
\newblock On {${\Bbb Z}^d$}-odometers associated to integer matrices.
\newblock {\em arXiv preprint arXiv:2209.09374}, 2022.

\bibitem{Penrose1979}
R.~Penrose.
\newblock Pentaplexity: a class of nonperiodic tilings of the plane.
\newblock {\em Math. Intelligencer}, 2(1):32--37, 1979/80.

\bibitem{Robinson1996}
E.~A. Robinson, Jr.
\newblock The dynamical theory of tilings and quasicrystallography.
\newblock In {\em Ergodic theory of {${\bf Z}^d$} actions ({W}arwick,
  1993--1994)}, volume 228 of {\em London Math. Soc. Lecture Note Ser.}, pages
  451--473. Cambridge Univ. Press, Cambridge, 1996.

\bibitem{sabitova2022}
M.~Sabitova.
\newblock Generalized ideal classes in application to toroidal solenoids.
\newblock {\em Pacific J. Math.}, 318(1):189--228, 2022.

\bibitem{sabitova2022number}
M.~Sabitova.
\newblock A number theoretic classification of toroidal solenoids.
\newblock {\em arXiv preprint arXiv:2209.14966}, 2022.

\bibitem{shechtman1984metallic}
D.~Shechtman, I.~Blech, D.~Gratias, and J.~W. Cahn.
\newblock Metallic phase with long-range orientational order and no
  translational symmetry.
\newblock {\em Physical review letters}, 53(20):1951, 1984.

\bibitem{solomyakrecognizability}
B.~Solomyak.
\newblock Nonperiodicity implies unique composition for self-similar
  translationally finite tilings.
\newblock {\em Discrete Comput. Geom.}, 20(2):265--279, 1998.

\end{thebibliography}

\end{document}